\newtheorem{theorem}{Theorem}[section]
\newtheorem{definition}{Definition}[section]
\newtheorem{lemma}{Lemma}[section]
\newtheorem{prop}{Proposition}[section]
\newtheorem{coro}{Corollary}[section]
\theoremstyle{definition} 
\newtheorem{assumption}{Assumption}[section]
\theoremstyle{remark} 
\newtheorem{example}{Example}[section]
\DeclareMathOperator{\E}{E}
\DeclareMathOperator{\Cov}{Cov}
\DeclareMathOperator{\F}{F}
\DeclareMathOperator{\p}{P}
\DeclareMathOperator{\NIID}{NIID}
\DeclareMathOperator{\argmax}{argmax}
\begin{document}

\title{Robust Estimation of Change-Point Location}
\author{Carina Gerstenberger$^*$}
\date{}


\maketitle

\begin{abstract}
\footnotesize
We introduce a robust estimator of the location parameter for the change-point in the mean based on the Wilcoxon statistic and establish its consistency for $L_1$ near epoch dependent processes. It is shown that the consistency rate depends on the magnitude of change. A simulation study is performed to evaluate finite sample properties of the Wilcoxon-type estimator in standard cases, as well as under heavy-tailed distributions and disturbances by outliers, and to compare it with a CUSUM-type estimator. It shows that the Wilcoxon-type estimator is equivalent to the CUSUM-type estimator in standard cases, but outperforms the CUSUM-type estimator in presence of heavy tails or outliers in the data.\par 
KEYWORDS: Wilcoxon statistic; change-point estimator; near epoch dependence \Footnotetext{}{Date: January 9, 2017.}\Footnotetext{*}{Fakult\"at f\"ur Mathematik, Ruhr-Universit\"at Bo\-chum, 
44780 Bochum, Germany}
\end{abstract}

\section{Introduction}\label{Introduction}

In many applications it can not be assumed that observed data have a constant mean over time. Therefore, extensive research has been done in testing for change-points in the mean, see e.g. \citet{Giraitis.1996}, \citet{Csorgo.1997}, \citet{Ling.2007}, and others. A number of papers deal with the problem of estimation of the change-point location. \citet{Bai.1993} estimates the unknown location point for the break in the mean of a linear process by the method of least squares. \citet{Antoch.1995} and \citet{Csorgo.1997} established the consistency rates for CUSUM-type estimators for independent data, while \citet{Csorgo.1997} considered weakly dependent variables. \citet{Horvath.1997} established consistency of CUSUM-type estimators of location of change-point for strongly dependent variables. \citet{Kokoszka.1998,Kokoszka.2000} discussed CUSUM-type estimators for dependent observations and ARCH models. In spite of numerous studies on testing for changes and estimating for change-points, however, just a few procedures are robust against outliers in the data. In a recent work \citet{Dehling.2015} address the robustness problem of testing for change-points by introducing a Wilcoxon-type test which is applicable under short-range dependence (see also \citet{Dehling.2013b} for the long-range dependence case).\par 
In this paper we suggest a robust Wilcoxon-type estimator for the change-point location based on the idea of \citet{Dehling.2015} and applicable for $L_1$ near epoch dependent processes. The Wilcoxon change-point test statistic is defined as
\begin{equation}\label{Wilcoxon-TS}
W_{n}(k) = \sum_{i=1}^{k}\sum_{j=k+1}^{n}(1_{\{X_i\leq X_j\}}-1/2)
\end{equation}
and counts how often an observation of the second part of the sample, $X_{k+1},\ldots,X_n$, exceeds an observation of the first part, $X_1,\ldots,X_k$. Assuming a change in mean happens at the time $k^*$, the absolute value of $W_{n}(k^*)$ is expected to be large. Hence, the Wilcoxon-type estimator for the location of the change-point, 
\begin{equation}\label{bp_estimator_wilcoxon}
\hat{k} = \min\Big\{k:\max_{1\leq l < n}\Big| W_{n}(l) \Big|=\Big| W_{n}(k) \Big|\Big\},
\end{equation}
can be defined as the smallest k for which the Wilcoxon test statistic $W_n(k)$ attains its maximum. Since the Wilcoxon test statistic is a rank-type statistic, outliers in the observed data can not affect the test statistic significantly. On the contrary, the CUSUM-type test statistic
\begin{equation*}
C_{n}(k) = \frac{1}{k}\sum_{i=1}^{k} X_i - \frac{1}{n}\sum_{i=1}^{n} X_i,
\end{equation*}
which compares the difference of the sample mean of the first $k$ observations and the sample mean over all observations, can be significantly disturbed by a single outlier.\par 
The outline of the paper is as follows. In Section \ref{Main Results} we discuss the consistency and the rates of the estimator $\hat{k}$ in (\ref{bp_estimator_wilcoxon}). Section \ref{Simulation Study} contains the simulation study. Section \ref{Useful Properties} provides useful properties of the Wilcoxon test statistic and the proof of the main result. Sections \ref{Auxiliary Results} and \ref{result_literature} contain some auxiliary results.

\section{Definitions, assumptions and main results}\label{Main Results}

Assume the random variables $X_1,\ldots,X_n$ follow the change-point model 
\begin{equation}\label{model_bp}
X_i = \begin{cases}
Y_i + \mu, & 1\leq i \leq k^*\\
Y_i + \mu + \Delta_n, & k^* < i \leq n,
\end{cases}
\end{equation}
where the process $(Y_j)$ is a stationary zero mean short-range dependent process, $k^*$ denotes the location of the unknown change-point and $\mu$ and $\mu + \Delta_n$ are the unknown means. We assume that $Y_1$ has a continuous distribution function $\F$ with bounded second derivative and that the distribution functions of $Y_1-Y_k$, $k\geq 1$ satisfy
\begin{equation}\label{Bed_gem_Verteilung}
\p(x\leq Y_1-Y_k\leq y) \leq C|y-x|,
\end{equation}
for all $0\leq x\leq y \leq 1$, where $C$ does not depend on $k$ and $x,y$. We allow the magnitude of the change $\Delta_n$ vary with the sample size $n$.

\begin{assumption}\label{assumption_change}
\begin{enumerate}[a)]
\item The change-point $k^*=[n\theta]$, $0<\theta<1,$ is proportional to the sample size $n$. 
\item The magnitude of change $\Delta_n$ depends on the sample size $n$, and is such that
\begin{equation} \label{assumption_size_change_infty}
\Delta_n \rightarrow 0, \qquad n\Delta_n^2 \rightarrow \infty, \qquad n\rightarrow \infty.
\end{equation}
\end{enumerate}
\end{assumption}
\par \bigskip
Next we specify the assumptions on the underlying process $(Y_j)$. The following definition introduces the concept of an absolutely regular process which is also known as $\beta$-mixing.

\begin{definition}
A stationary process $\left(Z_j\right)_{j\in\mathbb{Z}}$ is called  absolutely regular if
\[
\beta_k = \sup_{n\geq 1}\E \sup_{A\in \mathcal{F}_1^n} \left| \p\left(A|\mathcal{F}_{n+k}^{\infty}\right)-\p\left(A\right)\right| \rightarrow 0
\]
as $k\rightarrow\infty$, where $\mathcal{F}_a^b$ is the $\sigma$-field generated by random variables $Z_a,\ldots,Z_b$.
\end{definition}

The coefficients $\beta_k$ are called mixing coefficients. For further information about mixing conditions see \citet{Bradley.2002}. The concept of absolute regularity covers a wide range of processes. However, important processes like linear processes or AR processes might not be absolutely regular. To overcome this restriction, in this paper we discuss functionals of absolutely regular processes, i.e. instead of focusing on the absolute regular process $\left(Z_j\right)$ itself, we consider process $\left(Y_j\right)$ with $Y_j=f(Z_j,Z_{j-1},Z_{j-2},\ldots)$, where $f:\mathbb{R}^{\mathbb{Z}}\rightarrow\mathbb{R}$ is a measurable function. The following near epoch dependence condition ensures that $Y_j$ mainly depends on the near past of $\left(Z_j\right)$.

\begin{definition}\thlabel{NED}
We say that stationary process $\left(Y_j\right)$ is $L_1$ near epoch dependent ($L_1$ NED) on a stationary process $\left(Z_j\right)$ with approximation constants $a_k$, $k\geq 0$, if conditional expectations $\E(Y_1|\mathcal{G}_{-k}^k)$, where $\mathcal{G}_{-k}^{k}$ is the $\sigma$-field generated by $Z_{-k},\ldots,Z_{k}$, have property
\[
\E\Big| Y_1 - \E(Y_1|\mathcal{G}_{-k}^k)\Big| \leq a_k, \qquad k=0,1,2,\ldots
\]
and $a_k\rightarrow0$, $k\rightarrow\infty$.
\end{definition}

Note that $L_1$ NED is a special case of more general $L_r$ near epoch dependence, where approximation constants are defined using $L_r$ norm: $\E\big| Y_1 - \E(Y_1|\mathcal{G}_{-k}^k)\big|^r \leq a_k$, $r\geq 1$. $L_r$ NED processes are also called $r$-approximating functionals. In testing problems considered in this paper we allow for heavy-tailed distributions. Hence, we deal with $L_1$ near epoch dependence, which assumes existence of only the first moment $\E|Y_1|$. The concept of near epoch dependence is applicable e.g. to GARCH(1,1) processes, see \citet{Hansen.1991}, and linear processes, see \thref{example_linear_processes} below. \citet{borovkova.2001} provide additional examples and information about properties of $L_r$ near epoch dependent process.\par 

\begin{example}\thlabel{example_linear_processes}
Let $\left(Y_j\right)$ be a linear process, i.e. $Y_t=\sum_{j=0}^{\infty}\psi_j Z_{t-j}$, where $\left(Z_j\right)$ is white-noise process and the coefficients $\psi_j$, $j\geq 0$, are absolutely summable. Since $\left(Z_j\right)$ is stationary and $Z_{t-j}$ is $\mathcal{G}_{-k}^k$ measurable for $|t-j|\leq k$, we get
\begin{multline*}
\E|Y_t-\E(Y_t|\mathcal{G}_{-k}^k)| \leq  \sum_{j=k+1}^{\infty}|\psi_j|\E|Z_{t-j}-\E(Z_{t-j}|\mathcal{G}_{-k}^k)|\\
\leq 2\sum_{j=k+1}^{\infty}|\psi_j|\E|Z_{t-j}| = 2\E|Z_1|\sum_{j=k+1}^{\infty}|\psi_j|.
\end{multline*}
Thus, the linear process $\left(Y_j\right)$ is $L_1$ NED on $\left(Z_j\right)$ with approximation constants $a_k=2\E|Z_1|\sum_{j=k+1}^{\infty}|\psi_j|$.
\end{example}
\par \bigskip

We will assume that the process $(Y_j)$ in (\ref{model_bp}) is $L_1$ near epoch dependent on some absolutely regular process $\left(Z_j\right)$. In addition, we impose the following condition on the decay of the mixing coefficients $\beta_k$ and approximation constants $a_k$:
\begin{equation}\label{condition_appr.const_regu.coeff}
\sum_{k=1}^{\infty}{k^2\left(\beta_k+\sqrt{a_k}\right)}< \infty.
\end{equation}

The next theorem states the rates of consistency of the Wilcoxon-type change-point estimator $\hat{k}$ given in (\ref{bp_estimator_wilcoxon}) and the estimator $\hat{\theta}=\hat{k}/n$ of the true location parameter $\theta$ for the change-point $k^*=[n\theta]$.

\begin{theorem}\thlabel{consistence_of_bp_estimator}
Let $X_1,\ldots, X_n$ follow the change-point model (\ref{model_bp}) and Assumption \ref{assumption_change} be satisfied. Assume that $\left(Y_j\right)$ is a stationary zero mean $L_1$ near epoch dependent process on some absolutely regular process $\left(Z_j\right)$ and (\ref{condition_appr.const_regu.coeff}) holds. Then,
\begin{equation}\label{rate_consistency_k_hat}
\Big| \hat{k}-k^*\Big| = O_P\Big(\frac{1}{\Delta_n^2}\Big),
\end{equation}
and
\begin{equation}\label{rate_consistency_theta}
\Big|\hat{\theta}-\theta\Big| =O_P\Big(\frac{1}{n\Delta_n^2}\Big).
\end{equation}
\end{theorem}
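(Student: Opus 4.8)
The estimate (\ref{rate_consistency_theta}) will follow at once from (\ref{rate_consistency_k_hat}), since $\hat\theta-\theta=(\hat k-k^*)/n+O(1/n)$ and $1/n=o\!\left(1/(n\Delta_n^2)\right)$ by (\ref{assumption_size_change_infty}); so the whole effort goes into (\ref{rate_consistency_k_hat}). The plan is to split $W_n(k)=\mu_n(k)+V_n(k)$ with $\mu_n(k)=\E W_n(k)$ and to pit the \emph{linear} decay of the deterministic signal $\mu_n$ away from $k^*$ against the size of the \emph{fluctuation} $V_n$. Since $\hat k$ maximises $|W_n|$, on the event $\{|\hat k-k^*|>m_0\}$ there is a $k$ with $|k-k^*|>m_0$ and $|W_n(k)|\ge|W_n(k^*)|$; I will show that, for $m_0\asymp 1/\Delta_n^2$, sustaining this forces an atypically large increment of $V_n$, and bound its probability by a maximal inequality. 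Throughout write $h(x,y)=1_{\{x\le y\}}-1/2$.

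\emph{Step 1: the signal.} Using $1_{\{X_i\le X_j\}}=1_{\{Y_i-Y_j\le\Delta_n\}}$ when $i\le k^*<j$ and $=1_{\{Y_i-Y_j\le 0\}}$ otherwise, I would evaluate $\mu_n(k)$ separately for $k\le k^*$ and $k\ge k^*$. For a lag $\ell$ put $\varphi_\ell(t)=\p(Y_1-Y_{1+\ell}\le t)$; the boundedness of $\F''$ and (\ref{Bed_gem_Verteilung}) give $\varphi_\ell(\Delta_n)-\varphi_\ell(0)=\Delta_n\,\varphi_\ell'(0)+O(\Delta_n^2)$, where $\varphi_\ell'(0)$ is the density of $Y_1-Y_{1+\ell}$ at $0$, bounded and tending as $\ell\to\infty$ to $D:=\int(\F')^2>0$. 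Almost all index pairs entering $\mu_n$ have large lag, so each shifted pair contributes $D\Delta_n$ up to lower order, while the ``no--shift'' remainders $\varphi_\ell(0)-1/2$ are summable in $\ell$ by (\ref{condition_appr.const_regu.coeff}) and hence negligible. This produces the tent shape $\mu_n(k)\approx D\Delta_n\,k(n-k^*)$ for $k\le k^*$ and $\approx D\Delta_n\,k^*(n-k)$ for $k\ge k^*$; with $k^*=[n\theta]$, $0<\theta<1$, it furnishes a constant $c_1>0$ such that
\[
\mu_n(k^*)-\mu_n(k)\ \ge\ c_1\,n\Delta_n\,|k-k^*|\qquad\text{for all }k,\ n\text{ large.}
\]

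\emph{Step 2: the fluctuation and a maximal inequality.} The one--step increment telescopes to
\[
W_n(k+1)-W_n(k)=\sum_{j=k+2}^{n}h(X_{k+1},X_j)-\sum_{i=1}^{k}h(X_i,X_{k+1}),
\]
whose Hoeffding projection equals $(n-1)\!\left(1/2-\F(X_{k+1})\right)$ up to a degenerate remainder. Hence, to leading order, $V_n(k^*+j)-V_n(k^*)$ is the partial sum of a weakly dependent stationary sequence with per--term variance of order $n^2$, consistent with $\Var\!\left(V_n(k)-V_n(k^*)\right)\le C\,n^2|k-k^*|$. Applying a maximal inequality to these partial sums --- it is here that the summable covariances guaranteed by (\ref{condition_appr.const_regu.coeff}) are used, and it is essential that no logarithmic factor be lost --- I expect $\E\big[\max_{0\le j\le M}(V_n(k^*+j)-V_n(k^*))^2\big]\le C\,n^2 M$, and symmetrically on the left. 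A dyadic decomposition over $2^r\le|k-k^*|\le 2^{r+1}$ then gives
\[
\p\Big(\max_{|k-k^*|>C_0/\Delta_n^2}\big(V_n(k)-V_n(k^*)\big)\ge c_1 n\Delta_n|k-k^*|\Big)\ \le\ \sum_{2^r>C_0/\Delta_n^2}\frac{C'}{\Delta_n^2\,2^{r}},
\]
and the right--hand side is $O(1/C_0)$, uniformly in $n$.

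\emph{Step 3: conclusion, and the main obstacle.} The peak is genuinely large, $\mu_n(k^*)\asymp n^2\Delta_n$, whereas $V_n(k^*)=O_P(n^{3/2})$ and $n^{1/2}\Delta_n=\sqrt{n\Delta_n^2}\to\infty$; thus $W_n(k^*)>0$ with probability tending to one and is the near--maximum. On $\{|\hat k-k^*|>C_0/\Delta_n^2\}$ this forces, at the relevant $k$, either $V_n(k)-V_n(k^*)\ge\mu_n(k^*)-\mu_n(k)\ge c_1 n\Delta_n|k-k^*|$ (when $W_n(k)>0$) or $-V_n(k)-V_n(k^*)\gtrsim n^2\Delta_n$ (when $W_n(k)<0$); the first has vanishing probability by Step 2 and the second by the peak--dominance just noted. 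Letting $C_0\to\infty$ yields (\ref{rate_consistency_k_hat}). I expect the genuine difficulty to be Step 2: proving the variance bound and, above all, a \emph{log--free} maximal inequality for the projected partial sums of an $L_1$ near epoch dependent functional of an absolutely regular process, while controlling the degenerate remainder uniformly under the sole assumption of a first moment. The kernel $h$ is bounded, so moments are not the issue; the trouble is that $h(X_i,X_j)$ couples two time points, so the covariance sums are two--dimensional, and it is exactly the $k^2$--weighted decay in (\ref{condition_appr.const_regu.coeff}) that keeps them growing only linearly in the shorter block. I would isolate the required covariance estimates as the auxiliary lemmas of Sections \ref{Useful Properties} and \ref{Auxiliary Results}.
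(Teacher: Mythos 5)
Your proposal is sound and, at bottom, runs on the same analytic ingredients as the paper (Hoeffding projections of the kernels, log-free maximal inequalities for the projected partial sums, second-moment control of the degenerate remainders), but its architecture is genuinely different, so a comparison is worthwhile. The paper never centers at $\E W_n(k)$: it uses the decomposition (\ref{decompose_Wn}), $W_n(k)=U_n(k)+U_n(k,k^*)$, approximates the shift term by $k(n-k^*)\Theta_{\Delta_n}$ with $\Theta_{\Delta_n}$ computed from independent copies (\thref{Lemma_abschaetzung_bp}), and absorbs the pure-noise term into the sup bound $\max_k|U_n(k)|=O_P(n^{3/2})$ of \thref{Theorem_DFGW}; this sidesteps your Step 1 bias terms $\p(Y_i\le Y_j)-1/2$, which in your route need a separate covariance-summability lemma (provable from (\ref{condition_appr.const_regu.coeff}) via the $1$-continuity machinery, but extra work). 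Structurally, the paper is two-stage: first weak consistency $|\hat k-k^*|=o_P(k^*)$ by comparing suprema at scale $\epsilon k^*$, then the rate restricted to $|k-k^*|\le\epsilon k^*$ via the squared-statistic identity (\ref{zerlegung_v_k}) for $V_k=W_n^2(k)-W_n^2(k^*)$, where the neighbourhood restriction yields the lower bound (\ref{abschaetzung_2}) on $W_n(k^*)/(n(k^*-k))$ and squaring removes the sign discussion you need in Step 3; you instead run a single-stage dyadic chaining over all $|k-k^*|\in[C_0/\Delta_n^2,\,n]$ plus a peak-dominance argument for negative $W_n(k)$, which dispenses with both the preliminary consistency step and the squaring trick. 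Your block-wise Chebyshev sum $\sum_{2^r>C_0/\Delta_n^2}C'/(\Delta_n^2 2^r)=O(1/C_0)$ plays exactly the role of the paper's weighted H\'ajek--R\'enyi inequalities (\thref{hajek_renyi_inequality}, \thref{hajek_renyi_g}, \thref{Lemma_abschaetzung_gewichtetes_maximum}), which are themselves proved by a blocking argument imported from \thref{theorem_KL}. You also correctly locate the crux, the log-free maximal inequality: the paper secures it for the projection sums from the fourth-moment bound of \thref{lemma_2.18} combined with \thref{Billingsley} (cf.\ \thref{lemma_abschaetzung_h}), and for the degenerate part from \thref{Lemma_DFGW}; note that a logarithmic loss would be fatal only for the projection part (it would turn your bound into $\log^2(1/\Delta_n^2)/C_0$, no longer uniform in $n$), while the degenerate part has a spare factor of $n$ to absorb it. In short, the paper's route buys modularity --- consistency falls out of \thref{Theorem_DFGW} directly, and all dependence issues are packaged in the $\Theta_{\Delta_n}$-centering --- whereas yours buys a one-pass argument in the classical CUSUM-rate style, at the cost of establishing the tent-shape lower bound globally and the maximal inequality on every dyadic block.
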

\par \bigskip
The rate of consistency of $\hat{\theta}$ in (\ref{rate_consistency_theta}) is given by $n\Delta_n^2$. The assumption $n\Delta_n^2\rightarrow\infty$ in (\ref{assumption_size_change_infty}) implies $\hat{k}-k^*=o_P(k^*)$ and yields consistency of the estimator: $\hat{\theta}\rightarrow_p\theta$. In particular, for $\Delta_n\geq n^{-1/2+\epsilon}$, $\epsilon>0$, the rate of consistency in (\ref{rate_consistency_theta}) is $n^{2\epsilon}$: $\big|\hat{\theta}-\theta\big|= O_P\left(n^{-2\epsilon}\right)$.\par 
The same consistency rate $n^{\epsilon}$ for the CUSUM-type change-point location estimator $\tilde{\theta}_C = \tilde{k}_C/n$, given by
\begin{equation}\label{bp_estimator_cusum}
\tilde{k}_C  = \min\bigg\{ k: \max_{1\leq i \leq n}\bigg| \sum_{j=1}^{i}X_j - \frac{i}{n}\sum_{j=1}^{n}X_j \bigg| =  \bigg| \sum_{j=1}^{k}X_j - \frac{k}{n}\sum_{j=1}^{n}X_j \bigg|  \bigg\},
\end{equation}
was established by \citet{Antoch.1995} for independent data and by \citet{Csorgo.1997} for weakly dependent data.

\section{Simulation results}\label{Simulation Study}

In this simulation study we compare the finite sample properties of the Wilcoxon-type change-point estimator $\hat{k}$, given in (\ref{bp_estimator_wilcoxon}), with the CUSUM-type estimator $\tilde{k}_C$, given in (\ref{bp_estimator_cusum}).
We refer to the Wilcoxon-type change-point estimator by W and to the CUSUM-type estimator by C.\par 

We generate the sample of random variables $X_1,\ldots,X_n$ using the model
\begin{equation}\label{model_bp_simu}
X_i = \begin{cases}
Y_i + \mu &, 1\leq i \leq k^*\\
Y_i + \mu + \Delta &, k^* < i \leq n
\end{cases}
\end{equation}
where $Y_i = \rho Y_{i-1} + \epsilon_i$ is an AR(1) process. In our simulations we consider $\rho = 0.4$, which yields a moderate positive autocorrelation in $X_i$. The innovations $\epsilon_i$ are generated from a standard normal distribution and a Student's t-distribution with 1 degree of freedom. We consider the time of change $k^*=[n\theta]$, $\theta = 0.25,0.5,0.75$, the magnitude of change $\Delta = 0.5,1,2$ and the sample sizes $n=50,100,200,500$. All simulation results are based on 10.000 replications. Note that we report estimation results not for $\hat{k}$ and $\tilde{k}_C$, but $\hat{\theta}=\hat{k}/n$ and $\tilde{\theta}_C=\tilde{k}_C/n$.\par 
Figure \ref{figure.normal.hist} contains the histogram based on the sample of 10.000 values of Wilcoxon-type estimator $\hat{\theta}$ and the CUSUM-type estimator $\tilde{\theta}_C$, for the model (\ref{model_bp_simu}) with $\Delta=1$, $\theta=0.5$, $n=50$ and independent standard normal innovations $\epsilon_i$. Both estimation methods give very similar histograms.\par 
Table \ref{table.normal} reports the sample mean and the sample standard deviation based on 10.000 values of $\hat{\theta}$ and $\tilde{\theta}_C$ for other choices of parameters $\Delta$ and $\theta$. It shows that performance of both estimators improves when the sample size $n$ and the magnitude of change $\Delta$ are rising, and when the change happens in the middle of the sample. In general, Wilcoxon-type estimator performs in all experiments as good as the CUSUM-type estimator.\par 
Figure \ref{figure.t1.hist} shows the histogram based on 10.000 values of $\hat{\theta}$ and $\tilde{\theta}_C$, for the model (\ref{model_bp_simu}) with $t_1$-distributed heavy-tailed iid innovations $\epsilon_i$, $\Delta=1$, $\theta=0.5$ and $n=500$. For heavy-tailed innovations $\epsilon_i$, both estimators deviate from the true value of the parameter $\theta$ more significantly than under normal innovations. Nevertheless, the Wilcoxon-type estimator seems to outperform the CUSUM-type estimator.\par 
Figure \ref{figure.outliers.hist} shows the histogram based on 10.000 values for $\hat{\theta}$ and $\tilde{\theta}_C$ when the data $X_1,\ldots,X_n$ is generated by (\ref{model_bp_simu}) with $\Delta =1$, $\theta=0.5$, $n=200$ and $\epsilon_i \sim \NIID(0,1)$ and contains outliers. The outliers are introduced by multiplying observations $X_{[0.2n]}$, $X_{[0.3n]}$, $X_{[0.6n]}$ and $X_{[0.8n]}$ by the constant $M=50$. The histogram shows that the Wilcoxon-type estimator is rarely affected by the outliers, whereas the CUSUM-type estimator suffers large distortions.\par 
Table \ref{table.normal.t1.outliers} reports the sample mean and the sample standard deviation based on 10.000 values of $\hat{\theta}$ and $\tilde{\theta}_C$ for $\Delta=1$ and $\theta=0.5$ for sample size $n=50,100,200,500$ in the case of the normal, normal with outliers and $t_1$-distributed innovations. Figures \ref{figure.normal.hist}, \ref{figure.t1.hist} and \ref{figure.outliers.hist} presents results for $n=50,200,500$.\par 
In general, we conclude that the Wilcoxon-type change-point location estimator performs equally well as the CUSUM-type change-point estimator in standard situations, but outperforms the CUSUM-type estimator in presence of heavy tails and outliers.

 \begin{figure}
 \begin{subfigure}[c]{0.5\textwidth}
\includegraphics[width=7cm, height=5cm]{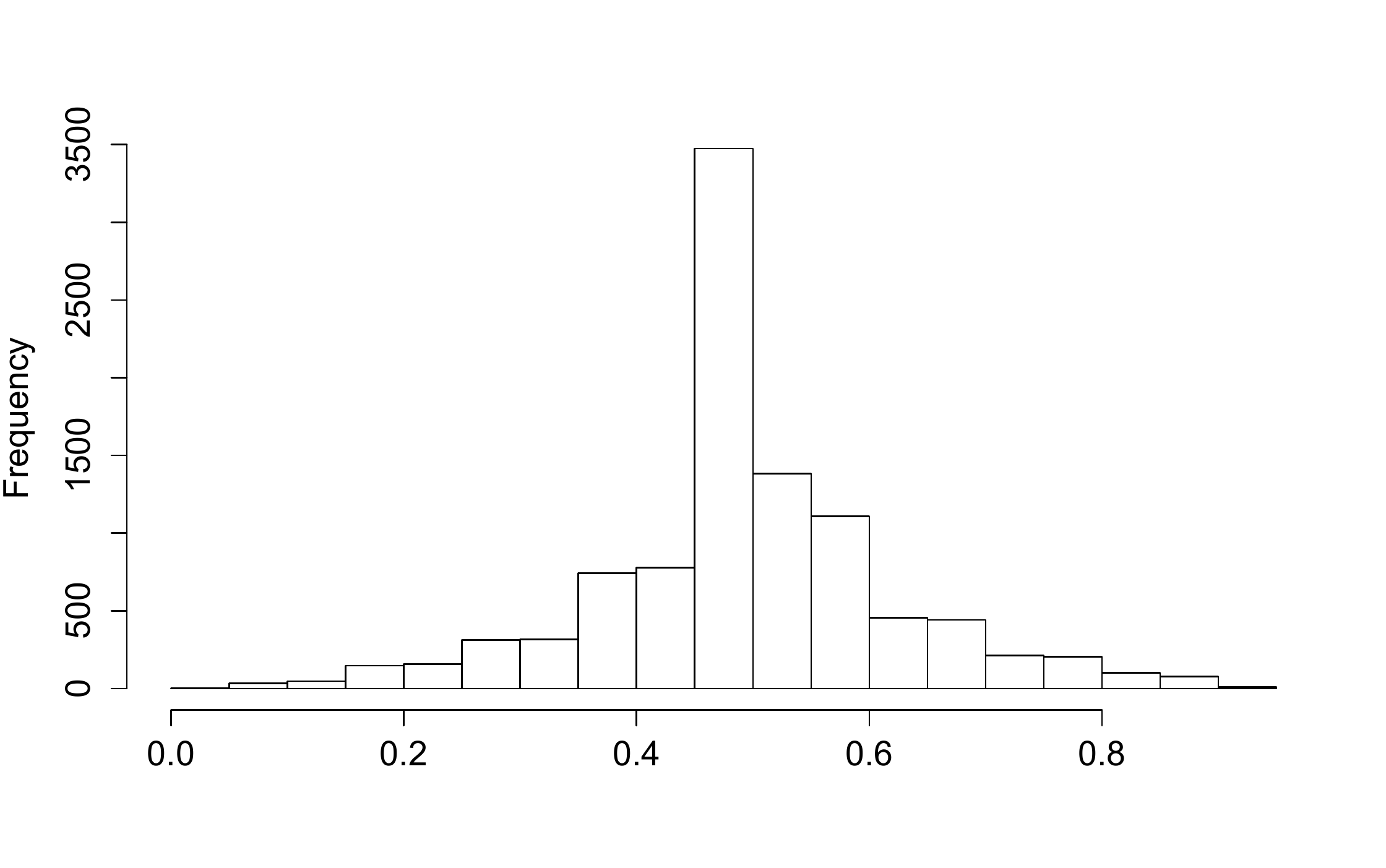}
\subcaption{CUSUM $\tilde{\theta}_C$} 
\end{subfigure}
 \begin{subfigure}[c]{0.5\textwidth}
\includegraphics[width=7cm, height=5cm]{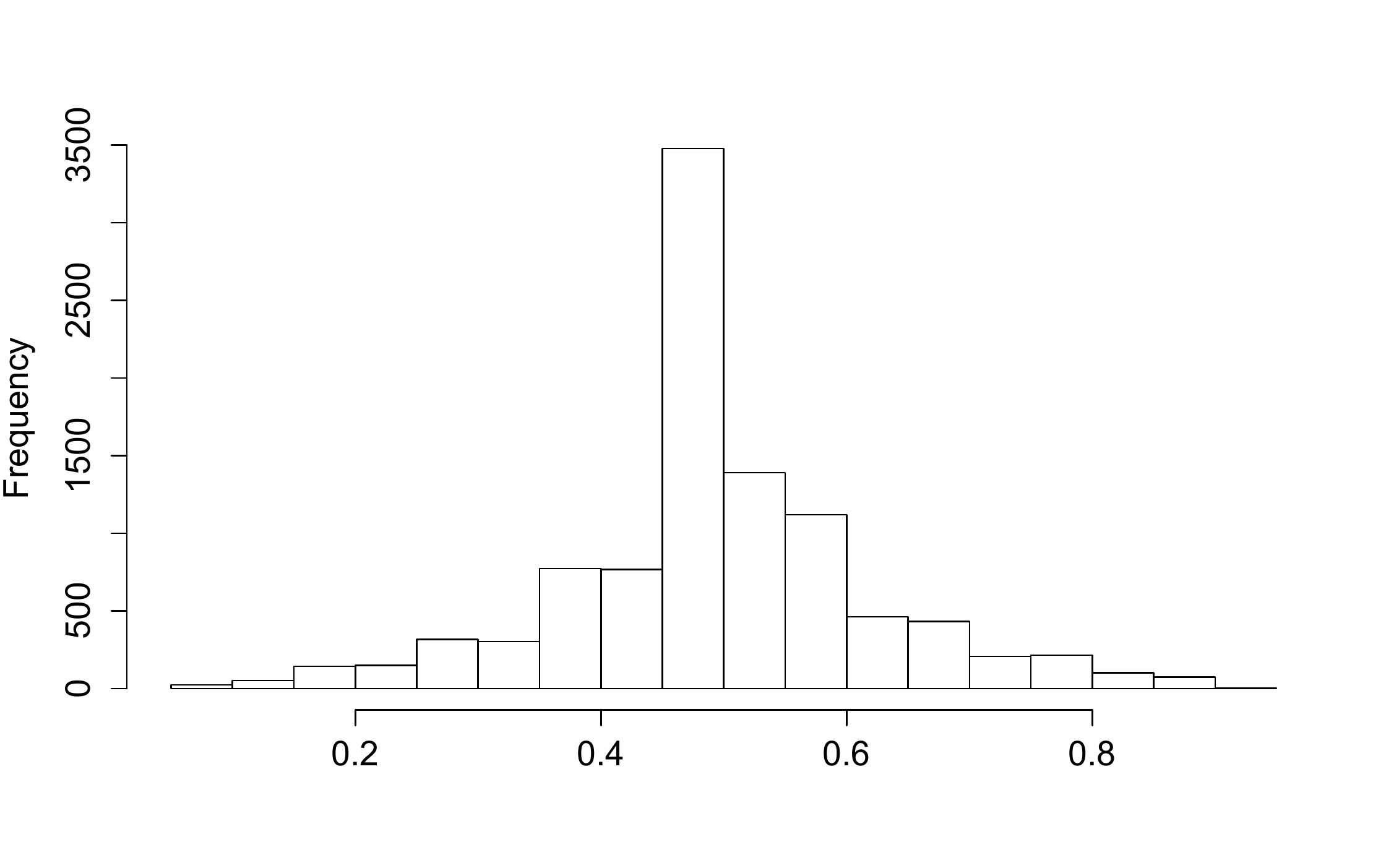}
\subcaption{Wilcoxon $\hat{\theta}$} 
\end{subfigure}
\caption{Histogram based on 10.000 values for the Wilcoxon-type estimator $\hat{\theta}$ and the CUSUM-type estimator $\tilde{\theta}_C$. $X_i$ follows the model (\ref{model_bp_simu}) with $\Delta=1$, $\theta=0.5$, $n=50$ and normal innovations $\epsilon_i \sim \NIID(0,1)$.}\label{figure.normal.hist}
\end{figure}

\begin{table}
\begin{tabular}{c|c|c|r|r|r|r|r|r|r|r}
\multicolumn{3}{c|}{} & \multicolumn{2}{|c|}{n=50} & \multicolumn{2}{|c|}{n=100} & \multicolumn{2}{|c|}{n=200}  & \multicolumn{2}{|c}{n=500}\\\hline
 $\Delta$ & $\theta$ &  & \multicolumn{1}{|c|}{C} & \multicolumn{1}{|c|}{W}& \multicolumn{1}{|c|}{C} & \multicolumn{1}{|c}{W} & \multicolumn{1}{|c|}{C} & \multicolumn{1}{|c}{W}& \multicolumn{1}{|c|}{C} & \multicolumn{1}{|c}{W}\\\hline
0.5	& 0.25 	& mean	&0.46 &0.46	&0.43 &0.44	&0.40 &0.40 &0.34 &0.34\\
	& 		& sd	&0.21 &0.21	&0.20 &0.20	&0.18 &0.18 &0.13 &0.13\\
	& 0.50 	& mean 	&0.50 &0.50	&0.50 &0.50	&0.50 &0.50 &0.50 &0.50\\
	& 		& sd	&0.18 &0.18	&0.16 &0.16	&0.13 &0.13 &0.08 &0.08\\
 	& 0.75 	& mean 	&0.54 &0.54	&0.57 &0.56	&0.61 &0.61 &0.66 &0.66\\
	& 		& sd	&0.20 &0.20	&0.20 &0.20	&0.18 &0.18 &0.13 &0.13\\
1 	& 0.25 	& mean	&0.39 &0.39	&0.35 &0.35	&0.31 &0.31 &0.28 &0.28\\
	& 		& sd	&0.18 &0.18	&0.14 &0.14	&0.10 &0.10 &0.05 &0.06\\
	& 0.50 	& mean 	&0.50 &0.50	&0.50 &0.50	&0.50 &0.50 &0.50 &0.50\\
	& 		& sd	&0.12 &0.12	&0.09 &0.09	&0.05 &0.05 &0.02 &0.02\\
  	& 0.75 	& mean 	&0.61 &0.60	&0.65 &0.65	&0.69 &0.69 &0.72 &0.72\\
	& 		& sd	&0.17 &0.17	&0.15 &0.15	&0.10 &0.10 &0.05 &0.06\\
2 	& 0.25 	& mean	&0.30 &0.31	&0.28 &0.29	&0.27 &0.28 &0.26 &0.26\\
	& 		& sd	&0.10 &0.10	&0.06 &0.07	&0.04 &0.04 &0.02 &0.02\\
	& 0.50 	& mean 	&0.50 &0.50	&0.50 &0.50	&0.50 &0.50 &0.50 &0.50\\
	& 		& sd	&0.05 &0.05	&0.03 &0.03	&0.02 &0.01 &0.01 &0.01\\
  	& 0.75 	& mean 	&0.69 &0.68	&0.72 &0.71	&0.73 &0.73 &0.74 &0.74\\
	& 		& sd	&0.09 &0.10	&0.06 &0.07	&0.04 &0.04 &0.02 &0.02\\
\end{tabular}
\caption{Sample mean and the sample standard deviation based on 10.000 values of $\hat{\theta}$ and $\tilde{\theta}_C$. $X_i$ follows the model (\ref{model_bp_simu}) with normal innovations $\epsilon_i \sim \NIID(0,1)$.}\label{table.normal}
\end{table} 

 \begin{figure}
 \begin{subfigure}[c]{0.5\textwidth}
\includegraphics[width=7cm, height=5cm]{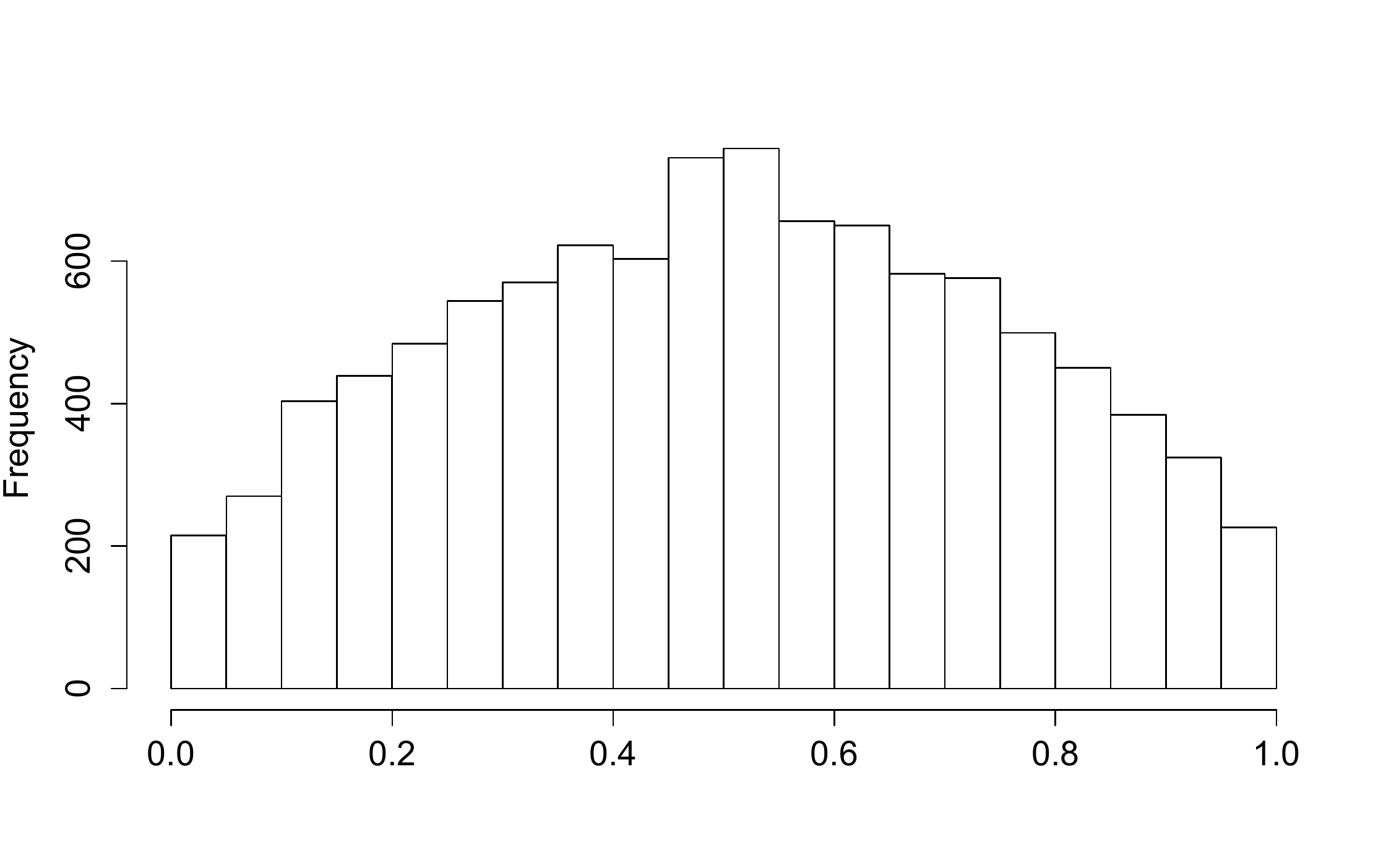}
\subcaption{CUSUM $\tilde{\theta}_C$} 
\end{subfigure}
 \begin{subfigure}[c]{0.5\textwidth}
\includegraphics[width=7cm, height=5cm]{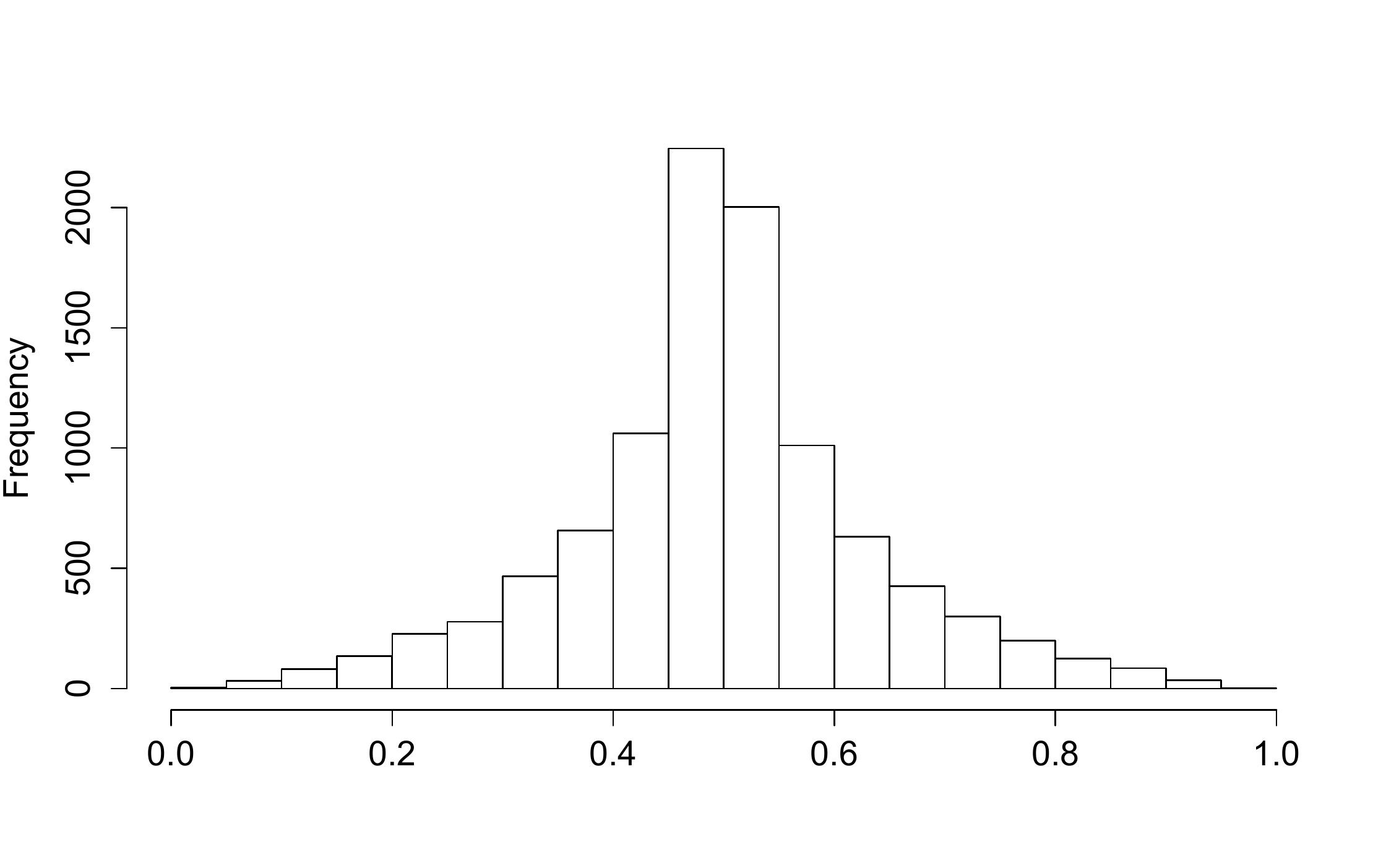}
\subcaption{Wilcoxon $\hat{\theta}$} 
\end{subfigure}
\caption{Histogram of CUSUM-type estimator $\tilde{\theta}_C$ and Wilcoxon-type estimator $\hat{\theta}$ based on 10.000 values of $\tilde{\theta}_C$ and $\hat{\theta}$ for the model (\ref{model_bp_simu}) with iid $t_1$-distributed innovations, $\Delta=1$, $\theta=0.5$ and $n=500$.}\label{figure.t1.hist}
\end{figure}

\begin{figure}
 \begin{subfigure}[c]{0.5\textwidth}
\includegraphics[width=7cm, height=5cm]{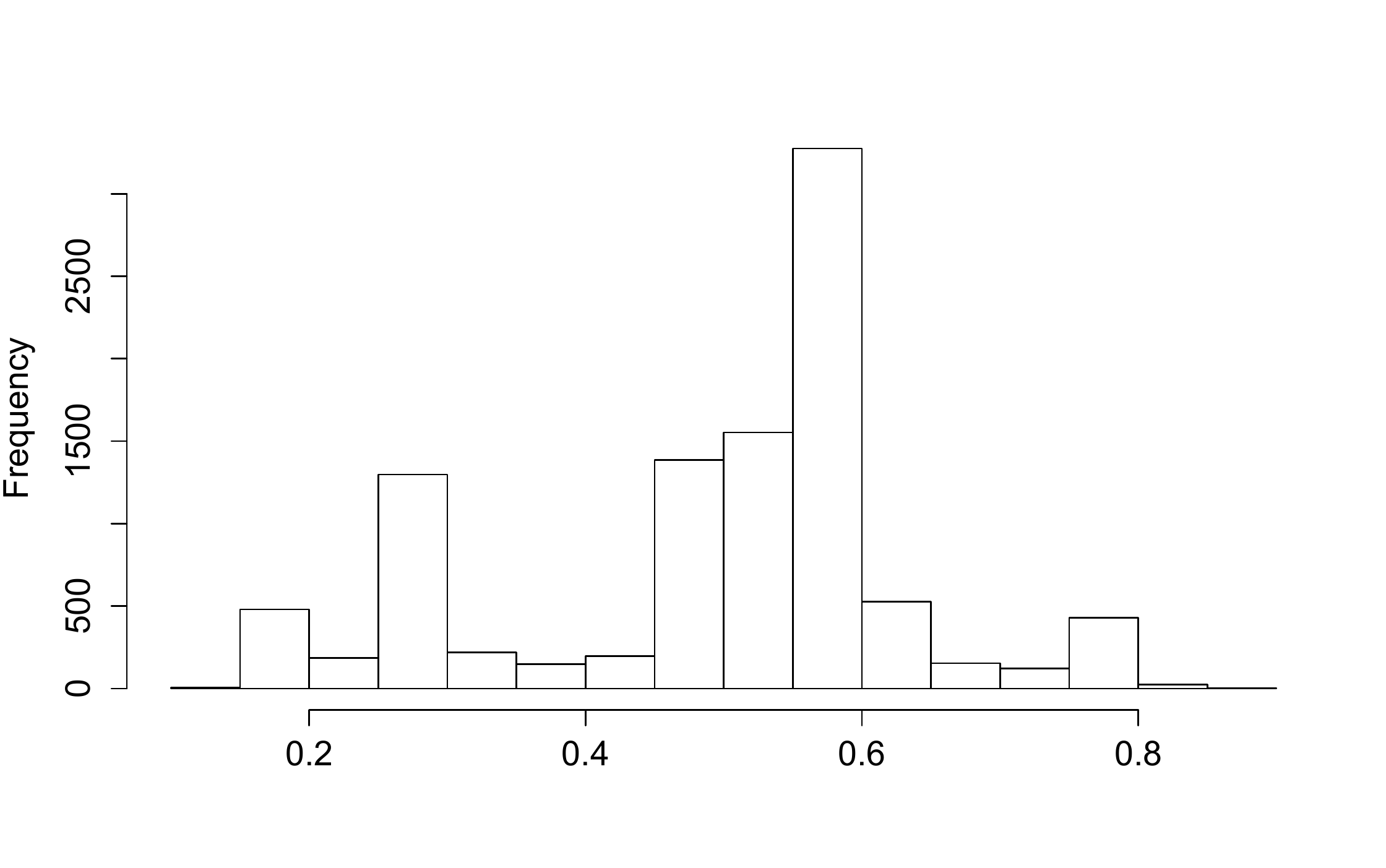}
\subcaption{CUSUM $\tilde{\theta}_C$} 
\end{subfigure}
 \begin{subfigure}[c]{0.5\textwidth}
\includegraphics[width=7cm, height=5cm]{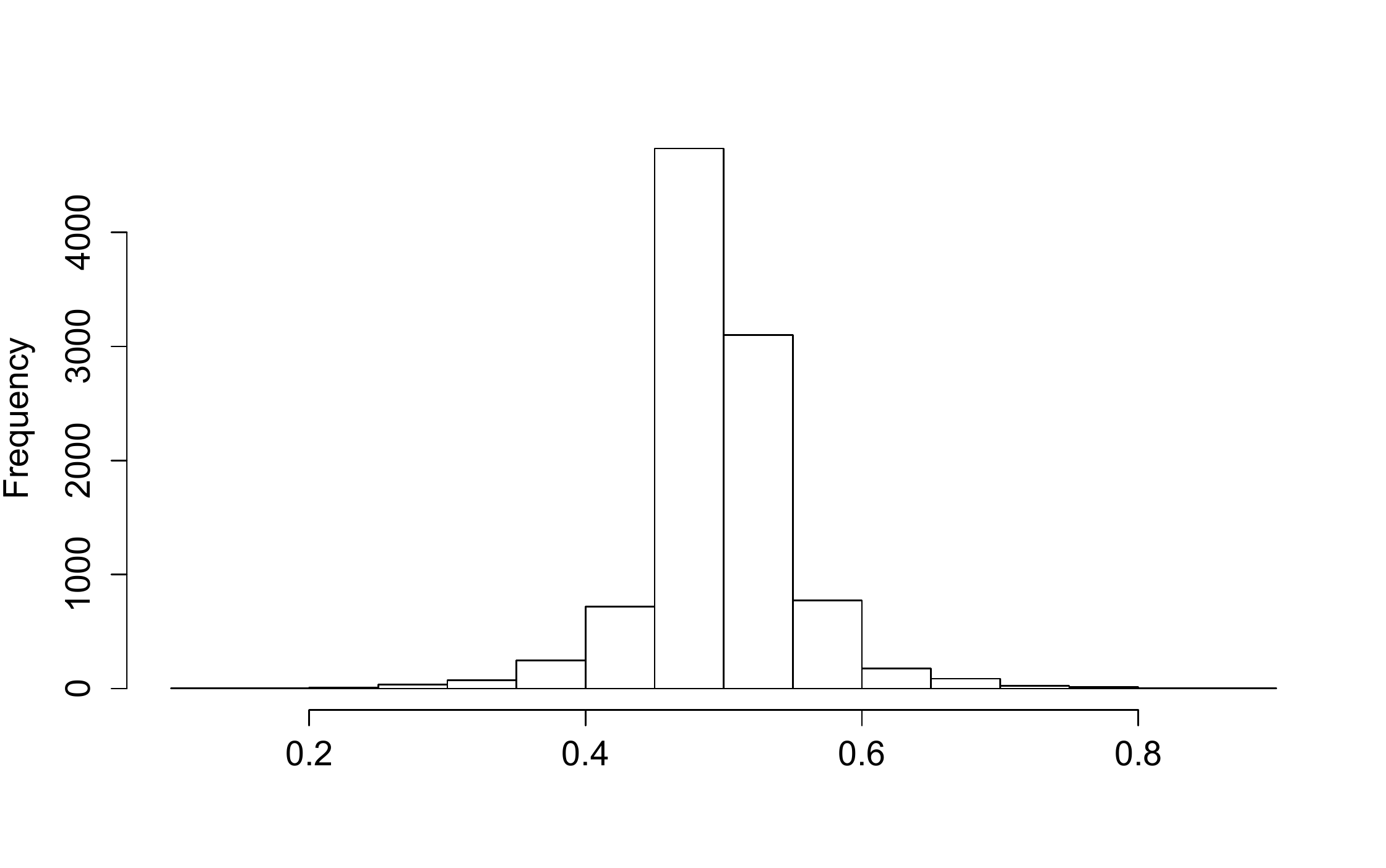}
\subcaption{Wilcoxon $\hat{\theta}$} 
\end{subfigure}
\caption{Histogram based on 10.000 values of $\tilde{\theta}_C$ and $\hat{\theta}$ for the model (\ref{model_bp_simu}) with normal innovations $\epsilon_i\sim\NIID(0,1)$, $\Delta=1$, $\theta=0.5$, $n=200$ and outliers.}\label{figure.outliers.hist}
\end{figure}

\begin{table}
\begin{tabular}{l|c|r|r|r|r|r|r|r|r}
\multicolumn{2}{c|}{} & \multicolumn{2}{|c|}{n=50} & \multicolumn{2}{|c|}{n=100} & \multicolumn{2}{|c|}{n=200}  & \multicolumn{2}{|c}{n=500}\\\hline
Innovations &  & \multicolumn{1}{|c|}{C} & \multicolumn{1}{|c|}{W}& \multicolumn{1}{|c|}{C} & \multicolumn{1}{|c}{W} & \multicolumn{1}{|c|}{C} & \multicolumn{1}{|c}{W}& \multicolumn{1}{|c|}{C} & \multicolumn{1}{|c}{W}\\\hline
normal  		& mean 	&0.50 &0.50	&0.50 &0.50	&0.50 &0.50 &0.50 &0.50\\
				& sd	&0.12 &0.12	&0.09 &0.09	&0.05 &0.05 &0.02 &0.02\\
$t_1$			& mean 	&0.52 &0.50	&0.51 &0.50	&0.51 &0.50	&0.50 &0.50\\
				& sd	&0.23 &0.20	&0.24 &0.19	&0.24 &0.17	&0.25 &0.14\\
normal with		& mean 	&0.50 &0.49	&0.50 &0.50	&0.50 &0.50	&0.51 &0.50\\
outliers		& sd	&0.17 &0.13	&0.16 &0.09	&0.15 &0.06	&0.09 &0.02\\
		
\end{tabular}
\caption{Sample mean and the sample standard deviation of $\hat{\theta}$ and $\tilde{\theta}_C$ based on 10.000 replications for the normal, normal with outliers and $t_1$-distributed innovations, $\Delta=1$ and $\theta=0.5$.}\label{table.normal.t1.outliers}
\end{table}   

\FloatBarrier 

\section{Useful properties of the Wilcoxon test statistic and proof of \thref{consistence_of_bp_estimator}}\label{Useful Properties}

This section presents some useful properties of the Wilcoxon test statistic and the proof of \thref{consistence_of_bp_estimator}.\par 
Throughout the paper without loss of generality, we assume that $\mu = 0$ and $\Delta_n>0$. We let $C$ denote a generic non-negative constant, which may vary from time to time. 
The notation $a_n \sim b_n$ means that two sequences $a_n$ and $b_n$ of real numbers have property $a_n/b_n\rightarrow c$, as $n\rightarrow \infty$, where $c\neq 0$ is a constant. $\|g\|_{\infty}=\sup_{x}|g(x)|$ stands for the supremum norm of function $g$. By $\xrightarrow{d}$ we denote the convergence in distribution, by $\rightarrow_p$ the convergence in probability and by $\overset{d}{=}$ we denote equality in distribution.

\subsection{U-statistics and Hoeffding decomposition}\label{Sub_U-statistics and Hoeffding decomposition}

The Wilcoxon test statistic $W_n(k)$ in (\ref{Wilcoxon-TS}) under the change-point model (\ref{model_bp}) can be decomposed into two terms
\begin{align}
&W_{n}(k) =\sum_{i=1}^{k}\sum_{j=k+1}^{n}( 1_{\{X_i \leq X_j\}}-1/2 )\nonumber\\
&= \begin{cases}
\sum_{i=1}^{k}\sum_{j=k+1}^{n}( 1_{\{Y_i \leq Y_j\}}-1/2 ) + \sum_{i=1}^{k}\sum_{j=k^*+1}^{n} 1_{\{Y_j < Y_i \leq Y_j+\Delta_n\}}, &1\leq k \leq k^*\\
\sum_{i=1}^{k}\sum_{j=k+1}^{n}( 1_{\{Y_i \leq Y_j\}}-1/2 ) + \sum_{i=1}^{k^*}\sum_{j=k+1}^{n} 1_{\{Y_j < Y_i \leq Y_j+\Delta_n\}}, &k^* < k \leq n,
\end{cases}\nonumber\\
&= \begin{cases}
U_n(k) + U_n(k,k^*), & 1\leq k \leq k^*\\
U_n(k) + U_n(k^*,k), & k^* < k \leq n, \label{decompose_Wn}
\end{cases}
\end{align}
where
\begin{align}
U_n(k) &= \sum_{i=1}^{k}\sum_{j=k+1}^{n}( 1_{\{Y_i \leq Y_j\}}-1/2 ), &1\leq k \leq n,\label{decompose_Un}\\
U_n(k,k^*) &= \sum_{i=1}^{k}\sum_{j=k^*+1}^{n} 1_{\{Y_j < Y_i \leq Y_j+\Delta_n\}}, &1\leq k \leq k^*,\label{decompose_Un_k1}\\
U_n(k^*,k) &= \sum_{i=1}^{k^*}\sum_{j=k+1}^{n} 1_{\{Y_j < Y_i \leq Y_j+\Delta_n\}}, &k^* < k \leq n.\label{decompose_Un_k2}
\end{align}

The first term $U_n(k)$ depends only on the underlying process $(Y_j)$, while the terms $U_n(k,k^*)$ and $U_n(k^*,k)$ depend in addition on the change-point time $k^*$ and the magnitude $\Delta_n$ of the change in the mean.\par 
The term $U_n(k)$ can be written as a second order U-statistic
\begin{equation*}
 U_n\left(k\right) = \sum_{i=1}^{k}\sum_{j=k+1}^{n}\left(h\left(Y_i,Y_j\right)-\Theta\right), \qquad 1\leq k \leq n,
\end{equation*}
with the kernel function $h\left(x,y\right) = 1_{\left\{x\leq y\right\}}$ and the constant $\Theta = \E h\left(Y_1',Y_2'\right) = 1/2$, where $Y_1'$ and $Y_2'$ are independent copies of $Y_1$.\par 
We apply to $U_n\left(k\right)$ Hoeffding's decomposition of U-statistics established by \citet{Hoeffding.1948}. It allows to write the kernel function as the sum
\begin{equation}\label{hoeffding_decomposition_h}
h\left(x,y\right) = \Theta + h_1\left(x\right) + h_2\left(y\right) + g\left(x,y\right),
\end{equation}
where
\begin{align*}
h_1\left(x\right) &= \E h\left(x,Y_2'\right) - \Theta = 1/2 - \F\left(x\right),\qquad
h_2\left(y\right)= \E h\left(Y_1',y\right) - \Theta = \F\left(y\right) - 1/2,\\
g\left(x,y\right) &= h\left(x,y\right) -   h_1\left(x\right) - h_2\left(y\right)-\Theta.
\end{align*}
By definition of $h_1$ and $h_2$, $\E h_1(Y_1)=0$ and $\E h_2(Y_1)=0$. Hence, $\E g(x,Y_1)=\E g(Y_1,y)=0$, i.e. $g(x,y)$ is a degenerate kernel.\par 

The term $U_n(k,k^*)$ in (\ref{decompose_Un_k1}) (and $U_n(k^*,k)$ in (\ref{decompose_Un_k2})) can be written as a U-statistic
\[
U_n(k,k^*) = \sum_{i=1}^{k}\sum_{j=k^*+1}^{n}h_n(Y_i,Y_j),\qquad 1\leq k \leq k^*,
\]
 with the kernel $h_n(x,y) = h(x,y+\Delta_n)-h(x,y) = 1_{\{y<x\leq y+\Delta_n\}}$. The Hoeffding decomposition allows to write the kernel as
\begin{equation}\label{hoeffding_decomposition_hn}
h_n\left(x,y\right) = \Theta_{\Delta_n} + h_{1,n}\left(x\right) + h_{2,n}\left(y\right) + g_n\left(x,y\right),
\end{equation}
with $\Theta_{\Delta_n} = \E 1_{\{Y_2'\leq Y_1' \leq Y_2'+\Delta_n\}},$
\begin{align*}
h_{1,n}\left(x\right) &= \E h_n\left(x,Y_2'\right) - \Theta_{\Delta_n} = \F\left(x\right) - \F\left(x-\Delta_n\right) - \Theta_{\Delta_n}, \\
h_{2,n}\left(y\right)&= \E h_n\left(Y_1',y\right) - \Theta_{\Delta_n}  = \F\left(y+\Delta_n\right) - \F\left(y\right) - \Theta_{\Delta_n},\\
g_n\left(x,y\right) &= h_n\left(x,y\right) -   h_{1,n}\left(x\right) - h_{2,n}\left(y\right)-\Theta_{\Delta_n}.
\end{align*}
By assumption the distribution function $\F$ of $Y_1$ has bounded probability density $f$ and bounded second derivative. This allows to specify the asymptotic behaviour of $\Theta_{\Delta_n}$, as $n\rightarrow\infty,$
\begin{align}
\Theta_{\Delta_n} &= \E 1_{\{Y_2' < Y_1' \leq Y_2'+\Delta_n\}} = \p\left(Y_2' < Y_1' \leq Y_2'+\Delta_n\right)\nonumber\\ 
&= \int_{\mathbb{R}}\left(\F\left(y+\Delta_n\right)-\F\left(y\right)\right)dF\left(y\right) = \Delta_n \bigg(\int_{\mathbb{R}}f^2\left(y\right)dy + o(1)\bigg). \label{remark_theta_delta_n}
\end{align}
Note that $\E h_{1,n}(Y_1)=0$ and $\E h_{2,n}(Y_1)=0$. Therefore, $g_n(x,y)$ is a degenerate kernel, i.e.  $\E g_n(x,Y_1)=\E g_n(Y_1,y)=0$. Furthermore, $\|h_{1,n} \|_{\infty}\rightarrow 0$, as $n\rightarrow \infty$, since 
\begin{equation}\label{abschaetzung_h1n_kleiner_delta}
|h_{1,n}(x)| \leq |\F(x)-\F(x-\Delta_n)-\Theta_{\Delta_n}|\leq C\Delta_n + \Theta_{\Delta_n} \leq C\Delta_n,
\end{equation}
where $C>0$ is a constant and $\Delta_n\rightarrow 0$, as $n\rightarrow\infty$.

\subsection{1-continuity property of kernel functions \boldmath{$h$} and \boldmath{$h_n$}}\label{Sub_1-continuity_h_hn}

Asymptotic properties of near epoch dependent processes $(Y_j)$ introduced in Section \ref{Main Results} are well investigated in the literature, see e.g. \citet{borovkova.2001}. In the context of change-point estimation we are interested in asymptotic properties of the variables $h(Y_i,Y_j)$, where $h(x,y)=1_{\{x\leq y\}}$ is the Wilcoxon kernel, and also in properties of the terms $h_1(Y_j)$ and $h_{1,n}(Y_j)$ of the Hoeffding decomposition of the kernels in (\ref{hoeffding_decomposition_h}) and (\ref{hoeffding_decomposition_hn}). We will need to show that the variables $(h(Y_i,Y_j))$, $(h_1(Y_j))$ and $(h_{1,n}(Y_j))$ retain some properties of $(Y_j)$. To derive them, we will use the fact that the kernels $h$ in (\ref{hoeffding_decomposition_h}) and $h_n$ in (\ref{hoeffding_decomposition_hn}) satisfy the $1$-continuity condition introduced by \citet{borovkova.2001}.

\begin{definition}\thlabel{1-continuous}\par
We say that the kernel $h\left(x,y\right)$ is $1$-continuous with respect to a distribution of a stationary process $(Y_j)$ if there exists a function $\phi(\epsilon)\geq 0$, $\epsilon\geq 0$ such that $\phi\left(\epsilon\right)\rightarrow 0$, $\epsilon\rightarrow 0$, and for all $\epsilon > 0$ and $k\geq 1$
\begin{align}
\E \left(\left| h\left(Y_1,Y_k\right)-h\left(Y_1',Y_k\right) \right| 1_{\left\{\left|Y_1-Y_1'\right|\leq \epsilon\right\}}\right) & \leq \phi\left(\epsilon\right),\label{1-cont_11}\\
\E \left(\left| h\left(Y_k,Y_1\right)-h\left(Y_k,Y_1'\right) \right| 1_{\left\{\left|Y_1-Y_1'\right|\leq \epsilon\right\}}\right) & \leq \phi\left(\epsilon\right),\nonumber
\end{align}
and
\begin{align}
\E \left(\left| h\left(Y_1,Y_2'\right)-h\left(Y_1',Y_2'\right) \right| 1_{\left\{\left|Y_1-Y_1'\right|\leq \epsilon\right\}}\right) & \leq \phi\left(\epsilon\right),\label{1-cont_22}\\
\E \left(\left| h\left(Y_2',Y_1\right)-h\left(Y_2',Y_1'\right) \right| 1_{\left\{\left|Y_1-Y_1'\right|\leq \epsilon\right\}}\right) & \leq \phi\left(\epsilon\right),\nonumber
\end{align}
where $Y_2'$ is an independent copy of $Y_1$ and $Y_1'$ is any random variable that has the same distribution as $Y_1$.
\end{definition}

For a univariate function $g(x)$ we define the $1$-continuity property as follows.

\begin{definition}\par
The function $g\left(x\right)$ is $1$-continuous with respect to a distribution of a stationary process $(Y_j)$ if there exists a function $\phi(\epsilon)\geq 0$, $\epsilon\geq 0$ such that $\phi\left(\epsilon\right)\rightarrow 0$, $\epsilon\rightarrow 0$, and for all $\epsilon > 0$
\begin{align}\label{1-cont_3}
\E \left(\left| g\left(Y_1\right)-g\left(Y_1'\right) \right| 1_{\left\{\left|Y_1-Y_1'\right|\leq \epsilon\right\}}\right) & \leq \phi\left(\epsilon\right),
\end{align}
where $Y_1'$ is any random variable that has the same distribution as $Y_1$.
\end{definition}

\thref{lemma_wilcoxon_1_continuous} below establishes the $1$-continuity of functions $h(x,y)=1_{\{x\leq y\}}$ and $h_n(x,y)=1_{\{y<x\leq y+\Delta_n\}}$, $n\geq 1$. For $h_n$, $n\geq 1$ we assume that (\ref{1-cont_11}) and (\ref{1-cont_22}) hold with the same $\phi(\epsilon)$ for all $n\geq 1$. We start the proof by showing the $1$-continuity of the more general kernel function 
$h(x,y;t)=1_{\{x-y\leq t\}}$.

\begin{lemma}\thlabel{lemma_kernel:t_1_continuous}
Let $(Y_j)$ be a stationary process, $Y_1$ have distribution function $F$ which has bounded first and second derivative and $Y_1-Y_k$, $k\geq 1$ satisfy (\ref{Bed_gem_Verteilung}). Then the function $h(x,y;t)=1_{\{x-y\leq t\}}$ is $1$-continuous with respect to the distribution function of $(Y_j)$. 
\end{lemma}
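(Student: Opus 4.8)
The plan is to verify the four defining inequalities of \thref{1-continuous} directly for the kernel $h(x,y;t)=1_{\{x-y\le t\}}$, treating separately the two \emph{dependent} conditions in (\ref{1-cont_11}), whose second argument is $Y_k$, and the two \emph{independent} conditions in (\ref{1-cont_22}), whose second argument is the independent copy $Y_2'$. In each case the integrand is the modulus of a difference of two indicators of half-lines $\{\,\cdot\le t\}$; it equals $1$ exactly when the two arguments lie on opposite sides of the threshold $t$ and $0$ otherwise. The governing observation is that on the event $\{|Y_1-Y_1'|\le\epsilon\}$ the two arguments differ by at most $\epsilon$, so they can straddle $t$ only if both, and in particular the relevant difference variable, fall into a window of length at most $2\epsilon$ centred at $t$. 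Every inequality therefore reduces to estimating the probability that a difference of the form $Y_1-Y_k$, $Y_k-Y_1$, $Y_1-Y_2'$ or $Y_2'-Y_1$ lies in such a window.

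Concretely, for the first inequality in (\ref{1-cont_11}) I would use $(Y_1-Y_k)-(Y_1'-Y_k)=Y_1-Y_1'$ to write
\begin{align*}
\bigl|h(Y_1,Y_k;t)-h(Y_1',Y_k;t)\bigr|\,1_{\{|Y_1-Y_1'|\le\epsilon\}}
&=\bigl|1_{\{Y_1-Y_k\le t\}}-1_{\{Y_1'-Y_k\le t\}}\bigr|\,1_{\{|Y_1-Y_1'|\le\epsilon\}}\\
&\le 1_{\{t-\epsilon< Y_1-Y_k\le t+\epsilon\}}.
\end{align*}
Taking expectations and invoking (\ref{Bed_gem_Verteilung}) bounds the left-hand side by $2C\epsilon$, with a constant $C$ that does not depend on $k$ or $t$. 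The second inequality in (\ref{1-cont_11}) is identical after noting $(Y_k-Y_1)-(Y_k-Y_1')=Y_1'-Y_1$, producing the window $\{t-\epsilon<Y_k-Y_1\le t+\epsilon\}$, which is controlled by the same uniform Lipschitz bound on the law of the dependent difference (via stationarity).

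For the two independent conditions in (\ref{1-cont_22}) the same reduction yields the windows $\{t-\epsilon<Y_1-Y_2'\le t+\epsilon\}$ and $\{t-\epsilon<Y_2'-Y_1\le t+\epsilon\}$. Since $Y_2'$ is an independent copy of $Y_1$ and $F$ has bounded density $f=F'$, each of these difference variables has a density (the convolution of $f$ with itself) bounded by $\|f\|_{\infty}$, so the corresponding probabilities are at most $2\|f\|_{\infty}\epsilon$. Choosing $\phi(\epsilon)=2\max\{C,\|f\|_{\infty}\}\epsilon$, which is admissible because $\phi(\epsilon)\to0$ as $\epsilon\to0$ and is independent of both $t$ and $k$, verifies all four inequalities and proves the claim. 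The uniformity in $t$ is incidentally what will later let the same $\phi$ serve for $h_n$ at every $n$.

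I expect the only genuinely delicate point to be that $\phi$ must be uniform in the lag $k$: the disagreement probability for the dependent pair $(Y_1,Y_k)$ has to be bounded by a small multiple of $\epsilon$ simultaneously for all $k\ge1$. This is precisely the role of assumption (\ref{Bed_gem_Verteilung}), whose constant $C$ is lag-free; the independent-copy conditions, by contrast, follow immediately from $\|f\|_{\infty}<\infty$. Hence the heart of the argument is the uniform-in-$k$ window estimate for $Y_1-Y_k$, and everything else is a routine indicator computation.
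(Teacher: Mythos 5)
Your proposal is correct and follows essentially the same route as the paper's proof: the same reduction of the indicator difference to the event $\{t-\epsilon\leq Y_1-Y_k\leq t+\epsilon\}$ (resp. $\{t-\epsilon\leq Y_1-Y_2'\leq t+\epsilon\}$), with the dependent pairs handled by assumption (\ref{Bed_gem_Verteilung}) and the independent pairs by the bounded density of $Y_1-Y_2'$, which the paper obtains via the mean value theorem applied to $\int_{\mathbb{R}}(\F(y+t+\epsilon)-\F(y+t-\epsilon))\,dF(y)$ --- the same convolution bound you invoke. Your closing remarks on uniformity of $\phi$ in $t$ and $k$ match the paper's conclusion that $\phi(\epsilon)=C\epsilon$ with $C$ independent of $t$ and $k$.
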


\begin{proof}
The proof is similar to the proof of $1$-continuity of the kernel function $h(x,y;t)=1_{\{|x-y|\leq t\}}$ given in Example 2.2 of \citet{borovkova.2001}.\par 
 Note that $1_{\{Y_1- Y_k\leq t\}}-1_{\{Y_1'- Y_k\leq t\}}=0$ if $Y_1-Y_k\leq t$ and $Y_1'-Y_k\leq t$; or $Y_1-Y_k> t$ and $Y_1'-Y_k> t$. The difference is not zero if  $Y_1-Y_k\leq t$ and $Y_1'-Y_k> t$; or $Y_1-Y_k> t$ and $Y_1'-Y_k\leq t$.
Let $|Y_1-Y_1'|\leq \epsilon$, where $\epsilon>0$. Then $Y_1-Y_k<t-\epsilon$ implies $Y_1'-Y_k<t$, and $Y_1-Y_k>t+\epsilon$ implies $Y_1'-Y_k>t$.\par 
Hence, $|1_{\{Y_1- Y_k\leq t\}}-1_{\{Y_1'- Y_k\leq t\}}|1_{\left\{\left|Y_1-Y_1'\right|\leq \epsilon\right\}}\leq 1_{\left\{t-\epsilon\leq Y_1-Y_k\leq t+\epsilon\right\}}$. Therefore,
\begin{equation}\label{ineq_1_continuous}
\E\left(\left|1_{\{Y_1- Y_k\leq t\}}-1_{\{Y_1'- Y_k\leq t\}}\right|1_{\left\{\left|Y_1-Y_1'\right|\leq \epsilon\right\}}\right) \leq \p\left(t-\epsilon\leq Y_1-Y_k\leq t+\epsilon\right)\leq C_1\epsilon,
\end{equation}
because of assumption (\ref{Bed_gem_Verteilung}).
Similar argument yields
\begin{align*}
\E\left(\left|1_{\{Y_k- Y_1\leq t\}}-1_{\{Y_k- Y_1'\leq t\}}\right|1_{\left\{\left|Y_1-Y_1'\right|\leq \epsilon\right\}}\right)&\leq \p\left(t-\epsilon\leq Y_1-Y_k\leq t+\epsilon\right)\leq C_1\epsilon, \\
\E\left(\left|1_{\{Y_1- Y_2'\leq t\}}-1_{\{Y_1'- Y_2'\leq t\}}\right|1_{\left\{\left|Y_1-Y_1'\right|\leq \epsilon\right\}}\right) &\leq \p(t-\epsilon \leq Y_1-Y_2'\leq t+\epsilon)\leq C_2\epsilon,\\
\E\left(\left|1_{\{Y_2'- Y_1\leq t\}}-1_{\{Y_2'- Y_1'\leq t\}}\right|1_{\left\{\left|Y_1-Y_1'\right|\leq \epsilon\right\}}\right) &\leq \p(t-\epsilon \leq Y_1-Y_2'\leq t+\epsilon)\leq C_2\epsilon,
\end{align*}
where $Y_2'$ is an independent copy of $Y_1$, noting that by the mean value theorem and $|d\F(y)/dy|\leq C$,
\begin{align*}
\p(t-\epsilon\leq Y_1-Y_2'\leq t+\epsilon) &= \int_{\mathbb{R}}\left(\F\left(y+t+\epsilon\right)-\F\left(y+t-\epsilon\right)\right)dF\left(y\right)\\
&\leq C\epsilon\int_{\mathbb{R}}f(y)dy = C_2\epsilon.
\end{align*}
These bounds imply (\ref{1-cont_11}) and (\ref{1-cont_22}) with $\phi(\epsilon)=C\epsilon$, where $C$ does not depend on $t$. This completes the proof.
\end{proof}

\begin{coro}\thlabel{lemma_wilcoxon_1_continuous}
Assume that assumptions of \thref{lemma_kernel:t_1_continuous} are satisfied. Then, \begin{enumerate}[(i)]
\item Function $h(x,y)=1_{\{x\leq y\}}$ is $1$-continuous with respect to the distribution function of $(Y_j)$. 
\item Function $h_n(x,y)=1_{\{y<x\leq y+\Delta_n\}}$ is $1$-continuous with respect to the distribution function of $(Y_j)$. 
\end{enumerate}
\end{coro}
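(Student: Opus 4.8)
The plan is to reduce both statements to \thref{lemma_kernel:t_1_continuous} by writing $h$ and $h_n$ in terms of the general kernel $h(x,y;t)=1_{\{x-y\leq t\}}$, whose $1$-continuity has already been established with modulus $\phi(\epsilon)=C\epsilon$, where crucially $C$ does not depend on the threshold $t$.

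For part (i), I would observe that $h(x,y)=1_{\{x\leq y\}}=1_{\{x-y\leq 0\}}=h(x,y;0)$, so that $h$ is simply the special case $t=0$ of the general kernel. The four inequalities of \thref{1-continuous}, namely (\ref{1-cont_11}) and (\ref{1-cont_22}), then hold verbatim by \thref{lemma_kernel:t_1_continuous} with $\phi(\epsilon)=C\epsilon$.

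For part (ii), the key algebraic step is the decomposition
\[
h_n(x,y)=1_{\{y<x\leq y+\Delta_n\}}=1_{\{x-y\leq\Delta_n\}}-1_{\{x-y\leq 0\}}=h(x,y;\Delta_n)-h(x,y;0).
\]
I would then apply the triangle inequality to each of the four differences appearing in \thref{1-continuous}. For example, for (\ref{1-cont_11}),
\[
\big|h_n(Y_1,Y_k)-h_n(Y_1',Y_k)\big|\leq\big|h(Y_1,Y_k;\Delta_n)-h(Y_1',Y_k;\Delta_n)\big|+\big|h(Y_1,Y_k;0)-h(Y_1',Y_k;0)\big|.
\]
Multiplying by $1_{\{|Y_1-Y_1'|\leq\epsilon\}}$, taking expectations, and invoking \thref{lemma_kernel:t_1_continuous} once with $t=\Delta_n$ and once with $t=0$ bounds the right-hand side by $2C\epsilon$. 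The three remaining inequalities are treated identically, so $h_n$ is $1$-continuous with $\phi(\epsilon)=2C\epsilon$.

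There is essentially no analytic difficulty here; the only point demanding care is uniformity in $n$. Because the constant $C$ in \thref{lemma_kernel:t_1_continuous} is independent of $t$, the resulting modulus $2C\epsilon$ for $h_n$ does not depend on $\Delta_n$ and therefore serves as a single $\phi(\epsilon)$ valid for all $n\geq 1$ — exactly the uniform version required in the remark immediately preceding the statement.
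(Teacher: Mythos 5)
Your proposal is correct and matches the paper's own proof essentially step for step: part (i) is the specialization $h(x,y)=h(x,y;0)$, and part (ii) decomposes $h_n$ as the difference $h(x,y;\Delta_n)-h(x,y;0)$ (the paper writes this as $h(x,y+\Delta_n)-h(x,y)$, which is the same thing) and applies the triangle inequality together with the $t$-uniform bound from \thref{lemma_kernel:t_1_continuous}. Your explicit remark that the independence of $C$ from $t$ yields a single modulus $\phi(\epsilon)=2C\epsilon$ valid for all $n$ is exactly the uniformity the paper requires.
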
 

\begin{proof}
\textit{(i)} follows from \thref{lemma_kernel:t_1_continuous}, noting that $1_{\{x\leq y\}}=h(x,y;0)$.\par 
\textit{(ii)} We need to verify (\ref{1-cont_11}) and (\ref{1-cont_22}). Write $h_n(x,y)=h(x,y)-h(x,y+\Delta_n)=1_{\{x\leq y\}}-1_{\{x\leq y+\Delta_n\}}$. Then by (\ref{ineq_1_continuous}),
\begin{multline*}
\E\big(|h_n(Y_1,Y_k)-h_n(Y_1',Y_k)|1_{\left\{\left|Y_1-Y_1'\right|\leq \epsilon\right\}}\big)
\leq \E\big(|1_{\{Y_1\leq Y_k\}}-1_{\{Y_1'\leq Y_k\}}|1_{\left\{\left|Y_1-Y_1'\right|\leq \epsilon\right\}}\big)
\\ + \E\big(|1_{\{Y_1\leq Y_k+\Delta_n\}}-1_{\{Y_1'\leq Y_k+\Delta_n\}}|1_{\left\{\left|Y_1-Y_1'\right|\leq \epsilon\right\}}\big)
\leq C\epsilon.
\end{multline*}
Similar argument yields $\E\big(|h_n(Y_k,Y_1)-h_n(Y_k,Y_1')|1_{\left\{\left|Y_1-Y_1'\right|\leq \epsilon\right\}}\big) \leq C\epsilon,$
\begin{align*}
\E\big(|h_n(Y_1,Y_2')-h_n(Y_1',Y_2')|1_{\left\{\left|Y_1-Y_1'\right|\leq \epsilon\right\}}\big) &\leq C\epsilon,\\
\E\big(|h_n(Y_2',Y_1)-h_n(Y_2',Y_1')|1_{\left\{\left|Y_1-Y_1'\right|\leq \epsilon\right\}}\big)&\leq C\epsilon.
\end{align*}
Hence, (\ref{1-cont_11}) and (\ref{1-cont_22}) hold with $\phi(\epsilon)=C\epsilon$.
\end{proof}

Note that condition (\ref{Bed_gem_Verteilung}) is satisfied if variables $(Y_1,Y_k)$, $k\geq 1$, have joint probability densities that are bounded by the same constant $C$ for all $k$. If the joint density does not exist, for examples of verification of condition (\ref{Bed_gem_Verteilung}) see pages 4315, 4316 of \citet{borovkova.2001}.\par\bigskip

Lemma 2.15 of \citet{borovkova.2001} yields that if a general function $h(x,y)$ is $1$-continuous, i.e. satisfies (\ref{1-cont_11}) and (\ref{1-cont_22}) with function $\phi(\epsilon)$ then $\E h\left(x,Y_2'\right)$, where $Y_2'$ is an independent copy of $Y_1$, is also $1$-continuous and satisfies the condition in (\ref{1-cont_3}) with the same function $\phi(\epsilon)$. Hence, $h_i(x)$ and $h_{i,n}(x)$, $i=1,2$ are $1$-continuous and satisfy the condition in (\ref{1-cont_3}) with $\phi(\epsilon)=C\epsilon$.\par\bigskip

Next we turn to $1$-continuity property of $g(x,y)$. 
By Hoeffding decomposition (\ref{hoeffding_decomposition_h}), $g(x,y)=h(x,y)-\Theta-h_1(x)-h_2(y)$. Since $h(x,y)$, $h_1(x)$ and $h_2(x)$ in (\ref{hoeffding_decomposition_h}) are $1$-continuous and satisfy (\ref{1-cont_11}), (\ref{1-cont_22}) and (\ref{1-cont_3}) with the same function $\phi(\epsilon)=C\epsilon$, then $g(x,y)$ is also $1$-continuous with function $\phi(\epsilon)=C\epsilon$. Indeed,
\begin{multline*}
\E \left(|g(Y_1,Y_k)-g(Y_1',Y_k)|1_{\left\{\left|Y_1-Y_1'\right|\leq \epsilon\right\}} \right)\\
\leq \E \left(|h(Y_1,Y_k)-h(Y_1',Y_k)|1_{\left\{\left|Y_1-Y_1'\right|\leq \epsilon\right\}} \right) + \E \left(|h_1(Y_1)-h_1(Y_1')|1_{\left\{\left|Y_1-Y_1'\right|\leq \epsilon\right\}} \right)
\leq 2\phi(\epsilon)
\end{multline*}
and similarly, $\E \big(|g(Y_k,Y_1)-g(Y_k,Y_1')|1_{\left\{\left|Y_1-Y_1'\right|\leq \epsilon\right\}} \big)\leq 2\phi(\epsilon)$.\par 
Using the same argument, it follows that the function $g_n(x,y)=h_n(x,y)-\Theta_{\Delta_n}-h_{1,n}(x)-h_{2,n}(x)$ in the Hoeffding decomposition (\ref{hoeffding_decomposition_hn}) is also $1$-continuous and satisfies (\ref{1-cont_11}), (\ref{1-cont_22}) with $\phi(\epsilon)=C\epsilon$.\par

\subsection{NED property of \boldmath{$(h_1(Y_j))$} and \boldmath{$(h_{1,n}(Y_j))$}} \label{Properties of the kernel}

In Proposition 2.11 of \citet{borovkova.2001} it is shown that if $(Y_j)$ is $L_1$ NED on a stationary absolutely regular process $(Z_j)$ with approximation constants $a_k$ and $g(x)$ is $1$-continuous with function $\phi$, then $(g(Y_j))$ is also $L_1$ NED on $(Z_j)$ with approximation constants $\phi\left(\sqrt{2a_k}\right)+2\sqrt{2a_k}||g||_{\infty}$.\par
Thus, the processes $(h_1(Y_j))$ and $(h_2(Y_j))$ in (\ref{hoeffding_decomposition_h}) and $(h_{1,n}(Y_j))$ and $(h_{2,n}(Y_j))$ in (\ref{hoeffding_decomposition_hn}) are $L_1$ NED processes with approximation constants $a'_k=C\sqrt{a_k}$.\par \bigskip

Corollary 3.2 of \citet{wooldridge.1988} provides a functional central limit theorem for partial sum process $\sum_{i=1}^k \tilde{Y}_i$, $k\geq 1$, where $(\tilde{Y}_j)$ is $L_2$ NED on a strongly mixing process $(\tilde{Z}_j)$. To apply this result to $(h_1(Y_j))$ which is $L_1$ NED on $(Z_j)$ with approximation constants $a_k'$, we need to show that $(h_1(Y_j))$ is also $L_2$ NED process. Note that the variables $\eta_k := h_1(Y_1)-\E(h_1(Y_1)|\mathcal{G}_{-k}^k)$ have property
\begin{multline*}
\E\eta_k^2 = \E\big(\eta_k^2 1_{\big\{|\eta_k|\leq {a_k'}^{-\frac{1}{2}}\big\}}\big)+ \E\big(\eta_k^2 1_{\big\{|\eta_k|> {a_k'}^{-\frac{1}{2}}\big\}}\big)\\
\leq {a_k'}^{-\frac{1}{2}}\E|\eta_k| +  \sqrt{a_k'}\E|\eta_k|^{4}\leq \sqrt{a_k'} + \sqrt{a_k'} C =: a_k''.
\end{multline*}
The last inequality holds, because by \thref{NED} of $L_1$ near epoch dependence, $\E|h_1(Y_1)-\E(h_1(Y_1)|\mathcal{G}_{-k}^k)|\leq a_k'$ and because $|h_1(Y_1)|\leq 1/2$. Therefore the process $(h_1(Y_j))$ is $L_2$ NED on $(Z_j)$ with approximation constant $a_k''$. Since absolute regular process $(Z_j)$ is strongly mixing process, from Corollary 3.2 of \citet{wooldridge.1988}, we obtain
\begin{equation*}
\bigg( \frac{1}{n^{1/2}}\sum_{i=1}^{\left[nt\right]}h_1\left(Y_i\right) \bigg)_{0\leq t \leq 1} \xrightarrow{d} \left( \sigma W\left(t\right) \right)_{0\leq t \leq 1},
\end{equation*}
where $W\left(t\right)$ is a Brownian motion and $\sigma^2 =\sum_{k=-\infty}^{\infty}\Cov(h_1(Y_0),h_1(Y_k))$.\par
Since $h_2(x) = -h_1(x)$, all properties of $(h_1(Y_j))$ remain valid also for $(h_2(Y_j))$.

\subsection{Proof of \thref{consistence_of_bp_estimator}}\label{Proof}

First we show consistency property $|k^*-\hat{k}|=o_P(k^*)$ of the estimate $\hat{k}=\argmax_{1\leq k\leq n}|W_n(k)|$. To prove it, we verify that for any $\epsilon>0$,
\begin{equation}\label{zz_consistency_1}
\lim_{n\rightarrow \infty}\p \left( |k^*-\hat{k}| \leq \epsilon k^*\right) = 1.
\end{equation}
This means that the estimated value $\hat{k}$ with probability tending to $1$ is in a neighbourhood of the true value $k^*$:
\[
\p\big(\hat{k}\in[k^*(1-\epsilon),k^*(1+\epsilon)]\big) \rightarrow 1.
\]
We will show that as $n\rightarrow\infty$, 
\begin{equation}\label{zz_consistency_2}
\p\Big( \max_{k:|k-k^*|\geq \epsilon k^*}|W_n(k)| < |W_n(k^*)| \Big) \rightarrow 1.
\end{equation}
Since $|W_n(k^*)|\leq \max_{k:|k^*-k|\leq \epsilon k^*}|W_n(k)|$, this proves (\ref{zz_consistency_1}).\par
\bigskip
By (\ref{decompose_Wn}),
\begin{equation*}
W_{n}(k)= \begin{cases}
U_n(k) + U_n(k,k^*),\qquad 1\leq k \leq k^*\\
U_n(k) + U_n(k^*,k),\qquad k^* < k \leq n.
\end{cases}
\end{equation*}

\thref{Theorem_DFGW} implies $\max_{1\leq k \leq n} \left|U_n(k)\right| = O_P\left(n^{3/2}\right)$ and \thref{Lemma_abschaetzung_bp} below yields
\begin{align*}
\max_{1\leq k \leq k^*}\left| U_n(k,k^*)- k\left(n-k^*\right)\Theta_{\Delta_n}\right| &=  o_P\left(n^{3/2}\right),\\
\max_{k^*\leq k \leq n}\left|U_n(k^*,k)-k^*\left(n-k\right)\Theta_{\Delta_n}\right| &= o_P\left(n^{3/2}\right).
\end{align*}
Hence,
\begin{align*}
W_n(k^*) &=k^*(n-k^*)\Theta_{\Delta_n} + (U_n(k^*,k^*)-k^*(n-k^*)\Theta_{\Delta_n})+U_n(k^*)\\ &= k^*(n-k^*)\Theta_{\Delta_n} + O_P(n^{3/2}),\\
\max_{1\leq k \leq k^*(1-\epsilon)} |W_n(k)| &\leq (1-\epsilon)k^*(n-k^*)\Theta_{\Delta_n} + O_P(n^{3/2}),\\
\max_{(1+\epsilon)k^*\leq k \leq n} |W_n(k)| &\leq k^*(n-(1+\epsilon)k^*)\Theta_{\Delta_n} + O_P(n^{3/2}).
\end{align*}
Thus,
\begin{equation*}
|W_n(k^*)|-\max_{k:|k^*-k|\geq \epsilon k^*} |W_n(k)| \geq \epsilon \delta_n + O_P(n^{3/2}),
\end{equation*}
where $\delta_n = k^*\min(n-k^*,{k^*})\Theta_{\Delta_n}$.\par 

By definition $k^*=[n\theta]\sim n\theta$, and by (\ref{remark_theta_delta_n}) and (\ref{assumption_size_change_infty}), $\sqrt{n}\Theta_{\Delta_n}\sim c\sqrt{n}\Delta_n\rightarrow\infty$. Hence, $\delta_n^{-1}=o(n^{-3/2})$ and $\epsilon \delta_n + O_P(n^{3/2}) = \epsilon\delta_n(1+O_P(n^{3/2}\delta_n^{-1}))=\epsilon\delta_n(1+o_P(1))$ which proves (\ref{zz_consistency_2}).\par 
 \bigskip
 Next we establish the rate of convergence in (\ref{rate_consistency_k_hat}), $k^*-k=O_P(1/\Delta_n^2)$. Set $a(n)=\frac{M}{\Delta_n^2}$. Then for fixed $M>0$, $a(n)\rightarrow\infty$, as $n\rightarrow\infty$. We will verify that 
\begin{equation*}
\lim_{n\rightarrow \infty}\p \left( |k^*-\hat{k}| \leq a(n)\right) \rightarrow 1, \qquad \text{as} \; M\rightarrow\infty, 
\end{equation*}
which implies (\ref{rate_consistency_k_hat}). As in (\ref{zz_consistency_2}), we prove this by showing
 \begin{equation}\label{zz_consistency_3}
\lim_{n\rightarrow \infty}\p\Big( \max_{k:|k-k^*|\geq a(n)}|W_n(k)| < |W_n(k^*)| \Big) \rightarrow 1, \qquad \text{as} \; M\rightarrow\infty.
\end{equation}
 
Define $V_k := W_{n}^2\left(k\right)-W_{n}^2\left(k^*\right)$. If $|W_{n}\left(k\right)|$ attains its maximum at $k'$, it is easy to see that $V_k$ attains its maximum at the same $k'$. Hence, $ \hat{k}= \min \{ k: \max_{1\leq l \leq n} |W_n(l)|=|W_n(k)| \} = \min \{ k: \max_{1\leq l \leq n} V_l  = V_k\}$. Thus, instead of (\ref{zz_consistency_3}) it remains to show that
 \begin{equation}\label{zz_consistency_4}
\lim_{n\rightarrow \infty}\p\Big( \max_{k:|k-k^*|\geq a(n)}V_k < 0 \Big) \rightarrow 1, \qquad M\rightarrow\infty.
\end{equation}
 
Define $\tilde{k} := \min \{ k: |k-k^*|\leq \epsilon k^*; V_k = \max_{n\alpha\leq l \leq n\beta} V_l\}$. Since by (\ref{zz_consistency_1}) $\hat{k}$ is a consistent estimator of $k^*$, it holds $\lim_{n\rightarrow\infty}\p(\hat{k}=\tilde{k})=1$.\par 
So, in the proof of (\ref{zz_consistency_4}) it suffices to consider $\max$ over $k$, such that $|k-k^*|\leq \epsilon k^*$, $|k-k^*|\geq a(n)$, which corresponds to $(1-\epsilon)k^*\leq k \leq k^*-a(n)$ and $k^*+a(n)<k\leq (1+\epsilon)k^*$.\par \bigskip

Let us start with $(1-\epsilon)k^*\leq k \leq k^*-a(n)$. Since $k^*-k>0$, relation (\ref{zz_consistency_4}) holds for such $k$, if
 \begin{equation}\label{zz_consistency_5}
\lim_{n\rightarrow \infty}\p\Big( \max_{(1-\epsilon)k^*\leq k \leq k^*-a(n)}\frac{V_k}{(n(k^*-k))^2} < 0 \Big) \rightarrow 1, \qquad M\rightarrow\infty.
\end{equation}

Note that
\begin{align}\label{zerlegung_v_k}
\frac{-V_k}{(n(k^*-k))^2} &= \frac{W_n^2(k^*)-W_n^2(k)}{(n(k^*-k))^2}\nonumber\\
&= -\bigg(\frac{W_n(k^*)-W_n(k)}{n(k^*-k)}\bigg)^2+2\frac{W_n(k^*)-W_n(k)}{n(k^*-k)}\frac{W_n(k^*)}{n(k^*-k)}.
\end{align}

By (\ref{decompose_Wn}), $W_n(k)=U_n(k)+U_n(k,k^*)$. Then,
\begin{equation*}
\frac{W_n(k^*)-W_n(k)}{n(k^*-k)} = \frac{n-k^*}{n}\Theta_{\Delta_n} + \delta_{1,k} + \delta_{2,k},
\end{equation*}
where
\[
\delta_{1,k}= \frac{U_n(k^*)-U_n(k)}{n(k^*-k)}, \qquad \delta_{2,k}=\frac{U_n(k^*,k^*)-U_n(k,k^*)}{n(k^*-k)}-\frac{n-k^*}{n}\Theta_{\Delta_n}.
\]
Observe that by (\ref{remark_theta_delta_n}), $\Theta_{\Delta_n}\sim c_{*}\Delta_n$, $c_{*}>0$, and $k^*/n\rightarrow\theta$. Therefore, $(n-k^*)/n\Theta_{\Delta_n}\sim c_0\Delta_n$, where $c_0=(1-\theta)c_{*}$. Moreover, $\max_{1\leq k \leq k^*-a(n)}|\delta_{i,k}|=o_P(\Delta_n)$, $i=1,2$, by (\ref{zz_gewichtetes_maximum}) and (\ref{zz_gewichtetes_maximum_unkk}) of \thref{Lemma_abschaetzung_gewichtetes_maximum}. Hence,
\begin{equation}\label{abschaetzung_1}
\frac{W_n(k^*)-W_n(k)}{n(k^*-k)}=c_0\Delta_n(1+o_P(1)), \qquad \bigg(\frac{W_n(k^*)-W_n(k)}{n(k^*-k)}\bigg)^2=c_0^2\Delta_n^2(1+o_P(1)).
\end{equation}
In turn,
\[
\frac{W_n(k^*)}{n(k^*-k)}=\frac{U_n(k^*,k^*)}{n(k^*-k)}+\frac{U_n(k^*)}{n(k^*-k)}
\]
and 
\[
\frac{U_n(k^*,k^*)}{n} = \frac{k^*(n-k^*)\Theta_{\Delta_n}}{n}+ \frac{\delta_{3,k}}{n},
\]
where $\delta_{3,k}=U_n(k^*,k^*)-k^*(n-k^*)\Theta_{\Delta_n}$. By \thref{Lemma_abschaetzung_bp}, $\max_{1\leq k\leq k^*}\delta_{3,k}/n=o_P(n^{1/2})$. Since 
\begin{equation*}
\frac{k^*(n-k^*)\Theta_{\Delta_n}}{n}\sim k^*c_0\Delta_n \sim \theta c_0 n\Delta_n
\end{equation*}
and $\sqrt{n}=o(n\Delta_n)$, this implies
\[
\frac{U_n(k^*,k^*)}{n} = k^*c_0\Delta_n(1+o_P(1)).
\]
Next, by \thref{Theorem_DFGW} below, $U_n(k^*)=O_P(n^{3/2})$, and hence, $U_n(k^*)/n=O_P(n^{1/2})$. Therefore, $W_n(k^*)/n= k^*c_0\Delta_n(1+o_P(1))$. Hence, for $(1-\epsilon)k^*\leq k\leq k^*-a(n)$,
\begin{equation}\label{abschaetzung_2}
\frac{W_n(k^*)}{n(k^*-k)}=\frac{k^*c_0\Delta_n(1+o_P(1))}{k^*-k} \geq \frac{1}{\epsilon}c_0\Delta_n(1+o_P(1)).
\end{equation}
Using (\ref{abschaetzung_1}) and (\ref{abschaetzung_2}) in (\ref{zerlegung_v_k}), it follows
\begin{align*}
-\frac{V_k}{(n(k^*-k))^2} \geq \frac{2}{\epsilon}c_0^2\Delta_n^2(1+o_P(1))-c_0^2\Delta_n^2(1+o_P(1)) \geq \Big(\frac{2}{\epsilon}-1\Big)(c_0\Delta_n)^2(1+o_P(1)) >0.
\end{align*}
This proves (\ref{zz_consistency_5}). Similar argument yields
\begin{equation*}
\lim_{n\rightarrow\infty}\p\Big( \max_{k^*+a(n) \leq k \leq k^*(1+\epsilon)} V_k < 0 \Big) \rightarrow 1, \qquad  M\rightarrow\infty,
\end{equation*}
which completes the proof of (\ref{zz_consistency_4}) and the theorem.
$\hfill \Box $

\section{Auxiliary results}\label{Auxiliary Results}

This section contains auxiliary results used in the proof of \thref{consistence_of_bp_estimator}.\par
We establish asymptotic properties of the quantities $U_n(k)$, $U_n(k,k^*)$ and $U_n(k^*,k)$ defined in (\ref{decompose_Un})-(\ref{decompose_Un_k2}) and appearing in the decomposition (\ref{decompose_Wn}) of $W_n(k)$.\par \bigskip

The following lemma derives a H\'{a}jek-R\'{e}nyi type inequality for $L_1$ NED random variables.

\begin{lemma}\thlabel{hajek_renyi_inequality}
Let $\left(Y_j\right)$ be a stationary $L_1$ near epoch dependent process on some absolutely regular process $\left(Z_j\right)$, satisfying (\ref{condition_appr.const_regu.coeff}). Assume that $\E Y_j=0$ and $|Y_j|\leq K\leq \infty$ a.s. for some $K\geq 0$. Then, for all fixed $\epsilon>0$, for all $1\leq m \leq n$,
\begin{equation}\label{hajek_renyi}
\p \bigg(\max_{m\leq k \leq n} \frac{1}{k}\bigg|\sum_{i=1}^{k}Y_i\bigg| >\epsilon\bigg) \leq \frac{1}{\epsilon^2}\frac{C}{\sqrt{m}},
\end{equation}
where $C>0$ does not depend on $m$, $n$, $\epsilon$.
\end{lemma}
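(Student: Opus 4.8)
The plan is to reduce the weighted maximal inequality to a second–moment bound for the partial sums $S_k=\sum_{i=1}^k Y_i$, to upgrade that bound to a maximal inequality for $\max_{k\le N}|S_k|$ by a dyadic bisection, and finally to split $\{m,\dots,n\}$ into dyadic blocks and sum a geometric series. Throughout, the only feature of the dependence structure that enters is the decay of $|\Cov(Y_0,Y_k)|$, which is summable under (\ref{condition_appr.const_regu.coeff}); in fact only the much weaker $\sum_l(\beta_l+\sqrt{a_l})<\infty$ (which is implied by the hypothesis) is needed here.

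First I would prove $\E S_k^2\le Ck$, and more generally $\E(S_b-S_a)^2\le C(b-a)$. The key point is that the boundedness $|Y_j|\le K$ turns $L_1$ near epoch dependence into $L_2$ control of the approximation errors: writing $Y_0^{(l)}=\E(Y_0\mid\mathcal{G}_{-l}^{l})$ one has $\E(Y_0-Y_0^{(l)})^2\le 2K\,\E|Y_0-Y_0^{(l)}|\le 2K a_l$, since $|Y_0-Y_0^{(l)}|\le 2K$. Approximating both $Y_0$ and $Y_k$ by conditional expectations on $Z$-blocks of radius $l=\lfloor k/3\rfloor$ and splitting $\Cov(Y_0,Y_k)$ into a main part plus an error part, Cauchy--Schwarz together with the above $L_2$ estimate bounds the error by $C\sqrt{a_l}$, while the main part is a covariance of two functions measurable with respect to $Z$-blocks separated by a gap of order $k$, so the absolute regularity inequality for bounded functions bounds it by $C\beta_l$. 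Hence $|\Cov(Y_0,Y_k)|\le C\big(\beta_{\lfloor k/3\rfloor}+\sqrt{a_{\lfloor k/3\rfloor}}\big)$, and summability gives $\E S_k^2\le k\big(\Var(Y_0)+2\sum_{l\ge1}|\Cov(Y_0,Y_l)|\big)\le Ck$.

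Next I would pass from the additive bound $\E(S_b-S_a)^2\le C(b-a)$ to the maximal inequality $\E\max_{1\le k\le N}S_k^2\le C\,N(\log 2N)^2$ by the standard Móricz-type dyadic bisection. Finally, decomposing $\{m,\dots,n\}$ into blocks $I_j=\{k:2^j\le k<2^{j+1}\}$ with $j\ge j_0:=\lfloor\log_2 m\rfloor$, and using $|S_k|/k\le 2^{-j}\max_{k\in I_j}|S_k|$ on $I_j$, a union bound over blocks, Markov's inequality and the maximal inequality give
\[
\p\Big(\max_{m\le k\le n}\frac{|S_k|}{k}>\epsilon\Big)
\le \sum_{j\ge j_0}\p\Big(\max_{k<2^{j+1}}|S_k|>\epsilon\,2^{j}\Big)
\le \sum_{j\ge j_0}\frac{C\,2^{j+1}(j+2)^2}{\epsilon^{2}\,2^{2j}}.
\]
This series is geometric with a polynomial correction, so it is dominated by its first term at $j=j_0$, which is of order $\epsilon^{-2}m^{-1}(\log m)^2$. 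Since $(\log m)^2/m\le C/\sqrt{m}$ for all $m\ge1$, the right-hand side is bounded by $C\epsilon^{-2}m^{-1/2}$ with $C$ independent of $m,n,\epsilon$, which is the assertion (\ref{hajek_renyi}).

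The main obstacle is the second step: converting the pointwise second-moment bound into a bound on the maximum of the dependent partial sums. The covariance estimate is routine once boundedness supplies $L_2$ control, and the concluding summation is mechanical; it is in handling $\max_{k\le N}|S_k|$ that the dependence must be confronted, and it is there that the logarithmic loss — and hence the fact that the stated $\sqrt{m}$ rate is not sharp — originates.
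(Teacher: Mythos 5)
Your proof is correct, and it reaches (\ref{hajek_renyi}) by a genuinely different route than the paper. The two arguments share the first step: the covariance bound $|\Cov(Y_0,Y_k)|\leq C\big(\beta_{\lfloor k/3\rfloor}+\sqrt{a_{\lfloor k/3\rfloor}}\big)$ and the resulting increment bound $\E\big(\sum_{i=a+1}^{b}Y_i\big)^2\leq C(b-a)$ — the paper simply cites Lemma 2.18 of \citet{borovkova.2001} (quoted as \thref{lemma_2.18}), whereas you re-derive it, correctly, from boundedness, $L_1$ NED and absolute regularity. From there the paper plugs this second-moment bound, together with the cross-term estimate $\E\big(|Y_{k+1}|\big|\sum_{j=1}^{k}Y_j\big|\big)\leq C\sqrt{k}$, into the H\'ajek--R\'enyi-type inequality of \citet{Kokoszka.2000} (quoted as \thref{theorem_KL}) with weights $c_k=1/k$; the $m^{-1/2}$ rate there is produced by the tail sum $\sum_{k\geq m}k^{-3/2}$ coming from that cross term. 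You instead upgrade the increment bound to $\E\max_{1\leq k\leq N}S_k^2\leq CN(\log 2N)^2$ via the Rademacher--Menshov/M\'oricz bisection and then chain over dyadic blocks $\{2^j\leq k<2^{j+1}\}$. Your route buys three things: it is self-contained modulo a classical maximal inequality (no need for the Kokoszka--Leipus result); it makes explicit that only $\sum_{l}(\beta_l+\sqrt{a_l})<\infty$, much weaker than (\ref{condition_appr.const_regu.coeff}), is used (the paper's proof in fact also needs only this); and it yields the strictly sharper bound $C\epsilon^{-2}(\log 2m)^2/m$, of which the stated $C\epsilon^{-2}m^{-1/2}$ is a deliberate weakening. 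The price is the extra maximal-inequality step and the logarithmic factor. One small correction to your closing remark: the logarithmic loss in your chaining is not the source of the non-sharpness of the stated $m^{-1/2}$ rate — that rate is what the paper's own cross-term treatment delivers, and your bound improves on it regardless of the logarithm.
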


\begin{proof}
To prove (\ref{hajek_renyi}), we use the H\'{a}jek-R\'{e}nyi type inequality of \thref{theorem_KL} established in \citet{Kokoszka.2000},
\begin{multline}\label{inequality_KL}
\epsilon^2 \p\bigg(\max_{m\leq k \leq n} \frac{1}{k}\bigg|\sum_{i=1}^{k}Y_i\bigg| >\epsilon\bigg) \leq \frac{1}{m^2}\E\bigg(\sum_{i=1}^{m}Y_i\bigg)^2 + \sum_{k=m}^{n}\Big|\frac{1}{(k+1)^2}-\frac{1}{k^2}\Big|\E\bigg(\sum_{i=1}^{k}Y_i\bigg)^2\\
+ 2\sum_{k=m}^{n}\frac{1}{(k+1)^2}\E\bigg(\left|Y_{k+1}\right|\bigg|\sum_{j=1}^{k}Y_j\bigg|\bigg) + \sum_{k=m}^{n}\frac{1}{(k+1)^2}\E Y_{k+1}^2.
\end{multline}
First we bound $\E\big(\sum_{i=1}^{k}Y_i\big)^2$. Under assumptions of this lemma, by \thref{lemma_2.18} below, for $i,j\geq 0$
\begin{equation}\label{abschaetzung_covariance}
\left| \Cov \left(Y_i,Y_{i+j}\right)\right|=\left| \E \left(Y_iY_{i+j}\right)\right| \leq 4Ka_{\lfloor \frac{j}{3}\rfloor} + 2K^2\beta_{\lfloor \frac{j}{3}\rfloor} \leq C(a_{\lfloor \frac{j}{3}\rfloor}+\beta_{\lfloor \frac{j}{3}\rfloor}).
\end{equation}
By stationarity of $(Y_j)$,
\[
\left| \E \left(Y_iY_{j}\right)\right| = \left| \Cov \left(Y_i,Y_{j}\right)\right|= \left| \Cov \left(Y_0,Y_{|i-j|}\right)\right|.
\]
Hence, 
\begin{multline*}
\E\Big(\sum_{i=1}^{k}Y_i\Big)^2 = \sum_{i,j=1}^{k}\E\left(Y_iY_j\right)\leq \sum_{i,j=1}^{k}\left| \Cov \left(Y_0,Y_{|i-j|}\right)\right|\\
\leq C\sum_{i,j=1}^{k}(a_{\lfloor \frac{|i-j|}{3}\rfloor}+\beta_{\lfloor \frac{|i-j|}{3}\rfloor})\leq C\sum_{i=1}^{k}\sum_{k=0}^{\infty}(a_{\lfloor \frac{k}{3}\rfloor}+\beta_{\lfloor \frac{k}{3}\rfloor})\leq Ck,
\end{multline*}
by (\ref{abschaetzung_covariance}) and (\ref{condition_appr.const_regu.coeff}). 
Since $|Y_j|\leq K$, then
\begin{equation*}
\E\Big(\left|Y_{k+1}\right|\Big|\sum_{j=1}^{k}Y_j\Big|\Big) \leq K\E\Big(\Big|\sum_{j=1}^{k}Y_j\Big|\Big)\leq K\Big( \E\Big(\sum_{i=1}^{k}Y_i\Big)^2 \Big)^{1/2}\leq C\sqrt{k}.
\end{equation*}
Using these bounds in (\ref{inequality_KL}) together with
\[
\frac{1}{(k+1)^2}\leq \frac{1}{k^2}, \qquad \Big|\frac{1}{(k+1)^2}-\frac{1}{k^2}\Big| \leq \frac{1+2k}{(k+1)^2k^2}\leq \frac{4}{k^3},
\]
we obtain (\ref{hajek_renyi}):
\begin{equation*}
\epsilon^2 \p\bigg(\max_{m\leq k \leq n} \frac{1}{k}\bigg|\sum_{i=1}^{k}Y_i\bigg| >\epsilon\bigg) \leq C\bigg[ \frac{1}{m}+\sum_{k=m}^{n}\frac{1}{k^2} + \sum_{k=m}^{n}\frac{1}{k^{3/2}}\bigg] \leq \frac{C}{\sqrt{m}}.
\end{equation*}

\end{proof}

The next lemma establishes asymptotic bounds of the sums
\begin{equation}\label{s_k}
S_k^{(1)} = \sum_{i=1}^{k}h_{1,n}\left(Y_i\right), \qquad  S_k^{(2)} =\sum_{j=1}^{k}h_{2,n}\left(Y_j\right).
\end{equation}

\begin{lemma}\thlabel{lemma_abschaetzung_h}
Assume that $\left(Y_j\right)$ is a stationary zero mean $L_1$ near epoch dependent process on some absolutely regular process $\left(Z_j\right)$ and (\ref{condition_appr.const_regu.coeff}) holds. Furthermore, let Assumption \ref{assumption_change} be satisfied and $S_k^{(i)}$, $i=1,2$, be as in (\ref{s_k}). Then
\begin{equation}\label{abschaetzung_h1}
\max_{1\leq k \leq n}n^{-1/2}\Big| S_k^{(i)}\Big| = o_P\left(1\right), \qquad i=1,2.
\end{equation}

\end{lemma}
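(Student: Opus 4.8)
The plan is to show that the normalized partial sums of the centered kernel pieces $h_{1,n}$ and $h_{2,n}$ fluctuate negligibly because these kernels are themselves of small order $\Delta_n$. I treat $i=1$; the case $i=2$ is completely analogous, since $h_{2,n}(y)=\F(y+\Delta_n)-\F(y)-\Theta_{\Delta_n}$ has exactly the same structure as $h_{1,n}$ (and the same leading term below). First I would collect the three facts already available for $h_{1,n}$: it is centered, $\E h_{1,n}(Y_1)=0$; it is uniformly small, $\|h_{1,n}\|_{\infty}\le C\Delta_n$ by (\ref{abschaetzung_h1n_kleiner_delta}); and, because $h_{1,n}'(x)=f(x)-f(x-\Delta_n)$ is bounded by $\|f'\|_{\infty}\Delta_n$, the function is Lipschitz with constant $O(\Delta_n)$, so by the $1$-continuity and Proposition 2.11 chain of Section \ref{Properties of the kernel} the process $(h_{1,n}(Y_j))$ is $L_1$, hence (being bounded) $L_2$, near epoch dependent on $(Z_j)$.

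Next I would bound the variance of $S_k^{(1)}$. Applying the covariance estimate (\ref{abschaetzung_covariance}) with $K=\|h_{1,n}\|_{\infty}\le C\Delta_n$ gives $|\Cov(h_{1,n}(Y_0),h_{1,n}(Y_l))|\le C\Delta_n(\sqrt{a_{\lfloor l/3\rfloor}}+\beta_{\lfloor l/3\rfloor})$, which is summable by (\ref{condition_appr.const_regu.coeff}); hence $\E(S_k^{(1)})^2\le C\Delta_n k$ and, by stationarity, $\E(S_b^{(1)}-S_a^{(1)})^2\le C\Delta_n(b-a)$ for all $a\le b$. In particular the long-run variance $\sigma_n^2=\sum_{l}\Cov(h_{1,n}(Y_0),h_{1,n}(Y_l))$ is $O(\Delta_n)\to0$, so at each fixed argument $n^{-1/2}S_{[nt]}^{(1)}\to_p0$. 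The remaining task is to upgrade this pointwise smallness to a statement about the maximum over $k$.

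For that I would reduce the leading part to a fixed process. By a Taylor expansion using the boundedness of $f$ and $f'$ one has $h_{1,n}(x)=\Delta_n\bigl(f(x)-\E f(Y_1)\bigr)+r_n(x)$ with $\E r_n(Y_1)=0$ and $\|r_n\|_{\infty}=O(\Delta_n^2)$, since $\F(x)-\F(x-\Delta_n)=\Delta_n f(x)+O(\Delta_n^2)$ and $\Theta_{\Delta_n}=\Delta_n\int f^2+O(\Delta_n^2)=\Delta_n\E f(Y_1)+O(\Delta_n^2)$ by (\ref{remark_theta_delta_n}). The main term contributes $\Delta_n\,n^{-1/2}\max_{1\le k\le n}\big|\sum_{i=1}^{k}(f(Y_i)-\E f(Y_1))\big|$, and here $(f(Y_j))$ is a single bounded, Lipschitz, hence $L_2$ near epoch dependent, process, to which the functional central limit theorem of Section \ref{Properties of the kernel} (Corollary 3.2 of \citet{wooldridge.1988}) applies; the resulting weak convergence in the sup-norm gives $n^{-1/2}\max_{1\le k\le n}\big|\sum_{i=1}^{k}(f(Y_i)-\E f(Y_1))\big|=O_P(1)$, which multiplied by $\Delta_n\to0$ is $o_P(1)$.

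The hard part is the uniform control of the triangular remainder $n^{-1/2}\max_{1\le k\le n}\big|\sum_{i=1}^{k}r_n(Y_i)\big|$. The $1/k$-weighted Hájek--R\'enyi bound of \thref{hajek_renyi_inequality} governs $\max_k k^{-1}|S_k|$ and does not convert, without loss of a factor $n$, into the constant-weight maximum $n^{-1/2}\max_k|S_k|$; and a M\'oricz--Serfling maximal inequality would cost a $(\log n)^2$ factor, which is inadmissible because Assumption \ref{assumption_change} lets $\Delta_n$ tend to $0$ arbitrarily slowly. The clean route is to note $S^{(1)}_{[nt]}=\sqrt n\,\Gamma_n(t,\Delta_n)$, where $\Gamma_n(t,\delta)=n^{-1/2}\sum_{i=1}^{[nt]}\bigl(\F(Y_i)-\F(Y_i-\delta)-\E[\F(Y_1)-\F(Y_1-\delta)]\bigr)$ is the sequential smoothed-empirical process; this array is $L_2$ near epoch dependent and tight on $[0,1]\times[0,\delta_0]$ with $\Gamma_n(\cdot,0)\equiv0$, so $\Delta_n\to0$ forces $\sup_t|\Gamma_n(t,\Delta_n)|\to_p0$, i.e. the claim. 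Establishing this tightness, equivalently a logarithm-free maximal inequality for the $L_2$ near epoch dependent partial sums with the $O(\Delta_n)$ variance proxy, is where the genuine work lies and is what I expect to be the main obstacle.
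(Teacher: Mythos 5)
Your reduction correctly isolates the difficulty, but the proof is incomplete precisely where the lemma lives: the logarithm-free maximal inequality that you defer as ``the main obstacle'' is the entire content of the statement, and nothing in your argument supplies it. The variance bound $\E(S_k^{(1)})^2\le C\Delta_n k$, the Taylor split $h_{1,n}(x)=\Delta_n\bigl(f(x)-\E f(Y_1)\bigr)+r_n(x)$ and the FCLT for $(f(Y_j))$ together control only the main term; the maximum of the remainder is untouched, and the trivial bound $n^{-1/2}\max_{1\le k\le n}\big|\sum_{i=1}^k r_n(Y_i)\big|\le \sqrt{n}\,\|r_n\|_\infty=O(\sqrt{n}\Delta_n^2)$ need not vanish under Assumption \ref{assumption_change} (take $\Delta_n=n^{-1/8}$). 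Likewise, tightness of your sequential process $\Gamma_n(t,\delta)$ is not easier than the lemma itself; proving it would require exactly the increment-to-maximum bound you are missing. So the proposal identifies the obstacle but does not overcome it.

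The tool that closes the gap is already stated in the paper, and it makes your decomposition unnecessary. The process $(h_{1,n}(Y_j))$ is stationary, zero mean, bounded by $K_n=C\Delta_n$ by (\ref{abschaetzung_h1n_kleiner_delta}), and $L_1$ NED with approximation constants $C\sqrt{a_k}$ satisfying (\ref{condition_appr.const_regu.coeff}); hence the moment inequality of \thref{lemma_2.18} (Lemma 2.24 of Borovkova et al.), whose constant scales as $C_0K_n^2$, gives $\E\big|\sum_{i=1}^{m}h_{1,n}(Y_i)\big|^4\le Cm^2\Delta_n^2$ for all $m\ge1$, with $C$ independent of $m$ and $n$. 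By Markov's inequality this verifies condition (\ref{condition_prop_billingsley}) of \thref{Billingsley} for $S_k=n^{-1/2}\sum_{i=1}^{k}h_{1,n}(Y_i)$ with $\beta=4$, $\alpha=2$ and $u_{n,i}=C^{1/2}\Delta_n/n$, and since $\alpha=2>1$ the conclusion (\ref{result_prop_billingsley}) is exactly the logarithm-free maximal inequality you were seeking: $\p\big(\max_{1\le k\le n}n^{-1/2}|S_k^{(1)}|\ge\epsilon\big)\le C'\epsilon^{-4}\Delta_n^2\rightarrow 0$, since $\Delta_n\rightarrow 0$. The same two lines apply verbatim to $h_{2,n}$. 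The idea you missed is to work with a \emph{fourth}-moment increment bound rather than a second-moment one: the factor $\Delta_n^2$ enters through $K_n^2$, and the exponent $\alpha=2>1$ is what lets Billingsley's Theorem 10.2 convert increment bounds into a bound on the maximum with no logarithmic loss --- no FCLT, no Taylor expansion, and no empirical-process tightness argument is needed.
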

 
\begin{proof}
To show (\ref{abschaetzung_h1}) for $i=1$, we will use the inequality given in \thref{Billingsley}. Define
$S_k = \sum_{i=1}^{k}n^{-1/2} h_{1,n}\left(Y_i\right)$, $k\geq 1$, and set $S_0=0$. We need to evaluate $\E( S_l-S_k )^4 $ for $1\leq k < l \leq n$. Note that
\begin{equation*}
\E \left( S_l-S_k \right)^4 
 = n^{-2}\E \bigg|\sum_{i=k+1}^{l} h_{1,n}\left(Y_i\right)\bigg|^4 = n^{-2}\E \bigg|\sum_{i=1}^{l-k} h_{1,n}\left(Y_i\right)\bigg|^4,
\end{equation*}
where the last equality holds because $(h_{1,n}(Y_j))$ is a stationary process. Since $(h_{1,n}(Y_j))$ is $L_1$ NED on an absolutely regular process, see Section \ref{Properties of the kernel}, $\E h_{1,n}(Y_0)=0$ and $|h_{1,n}(x)|\leq C\Delta_n$ by (\ref{abschaetzung_h1n_kleiner_delta}), then by \thref{lemma_2.18} and the comment below
\[
\E \bigg|\sum_{i=1}^{l-k}h_{1,n}(Y_i)\bigg|^4 \leq C(l-k)^2\Delta_n^2,
\]
where $C$ does not depend on $l$, $k$ or $n$. Thus, 
\begin{equation*}
\p(|S_l-S_k|\geq \lambda) \leq \frac{1}{\lambda^4}\E|S_l-S_k|^4 \leq \frac{C(l-k)^2\Delta_n^2}{\lambda^4n^2} = \frac{1}{\lambda^4}\bigg(\sum_{i=k+1}^{l}u_{n,i}\bigg)^2,
\end{equation*}
where $u_{n,i} = C^{1/2} \Delta_n n^{-1}$. Hence, $S_j$ satisfies assumption (\ref{condition_prop_billingsley}) of \thref{Billingsley} with $\beta=4$, $\alpha=2$. Therefore, by (\ref{result_prop_billingsley}), for any fixed $\epsilon >0$, as $n\rightarrow\infty$,
\begin{align*}
\p\bigg(\max_{1\leq k \leq n}n^{-1/2} \Big|S_k^{(1)}\Big| \geq \epsilon\bigg) \leq \frac{K}{\epsilon^4}\bigg(\sum_{i=1}^{n} u_{n,i} \bigg)^2 = \frac{KC\Delta_n^2}{\epsilon^4} \rightarrow 0,
\end{align*}
since $\Delta_n\rightarrow 0$. The proof of (\ref{abschaetzung_h1}) for $i=2$ follows using a similar argument as in the proof for $i=1$.
\end{proof}

\begin{prop}\thlabel{Lemma_abschaetzung_bp}
Assume that $\left(Y_j\right)$ is $L_1$ near epoch dependent process on some absolutely regular process $\left(Z_j\right)$ and (\ref{condition_appr.const_regu.coeff}) holds. Furthermore, let Assumption \ref{assumption_change} be satisfied. Then
\begin{equation} \label{Lemma_abschaetzung_1}
\max_{1\leq k \leq k^*}n^{-3/2}\bigg| U_n(k,k^*) -k\left(n-k^*\right)\Theta_{\Delta_n} \bigg| = o_P\left(1\right)
\end{equation}
and 
\begin{equation} \label{Lemma_abschaetzung_2}
\max_{k^*\leq k \leq n}n^{-3/2}\bigg| U_n(k^*,k) -k^*\left(n-k\right)\Theta_{\Delta_n}  \bigg| = o_P\left(1\right),
\end{equation}
where $\Theta_{\Delta_n}$ is the same as in (\ref{remark_theta_delta_n}).
\end{prop}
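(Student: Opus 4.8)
The plan is to expand $U_n(k,k^*)$ through the Hoeffding decomposition (\ref{hoeffding_decomposition_hn}) of the kernel $h_n$ and to show that, once the deterministic main term $k(n-k^*)\Theta_{\Delta_n}$ has been removed, every remaining piece is $o_P(n^{3/2})$ uniformly in $k$. Summing (\ref{hoeffding_decomposition_hn}) over the two blocks gives
\begin{align*}
U_n(k,k^*) &= \sum_{i=1}^{k}\sum_{j=k^*+1}^{n} h_n(Y_i,Y_j)\\
&= k(n-k^*)\Theta_{\Delta_n} + (n-k^*)\,S_k^{(1)} + k\big(S_n^{(2)}-S_{k^*}^{(2)}\big) + G_n(k,k^*),
\end{align*}
where $S_k^{(1)},S_k^{(2)}$ are the sums in (\ref{s_k}) and $G_n(k,k^*)=\sum_{i=1}^{k}\sum_{j=k^*+1}^{n} g_n(Y_i,Y_j)$ collects the degenerate kernel. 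So (\ref{Lemma_abschaetzung_1}) reduces to controlling the two linear terms and the degenerate term.

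The two linear terms are immediate from \thref{lemma_abschaetzung_h}. Using $n-k^*\le n$ and $k\le k^*\le n$,
$$n^{-3/2}\max_{1\le k\le k^*}(n-k^*)\big|S_k^{(1)}\big|\le n^{-1/2}\max_{1\le k\le n}\big|S_k^{(1)}\big| = o_P(1),$$
and likewise the $h_{2,n}$ contribution is bounded by $2n^{-1/2}\max_{1\le k\le n}\big|S_k^{(2)}\big|=o_P(1)$. Thus neither linear term affects the order.

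The core of the argument is the degenerate part, for which I would prove $n^{-3/2}\max_{1\le k\le k^*}|G_n(k,k^*)|=o_P(1)$ along the same maximal-inequality route as in \thref{lemma_abschaetzung_h}. Writing $G_n(k,k^*)=\sum_{i=1}^{k}R_i$ with $R_i=\sum_{j=k^*+1}^{n} g_n(Y_i,Y_j)$ exhibits this as a partial sum in $i$, and I would establish the increment bound
$$\E\big|G_n(l,k^*)-G_n(k,k^*)\big|^{4}\le C(l-k)^2\big((n-k^*)\Delta_n\big)^2,\qquad 1\le k<l\le k^*.$$
Two ingredients drive this. First, the pointwise size $\E g_n(Y_i,Y_j)^2\le C\Delta_n$ for every $i\ne j$: since $\E h_n(Y_i,Y_j)^2=\p(Y_j<Y_i\le Y_j+\Delta_n)\le C\Delta_n$ by (\ref{Bed_gem_Verteilung}), while $\|h_{1,n}\|_\infty,\|h_{2,n}\|_\infty\le C\Delta_n$ and $\Theta_{\Delta_n}=O(\Delta_n)$ by (\ref{abschaetzung_h1n_kleiner_delta}) and (\ref{remark_theta_delta_n}). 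Second, the degeneracy $\E g_n(x,Y_1)=\E g_n(Y_1,y)=0$, which annihilates the leading off-diagonal configurations in both index pairs $(i,i')$ and $(j,j')$, leaving only near-diagonal contributions that are summable by (\ref{condition_appr.const_regu.coeff}) and the $1$-continuity of $g_n$ from Section \ref{Sub_1-continuity_h_hn}. With this bound, $\tilde S_k:=n^{-3/2}G_n(k,k^*)$ satisfies assumption (\ref{condition_prop_billingsley}) of \thref{Billingsley} with $\beta=4$, $\alpha=2$ and $u_{n,i}=C^{1/2}\Delta_n n^{-2}$, so by (\ref{result_prop_billingsley}),
$$\p\Big(\max_{1\le k\le k^*}n^{-3/2}|G_n(k,k^*)|\ge\epsilon\Big)\le \frac{K}{\epsilon^4}\Big(\sum_{i=1}^{k^*}u_{n,i}\Big)^2\le \frac{KC\,\Delta_n^2}{\epsilon^4\,n^2}\longrightarrow 0.$$

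The hard part is precisely the fourth-moment bound for the \emph{degenerate} two-sample U-statistic $G_n$: one must propagate the degeneracy and the $O(\Delta_n)$ kernel size through a genuinely two-index dependent moment, verifying that the off-diagonal sums over $(i,i')$ and $(j,j')$ are governed by the mixing and approximation series of (\ref{condition_appr.const_regu.coeff}) rather than by the crude bound $|g_n|\le C$; the small bias of $G_n$ coming from pairs with $j-i$ small is itself only $O(1)$ after summation and hence negligible at rate $n^{3/2}$. Finally, the second assertion (\ref{Lemma_abschaetzung_2}) is entirely symmetric: expanding $U_n(k^*,k)=k^*(n-k)\Theta_{\Delta_n}+(n-k)S_{k^*}^{(1)}+k^*\big(S_n^{(2)}-S_k^{(2)}\big)+G_n(k^*,k)$, the linear terms are again dispatched by \thref{lemma_abschaetzung_h}, while $G_n(k^*,k)=\sum_{j=k+1}^{n}\sum_{i=1}^{k^*}g_n(Y_i,Y_j)$ is now a partial sum in the upper index $j$, to which the identical moment bound and \thref{Billingsley} apply.
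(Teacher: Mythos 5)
Your decomposition and your treatment of the two linear terms coincide with the paper's proof: both expand $U_n(k,k^*)$ via the Hoeffding decomposition (\ref{hoeffding_decomposition_hn}) and dispatch $(n-k^*)S_k^{(1)}$ and $k\big(S_n^{(2)}-S_{k^*}^{(2)}\big)$ using \thref{lemma_abschaetzung_h}. The problem is the degenerate part. There you replace the paper's argument by a fourth-moment/Billingsley route whose key ingredient, the increment bound
\begin{equation*}
\E\big|G_n(l,k^*)-G_n(k,k^*)\big|^{4}\leq C(l-k)^2\big((n-k^*)\Delta_n\big)^2 ,
\end{equation*}
is asserted rather than proved --- you yourself flag it as ``the hard part.'' This is a genuine gap, and not a small one: no result in the paper's toolkit delivers such a bound. \thref{Lemma_DFGW} gives only \emph{second}-moment bounds for degenerate sums (and without any $\Delta_n$ factor), while the fourth-moment inequality (\ref{lemma_2.24_result}) of \thref{lemma_2.18} applies to single-index partial sums of NED variables, not to a genuinely two-index degenerate U-statistic. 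Moreover, your heuristic that degeneracy ``annihilates the off-diagonal configurations'' is exact only for independent data; under $L_1$ NED the standard coupling/approximation corrections are controlled by $\|g_n\|_{\infty}\leq C$ and the $1$-continuity modulus $\phi(\epsilon)=C\epsilon$ together with $\beta_k$, $a_k$, and these correction terms do not automatically inherit the $\Delta_n$ factor, so even the validity of the stated bound (for small $l-k$) is unclear without substantial new work.

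The paper avoids any new moment computation by a simple algebraic trick you could adopt to close the gap: for $k\leq k^*$,
\begin{equation*}
\sum_{i=1}^{k}\sum_{j=k^*+1}^{n}g_n(Y_i,Y_j)=\sum_{i=1}^{k}\sum_{j=k+1}^{n}g_n(Y_i,Y_j)-\sum_{i=1}^{k}\sum_{j=k+1}^{k^*}g_n(Y_i,Y_j),
\end{equation*}
which writes the two-block (rectangular) sum as a difference of two \emph{contiguous} split sums, each of the exact form covered by \thref{Proposition_DFGW} (Proposition 1 of \citet{Dehling.2015}) for bounded $1$-continuous degenerate kernels. Applying that proposition to each piece (the second one on the first $k^*$ observations, using $n^{-3/2}\leq (k^*)^{-3/2}$ up to a constant) yields $\max_{1\leq k\leq k^*}n^{-3/2}\big|U_n^{(g)}(k,k^*)\big|=o_P(1)$ directly, with the symmetric argument giving (\ref{Lemma_abschaetzung_2}). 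Note also that your $\Delta_n$-refined bound buys nothing here: the crude rate from \thref{Proposition_DFGW} already suffices, so the extra sharpness you are trying to extract is effort spent on the least forgiving part of the problem.
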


\begin{proof}
By the Hoeffding decomposition (\ref{hoeffding_decomposition_hn}),
\begin{equation*}
h_n\left(x,y\right) - \Theta_{\Delta_n} = h_{1,n}\left(x\right) + h_{2,n}\left(y\right) + g_n\left(x,y\right).
\end{equation*}
Hence, 
\begin{multline*}
U_n(k,k^*)-k(n-k^*)\Theta_{\Delta_n} = \sum_{i=1}^{k}  \sum_{j=k^*+1}^{n} \big(h_{1,n}\left(Y_i\right) + h_{2,n}\left(Y_j\right) + g_n\left(Y_i,Y_j\right)\big)\\
= (n-k^*)\sum_{i=1}^{k}h_{1,n}\left(Y_i\right) + k \sum_{j=k^*+1}^{n}h_{2,n}\left(Y_j\right) + \sum_{i=1}^{k}  \sum_{j=k^*+1}^{n}g_n\left(Y_i,Y_j\right).
\end{multline*}
Denote
\begin{align}
U_n^{(g)}(k,k^*)&=\sum_{i=1}^{k}  \sum_{j=k^*+1}^{n}g_n\left(Y_i,Y_j\right),  &U_n^{(g)}(k^*,k)&=\sum_{i=1}^{k^*}  \sum_{j=k+1}^{n}g_n\left(Y_i,Y_j\right).\label{U_n_g}
\end{align}
Since $|n-k^*|\leq n$, $k^*\leq n$ and $\sum_{i=k^*+1}^{n}h_{2,n}(Y_j)=S_n^{(2)}-S_{k^*}^{(2)}$, then
\begin{align*}
\Big|U_n(k,k^*)-k(n-k^*)\Theta_{\Delta_n}\Big| &\leq n\Big(\big|S_k^{(1)}\big|+\big|S_{n}^{(2)}\big|+\big|S_{k^*}^{(2)}\big|\Big)+\Big|U_n^{(g)}(k,k^*)\Big|,\\
\Big|U_n(k^*,k)-k^*(n-k)\Theta_{\Delta_n}\Big| &\leq n\Big(\big|S_{k^*}^{(1)}\big|+\big|S_{n}^{(2)}\big|+\big|S_{k}^{(2)}\big|\Big)+\Big|U_n^{(g)}(k^*,k)\Big|,
\end{align*}
where $S_{k}^{(i)}$, $i=1,2$ are defined in (\ref{s_k}). Therefore,
\begin{multline}\label{abschaetzung_Lemma_abschaetzung_1}
\max_{1\leq k \leq k^*}n^{-3/2}\Big|U_n(k,k^*)-k(n-k^*)\Theta_{\Delta_n}\Big|\\
 \leq \max_{1\leq k \leq n}n^{-1/2}\Big(\big|S_k^{(1)}\big|+\big|S_{n}^{(2)}\big|+\big|S_{k^*}^{(2)}\big|\Big) + \max_{1\leq k \leq k^*}n^{-3/2}\Big|U_n^{(g)}(k,k^*)\Big|.
\end{multline}
The degenerate kernel $g_n$ is bounded and $1$-continuous, see Subsections \ref{Sub_U-statistics and Hoeffding decomposition} and \ref{Sub_1-continuity_h_hn}. Thus, by \thref{Proposition_DFGW} below, 
\begin{multline}\label{abschaetzung_gn_1}
\max_{1\leq k \leq k^*}n^{-3/2}\bigg| U_n^{(g)}(k,k^*)\bigg|\\
\leq \max_{1\leq k \leq n}n^{-3/2}\bigg|\sum_{i=1}^{k}\sum_{j=k+1}^{n}g_n(Y_i,Y_j)\bigg|+\max_{1\leq k \leq k^*}n^{-3/2}\bigg|\sum_{i=1}^{k}\sum_{j=k+1}^{k^*}g_n(Y_i,Y_j)\bigg|
 =o_P(1).
\end{multline}
Similar argument implies
\begin{equation*}
\max_{k^*< k \leq n}n^{-3/2}\bigg| U_n^{(g)}(k^*,k)\bigg| =o_P(1).
\end{equation*}
Using in (\ref{abschaetzung_Lemma_abschaetzung_1}) the bounds (\ref{abschaetzung_gn_1}) and (\ref{abschaetzung_h1}) of \thref{lemma_abschaetzung_h}  we obtain
\[
\max_{1\leq k \leq k^*}n^{-3/2}\Big|U_n(k,k^*)-k(n-k^*)\Theta_{\Delta_n}\Big| = o_P(1)
\]
which proves (\ref{Lemma_abschaetzung_1}). The proof of (\ref{Lemma_abschaetzung_2}) follows using similar argument.
\end{proof}

Denote
\begin{equation}\label{tilde_U_n_g}
\tilde{U}_n^{(g)}(k) = \sum_{i=1}^{k}\sum_{j=k+1}^{n}g(Y_i,Y_j).
\end{equation}

\begin{lemma}\thlabel{hajek_renyi_g}
Assume that $\left(Y_j\right)$ is $L_1$ near epoch dependent process on some absolutely regular process $\left(Z_j\right)$ and (\ref{condition_appr.const_regu.coeff}) holds. Furthermore, let Assumption \ref{assumption_change} be satisfied and let $a(n) = M/\Delta_n^2$, $M>0$, and  $\tilde{U}_n^{(g)}(k)$,  $U_n^{(g)}(k,k^*)$ and $U_n^{(g)}(k^*,k)$ are defined as in (\ref{tilde_U_n_g}), (\ref{U_n_g}). Then there exists $C>0$ such that for any $\epsilon>0$,
\begin{equation}\label{hajek_renyi_g_1}
\p\bigg(\max_{k:|k-k^*|\geq a(n)}\Big|\frac{\tilde{U}_n^{(g)}(k)}{k^*-k}\Big|>\epsilon\bigg) \leq \frac{C}{\epsilon^2}\Big(\frac{n^2}{a(n)}+\frac{1}{n}\Big),
\end{equation}
\begin{equation}\label{hajek_renyi_g_2}
\p\bigg(\max_{1\leq k \leq k^*-a(n)}\Big|\frac{U_n^{(g)}(k,k^*)}{k^*-k}\Big|>\epsilon\bigg) \leq \frac{C}{\epsilon^2}\Big(\frac{n^2}{a(n)}+\frac{1}{n}\Big),
\end{equation}
\begin{equation*}
\p\bigg(\max_{k^*+a(n)\leq k \leq n}\Big|\frac{U_n^{(g)}(k^*,k)}{k-k^*}\Big|>\epsilon\bigg) \leq \frac{C}{\epsilon^2}\Big(\frac{n^2}{a(n)}+\frac{1}{n}\Big),
\end{equation*}
where $C$ does not depend on $\epsilon$, $n$ and $a(n)$.
\end{lemma}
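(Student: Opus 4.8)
The three inequalities are all instances of a single scheme: rewrite each weighted maximum as a weighted maximum of partial sums and feed it into the H\'ajek--R\'enyi inequality \thref{theorem_KL}, exactly as in the proof of \thref{hajek_renyi_inequality}. Throughout I split the index set $\{k:|k-k^*|\ge a(n)\}$ into $\{1\le k\le k^*-a(n)\}$ and $\{k^*+a(n)\le k\le n\}$ and treat the first, the second being symmetric. It is worth noting that (\ref{hajek_renyi_g_2}) and the third display are the simpler cases: in $U_n^{(g)}(k,k^*)=\sum_{i=1}^{k}\sum_{j=k^*+1}^{n}g_n(Y_i,Y_j)$ from (\ref{U_n_g}) only the outer limit moves with $k$, so $U_n^{(g)}(k,k^*)=\sum_{i=1}^{k}\zeta_i$ is a genuine partial sum of $\zeta_i:=\sum_{j=k^*+1}^{n}g_n(Y_i,Y_j)$. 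By contrast, in $\tilde U_n^{(g)}(k)$ from (\ref{tilde_U_n_g}) both summation limits move with $k$, so I regard (\ref{hajek_renyi_g_1}) as the representative hard case and describe it in detail.

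The engine of the proof is two second--moment estimates, both obtained from the degeneracy of $g$ established in Subsection \ref{Sub_1-continuity_h_hn}, namely $\E g(x,Y_1)=\E g(Y_1,y)=0$. Degeneracy forces the cancellation of all covariances except the ``matched'' ones, and the surviving covariances of the $g(Y_i,Y_j)$ are then controlled by the covariance and moment bounds for $L_1$ NED functionals of \thref{lemma_2.18}, together with the summability (\ref{condition_appr.const_regu.coeff}) and $\|g\|_\infty<\infty$. In this way I would establish, first, the bound $\E[\tilde U_n^{(g)}(k^*)]^2\le Cn^2$ for the full degenerate $U$--statistic, and second, the linear--in--gap increment bound
\[
\E\big[\tilde U_n^{(g)}(k^*)-\tilde U_n^{(g)}(k)\big]^2\le C(k^*-k)\,n,\qquad 1\le k\le k^*,
\]
which comes from writing the difference as a sum over the two thin index rectangles $[1,k]\times[k+1,k^*]$ and $[k+1,k^*]\times[k^*+1,n]$ and again invoking degeneracy.

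With these in hand I would decompose, for $1\le k\le k^*-a(n)$,
\[
\frac{\big|\tilde U_n^{(g)}(k)\big|}{k^*-k}\le \frac{\big|\tilde U_n^{(g)}(k^*)\big|}{k^*-k}+\frac{\big|\tilde U_n^{(g)}(k^*)-\tilde U_n^{(g)}(k)\big|}{k^*-k}.
\]
For the first term $k^*-k\ge a(n)$ is deterministic, so a single Markov inequality with $\E[\tilde U_n^{(g)}(k^*)]^2\le Cn^2$ gives a contribution of order $\epsilon^{-2}n^2/a(n)^2$. For the second term I reindex by $l=k^*-k$, so that $S_l:=\tilde U_n^{(g)}(k^*-l)-\tilde U_n^{(g)}(k^*)$ is a partial sum of one--step increments with $S_0=0$ and $\E S_l^2\le Cln$, and the quantity becomes $\max_{a(n)\le l\le k^*}|S_l|/l$. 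This is precisely the shape to which (\ref{inequality_KL}) of \thref{theorem_KL} applies; substituting $\E S_l^2\le Cln$ into the four terms of (\ref{inequality_KL}), bounding the mixed term by Cauchy--Schwarz, and summing the resulting series yields a contribution that is dominated by $\epsilon^{-2}n^2/a(n)$, with a lower--order $\epsilon^{-2}/n$ arising from the boundary terms of (\ref{inequality_KL}). Since $n^2/a(n)^2$ and $n/a(n)$ are both dominated by $n^2/a(n)$ for $1\le a(n)\le n$, adding the two contributions gives (\ref{hajek_renyi_g_1}). For (\ref{hajek_renyi_g_2}) and the third display the same argument applies directly to the partial sums $\sum_{i=1}^{k}\zeta_i$, using $\E\zeta_i^2\le Cn$ and, by degeneracy of $g_n$, the near--uncorrelatedness of the $\zeta_i$.

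The main obstacle is the increment estimate $\E[\tilde U_n^{(g)}(k^*)-\tilde U_n^{(g)}(k)]^2\le C(k^*-k)n$ in the dependent setting. For i.i.d.\ observations degeneracy makes the relevant increments uncorrelated and the bound is immediate; under $L_1$ near epoch dependence one must instead show that the covariances that degeneracy does not annihilate decay quickly enough both in the time separations $|i-i'|$, $|j-j'|$ inside the double sums and across the two rectangles, so that their total is still linear in $k^*-k$ rather than quadratic. Making this quantitative is exactly where the strength of the summability condition $\sum_k k^2(\beta_k+\sqrt{a_k})<\infty$ and the moment machinery of \thref{lemma_2.18} are needed, and it is the step that must be carried out with care.
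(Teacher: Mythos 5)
Your proposal is correct and rests on the same engine as the paper's proof: both reduce the weighted maxima to the Kokoszka--Leipus inequality (\thref{theorem_KL}) applied to the degenerate parts, fed with exactly two second-moment inputs, namely $\E\big[\tilde{U}_n^{(g)}(k)\big]^2\le Ck(n-k)$ and the linear-in-gap increment bound $\E\big[\tilde{U}_n^{(g)}(k)-\tilde{U}_n^{(g)}(m)\big]^2\le Cn(k-m)$. There are two differences worth noting. The first is organizational: the paper applies \thref{theorem_KL} directly to the forward partial sums $\tilde{U}_n^{(g)}(k)=\sum_{i\le k}R_i$ with weights $c_k=1/(k^*-k)$, whereas you first split off $\tilde{U}_n^{(g)}(k^*)$ by the triangle inequality (handled by Markov, contributing $n^2/a(n)^2$) and then apply \thref{theorem_KL} to the backward-anchored sums $S_l=\tilde{U}_n^{(g)}(k^*-l)-\tilde{U}_n^{(g)}(k^*)$ with weights $1/l$; your extra terms $n^2/a(n)^2$ and $n/\sqrt{a(n)}$ are absorbed into $n^2/a(n)$ since one may assume $a(n)\le n$ (otherwise the maximum runs over an empty set), so both routes deliver (\ref{hajek_renyi_g_1})--(\ref{hajek_renyi_g_2}). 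The second difference is the important one: the step you single out as ``the main obstacle'' --- establishing the increment bound $\E\big[\tilde{U}_n^{(g)}(k^*)-\tilde{U}_n^{(g)}(k)\big]^2\le C(k^*-k)n$ for NED data from degeneracy plus \thref{lemma_2.18} --- does not need to be carried out at all. Both moment bounds are precisely the content of \thref{Lemma_DFGW} (Lemmas 1 and 2 of \citet{Dehling.2015}), which the paper simply cites; with that citation your argument closes with no hard work remaining. (One small caveat applies equally to you and to the paper: for (\ref{hajek_renyi_g_2}) and the third display, the two summation ranges split at different points, so one needs the rectangular-block variant $\E\big[U_n^{(g)}(k,k^*)\big]^2\le Ck(n-k^*)$ of \thref{Lemma_DFGW}, which follows from the same proof but is not literally its statement.)
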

\par 
\begin{proof}
Recall $\{k:|k-k^*|\geq a(n)\}=\{k\leq k^*-a(n)\}\cup\{k\geq k^*+a(n)\}$. We consider only the case $\max_{1\leq k \leq k^*-a(n)}$ since the proof for $\max_{k^*+a(n)\leq k\leq n}$ is similar.\par

\textit{Proof of (\ref{hajek_renyi_g_1}).} Define $R_k = \tilde{U}_n^{(g)}(k)-\tilde{U}_n^{(g)}(k-1)$, $k\geq 1$, $\tilde{U}_n^{(g)}(0)=0$ and $R_0=0$. Then $\tilde{U}_n^{(g)}(k)=\sum_{i=1}^k R_i$. Inequality (\ref{inequality_hr}) of \thref{theorem_KL}, applied to the random variables $R_i$ with $c_k=1/(k^*-k)$ yields
\begin{multline}\label{zz_hajek_renyi_g}
\rho_n := \epsilon^2 \p\bigg(\max_{1\leq k \leq k^*-a(n)} \frac{1}{k^*-k}\bigg|\sum_{i=1}^{k}R_i\bigg| >\epsilon\bigg)\\
 \leq \frac{1}{(k^*-1)^2}\E R_1^2 + \sum_{k=1}^{k^*-a(n)}\bigg|\frac{1}{(k^*-k-1)^2}-\frac{1}{(k^*-k)^2}\bigg|\E\bigg(\sum_{i=1}^{k}R_i\bigg)^2\\
+ 2\sum_{k=1}^{k^*-a(n)}\frac{1}{(k^*-k-1)^2}\E\bigg(\left|R_{k+1}\right|\bigg|\sum_{j=1}^{k}R_j\bigg|\bigg) + \sum_{k=1}^{k^*-a(n)}\frac{1}{(k^*-k-1)^2}\E  R_{k+1}^2.
\end{multline}
In Subsections \ref{Sub_U-statistics and Hoeffding decomposition} and \ref{Sub_1-continuity_h_hn}, we showed that kernel function $g(x,y)$ is bounded and $1$-continuous. Therefore, by \thref{Lemma_DFGW} below
\begin{equation}\label{hr_g_1}
\E\Big[ (\tilde{U}_n^{(g)}(k))^2\Big]=\E\bigg(\sum_{i=1}^{k}R_i\bigg)^2 \leq Ck(n-k), \qquad k=1,\ldots,n.
\end{equation}
\thref{Lemma_DFGW} also yields
\begin{equation}\label{hr_g_2}
\E R_{k+1}^2 = \E\left(\tilde{U}_n^{(g)}(k+1)-\tilde{U}_n^{(g)}(k)\right)^2 \leq n^3C\frac{(k+1)-k}{n^2} = Cn, \qquad k=1,\ldots,n.
\end{equation}
Then,
\begin{equation*}
\E\bigg(\left|R_{k+1}\right|\bigg|\sum_{j=1}^{k}R_j\bigg|\bigg) \leq \bigg(\E R_{k+1}^2 \bigg)^{1/2}\bigg(\E \Big(\sum_{j=1}^{k}R_j\Big)^2 \bigg)^{1/2}\leq C\sqrt{n}\sqrt{k(n-k)}.
\end{equation*}
From (\ref{zz_hajek_renyi_g}), (\ref{hr_g_1}) and (\ref{hr_g_2}), using $\frac{1}{(k^*-k-1)^2}-\frac{1}{(k^*-k)^2}\leq \frac{2}{(k^*-k-1)^3}$, we obtain
\begin{equation*}
\rho_n \leq C\bigg[ \frac{n-1}{(k^*-1)^2} + \sum_{k=1}^{k^*-a(n)}\bigg\{\frac{k(n-k)}{(k^*-k-1)^3}
+ \frac{\sqrt{n}\sqrt{k(n-k)}+n}{(k^*-k-1)^2}\bigg\}\bigg] .
\end{equation*}
Noting that $\sqrt{k(n-k)}\leq n$, $(k^*-k-1)^{-3}\leq (k^*-k-1)^{-2}$, it follows 
\begin{equation*}
\rho_n \leq C\bigg( \frac{1}{n}+ \sum_{k=1}^{k^*-a(n)}\frac{n^2}{(k^*-k-1)^2}\bigg) \leq C\Big(\frac{1}{n}+\frac{n^2}{a(n)}\Big).
\end{equation*}
\textit{Proof of (\ref{hajek_renyi_g_2}).} It follows a similar line to the proof of (\ref{hajek_renyi_g_1}). Denote $\tilde{R}_k=U_n^{(g)}(k,k^*)-U_n^{(g)}(k-1,k^*)$. We verified in Subsections \ref{Sub_U-statistics and Hoeffding decomposition} and \ref{Sub_1-continuity_h_hn} that function $g_n(x,y)$ is bounded and $1$-continuous. Therefore, by \thref{Lemma_DFGW} below,
\begin{equation*}
\E \Big[(U_n^{(g)}(k,k^*))^2\Big]=\E\bigg(\sum_{i=1}^{k}\tilde{R}_i\bigg)^2 \leq Ck(n-k^*), \qquad k=1,\ldots,k^*
\end{equation*}
and 
\begin{multline*}
\E \tilde{R}_{k+1}^2 = \E\left(U_n^{(g)}(k+1,k^*)-U_n^{(g)}(k,k^*)\right)^2\\ 
= \E\Big(\sum_{j=k^*+1}^{n}g_n(Y_{k+1},Y_j)\Big)^2 \leq C(n-k^*), \qquad k=1,\ldots,k^*.
\end{multline*}
Combining both bounds, we obtain
\begin{equation*}
\E\bigg(\left|\tilde{R}_{k+1}\right|\bigg|\sum_{j=1}^{k}\tilde{R}_j\bigg|\bigg) \leq \bigg(\E \tilde{R}_{k+1}^2 \bigg)^{1/2}\bigg(\E \Big(\sum_{j=1}^{k}\tilde{R}_j\Big)^2 \bigg)^{1/2}\leq C(n-k^*)\sqrt{k}.
\end{equation*}
Using the same argument as in the proof of (\ref{hajek_renyi_g_1}), we obtain
\[
\p\bigg(\max_{1\leq k \leq k^*-a(n)}\frac{1}{k^*-k}\Big|U_n^{(g)}(k,k^*)\Big|>\epsilon\bigg) \leq \frac{C}{\epsilon^2}\Big(\frac{1}{n}+\frac{n^2}{a(n)}\Big).
\]
This completes proof of (\ref{hajek_renyi_g_2}) and the lemma.
\end{proof}

\begin{lemma}\thlabel{Lemma_abschaetzung_gewichtetes_maximum}
Assume that $\left(Y_j\right)$ is a stationary zero mean $L_1$ near epoch dependent process on some absolutely regular process $\left(Z_j\right)$ and (\ref{condition_appr.const_regu.coeff}) holds. Furthermore, let Assumption \ref{assumption_change} be satisfied and let $a(n) = M/\Delta_n^2$, $M>0$. Then, as $n\rightarrow\infty$, $M\rightarrow\infty$,\par \bigskip
\textit{(i)} For any $\epsilon>0$,
\begin{equation}\label{zz_gewichtetes_maximum}
\p\bigg(\frac{1}{n\Delta_n}\max_{k:|k-k^*|\geq a(n)}\bigg|\frac{U_n(k^*)-U_n(k)}{k^*-k}\bigg|>\epsilon\bigg)\rightarrow 0.
\end{equation}
\textit{(ii)} For any $\epsilon>0$,
\begin{equation}\label{zz_gewichtetes_maximum_unkk}
\p\bigg(\frac{1}{n\Delta_n}\max_{1\leq k \leq k^*-a(n)}\bigg|\frac{U_n(k^*,k^*)-U_n(k,k^*)}{k^*-k}-(n-k^*)\Theta_{\Delta_n}\bigg|>\epsilon\bigg)\rightarrow 0,
\end{equation}
and 
\begin{equation*}
\p\bigg(\frac{1}{n\Delta_n}\max_{k^*+a(n)\leq k \leq n}\bigg|\frac{U_n(k^*,k^*)-U_n(k^*,k)}{k-k^*}-k^*\Theta_{\Delta_n}\bigg|>\epsilon\bigg)\rightarrow 0,
\end{equation*}
where $\Theta_{\Delta_n}$ is the same as in (\ref{remark_theta_delta_n}). 

\end{lemma}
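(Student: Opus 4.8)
The plan is to substitute the Hoeffding decompositions (\ref{hoeffding_decomposition_h}) and (\ref{hoeffding_decomposition_hn}) into $U_n(k)$ and $U_n(k,k^*)$, split off the linear (H\'ajek-projection) parts from the degenerate kernels, and treat each by an appropriate maximal inequality. Both statements have the same architecture: a weighted maximum of a linear partial-sum term (handled by a H\'ajek-R\'enyi-type inequality for the $L_1$ NED sequences $(h_1(Y_j))$, resp.\ $(h_{1,n}(Y_j))$ of Section \ref{Properties of the kernel}), a term that is constant in $k$ (handled by the invariance principle of Section \ref{Properties of the kernel}, resp.\ by \thref{lemma_abschaetzung_h}), and a degenerate-kernel term (handled by \thref{hajek_renyi_g}). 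The whole point is that the scale $a(n)=M/\Delta_n^2$ is chosen precisely so that, after dividing by $n\Delta_n$, each bound collapses to something of order $M^{-1/2}$ plus a term that is $o(1)$ as $n\to\infty$.

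For part (i), writing $T_k=\sum_{i=1}^{k}h_1(Y_i)$ and using $h_2=-h_1$, the double sum telescopes to $U_n(k)=nT_k-kT_n+\tilde U_n^{(g)}(k)$ with $\tilde U_n^{(g)}$ as in (\ref{tilde_U_n_g}). Hence, for $k<k^*$,
\[
\frac{1}{n\Delta_n}\,\frac{U_n(k^*)-U_n(k)}{k^*-k}
=\frac{1}{\Delta_n}\,\frac{T_{k^*}-T_k}{k^*-k}-\frac{T_n}{n\Delta_n}+\frac{1}{n\Delta_n}\,\frac{\tilde U_n^{(g)}(k^*)-\tilde U_n^{(g)}(k)}{k^*-k},
\]
and I would bound the three pieces separately (the range $k>k^*$ being symmetric). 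The middle piece does not depend on $k$; by the functional CLT of Section \ref{Properties of the kernel}, $T_n=O_P(n^{1/2})$, so $T_n/(n\Delta_n)=O_P((\sqrt n\Delta_n)^{-1})\to0$ by (\ref{assumption_size_change_infty}). For the degenerate piece I apply (\ref{hajek_renyi_g_1}) of \thref{hajek_renyi_g} at level $\epsilon n\Delta_n$: its right-hand side becomes $\tfrac{C}{\epsilon^2 n^2\Delta_n^2}\big(\tfrac{n^2}{a(n)}+\tfrac1n\big)$, and with $a(n)=M/\Delta_n^2$ the first summand is exactly $C/(\epsilon^2M)$ while the second is $O((n^3\Delta_n^2)^{-1})=o(1)$; the fixed remainder $\tilde U_n^{(g)}(k^*)/(k^*-k)=O_P(n/a(n))$ (using $\E(\tilde U_n^{(g)}(k^*))^2\le Cn^2$ from \thref{Lemma_DFGW}) contributes $O_P(\Delta_n/M)=o_P(1)$ after division by $n\Delta_n$.

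The delicate piece is $\frac{1}{\Delta_n}\max_{|k-k^*|\ge a(n)}\frac{|T_{k^*}-T_k|}{k^*-k}$. Since $(h_1(Y_j))$ is stationary, zero-mean, bounded and $L_1$ NED, the covariance summability underlying \thref{hajek_renyi_inequality} gives $\E(T_{k^*}-T_k)^2\le C(k^*-k)$, so each fixed term is of order $(k^*-k)^{-1/2}\le a(n)^{-1/2}=\Delta_n/\sqrt M$ and, weighted by $1/\Delta_n$, of order $M^{-1/2}$. The obstacle is to upgrade this from a pointwise to a uniform bound: I would apply a H\'ajek-R\'enyi maximal inequality to the reversed partial sums $j\mapsto T_{k^*}-T_{k^*-j}$ with weights $1/j$, and the key is to secure the sharp in-probability rate $a(n)^{-1/2}$ for the weighted maximum --- a convenient route being an $L^2$-maximal bound $\E\max_{j\le N}(T_{k^*}-T_{k^*-j})^2\le CN$ (from the same covariance summability, via dyadic blocking). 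This yields $\p\!\big(\max_{j\ge a(n)}\tfrac{|T_{k^*}-T_{k^*-j}|}{j}>\epsilon\Delta_n\big)\le C/(\epsilon^2\Delta_n^2a(n))=C/(\epsilon^2M)\to0$. Matching this $a(n)^{-1/2}$ maximal rate against the weight $1/\Delta_n$ and the choice $a(n)=M/\Delta_n^2$ is the heart of the argument.

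For part (ii), I expand $U_n(k,k^*)$ by (\ref{hoeffding_decomposition_hn}) as $k(n-k^*)\Theta_{\Delta_n}+(n-k^*)S_k^{(1)}+k(S_n^{(2)}-S_{k^*}^{(2)})+U_n^{(g)}(k,k^*)$, subtract the value at $k=k^*$, and divide by $k^*-k$; the $\Theta_{\Delta_n}$-contributions telescope to exactly the $(n-k^*)\Theta_{\Delta_n}$ that is removed in the statement, leaving $(n-k^*)\frac{S_{k^*}^{(1)}-S_k^{(1)}}{k^*-k}$, the $k$-free term $S_n^{(2)}-S_{k^*}^{(2)}$, and $\frac{U_n^{(g)}(k^*,k^*)-U_n^{(g)}(k,k^*)}{k^*-k}$. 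The $k$-free term is $o_P(n^{1/2})$ by \thref{lemma_abschaetzung_h}, hence $o_P(1)$ after division by $n\Delta_n$; the degenerate term is handled exactly as in (i) using (\ref{hajek_renyi_g_2}) of \thref{hajek_renyi_g}. The linear term is treated by \thref{hajek_renyi_inequality} applied to $(h_{1,n}(Y_j))$, and here the smallness $|h_{1,n}|\le C\Delta_n$ from (\ref{abschaetzung_h1n_kleiner_delta}) gives $\E(S_{k^*}^{(1)}-S_k^{(1)})^2\le C(k^*-k)\Delta_n$; the extra factor $\Delta_n$ reconciles the scaling against the weight $1/\Delta_n$ even without a sharp blocking step, so this second statement is genuinely easier than the first. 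The symmetric bound over $k^*+a(n)\le k\le n$, involving $U_n(k^*,k)$, follows by the identical decomposition.
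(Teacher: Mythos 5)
Your proposal reproduces the architecture of the paper's own proof: the same decompositions $U_n(k)=nT_k-kT_n+\tilde U_n^{(g)}(k)$ and $U_n(k,k^*)=k(n-k^*)\Theta_{\Delta_n}+(n-k^*)S_k^{(1)}+k\big(S_n^{(2)}-S_{k^*}^{(2)}\big)+U_n^{(g)}(k,k^*)$, the same split into a weighted linear term, a $k$-free term and degenerate terms, and the same auxiliary results (\thref{lemma_abschaetzung_h}, \thref{hajek_renyi_g}, \thref{Lemma_DFGW}) applied at the same places; these parts coincide with the paper's bounds for $\rho_k^{(2)},\rho_k^{(3)},\rho_k^{(4)}$ and $\nu_k^{(2)},\nu_k^{(3)},\nu_k^{(4)}$ and are correct. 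Part (ii) is also sound: using $|h_{1,n}|\le C\Delta_n$ from (\ref{abschaetzung_h1n_kleiner_delta}) to get $\E\big(S^{(1)}_{k^*}-S^{(1)}_k\big)^2\le C(k^*-k)\Delta_n$ makes the Kokoszka--Leipus route close that term (the extra $\Delta_n$ also tames the Cauchy--Schwarz cross term), which is in fact tighter reasoning than the paper's own treatment of $\nu_k^{(1)}$, where the smallness of $h_{1,n}$ is not exploited.

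The genuine gap is in part (i), at exactly the step you call the heart of the argument. The claimed bound $\E\max_{j\le N}\big(T_{k^*}-T_{k^*-j}\big)^2\le CN$ does \emph{not} follow from covariance summability via dyadic blocking: under second-moment increment bounds alone, blocking (Rademacher--Menshov/M\'oricz) yields only $CN\log^2N$, and the logarithm cannot be removed at this level of generality (orthogonal increments have trivially summable covariances, yet attain the $\log^2$). The logarithm is fatal here: running your shell argument with it gives a tail of order $\log^2\big(M/\Delta_n^2\big)/(\epsilon^2M)$, which diverges as $n\rightarrow\infty$ for every fixed $M$ because $\Delta_n\rightarrow0$, so the double limit fails. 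The repair needs fourth moments, which are available in the paper's toolbox: $(h_1(Y_j))$ is bounded, stationary and $L_1$ NED with approximation constants $C\sqrt{a_k}$, so the second part of \thref{lemma_2.18} together with (\ref{condition_appr.const_regu.coeff}) gives $\E\big|T_{k^*-i}-T_{k^*-j}\big|^4\le C|i-j|^2$; then \thref{Billingsley} with $\beta=4$, $\alpha=2$ yields $\p\big(\max_{j\le N}|T_{k^*}-T_{k^*-j}|\ge\lambda\big)\le CN^2/\lambda^4$, and your dyadic shells $2^ra(n)\le j<2^{r+1}a(n)$ then give $\p\big(\max_{j\ge a(n)}|T_{k^*}-T_{k^*-j}|/j>\epsilon\Delta_n\big)\le C/(\epsilon^4M^2)\rightarrow0$ uniformly in $n$.

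Two remarks on the comparison with the paper. First, your instinct that something sharper than \thref{hajek_renyi_inequality} is needed for this term is correct, and the point is not cosmetic: the paper handles $\rho_k^{(1)}$ by asserting (\ref{hri}), i.e.\ the rate $O_P(a(n)^{-1/2})$, but the tail bound of \thref{hajek_renyi_inequality} is $C/(\epsilon^2\sqrt m)$ (its cross term contributes $\sum_{k\ge m} k^{-3/2}$), which only yields $O_P(a(n)^{-1/4})$ and, after the $1/\Delta_n$ weighting, the useless bound $C/(\epsilon^2\Delta_n\sqrt M)$; so the fourth-moment/Billingsley argument sketched above is what actually makes this step airtight. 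Second, applying the maximal inequality directly to the reversed sums $j\mapsto T_{k^*}-T_{k^*-j}$, as you do (legitimate, since \thref{theorem_KL} and \thref{Billingsley} hold for arbitrary random variables), is cleaner than the paper's claim that this family is jointly equal in distribution to $\{\tilde S_j\}$, which tacitly presumes time-reversibility of the stationary process.
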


\begin{proof}
Notice that $\{k:|k-k^*|\geq a(n)\}=\{k\leq k^*-a(n)\}\cup\{k\geq k^*+a(n)\}$. We will prove relations (\ref{zz_gewichtetes_maximum}) and (\ref{zz_gewichtetes_maximum_unkk}) for $\max_{1\leq k \leq k^*-a(n)}$. The proof for $\max_{k^*+a(n)\leq k\leq n}$ is similar.\par 
Notice, that $a(n)=\frac{Mn}{\Delta_n^2n}=o(Mn)$ since $n\Delta_n^2\rightarrow\infty$ by assumption (\ref{assumption_size_change_infty}). Therefore, for a fixed $M$, $a(n)=o(k^*)$ and $k^*-a(n)>1$ as $n\rightarrow\infty$. \par 
\textit{(i)} Denote
\begin{equation*}
\tilde{S}_k = \sum_{i=1}^k h_1(Y_i).
\end{equation*}

By Hoeffding's decomposition (\ref{hoeffding_decomposition_h}), for $k\leq k^*$, and using $h_1(x)=-h_2(x)$, it follows
\begin{multline*}
U_n(k) = \sum_{i=1}^{k}\sum_{j=k+1}^{n} \big(h_1(Y_i)+ h_2(Y_j) + g(Y_i,Y_j)\big)\\
 = (n-k)\sum_{i=1}^{k}h_1(Y_i) - k \sum_{j=k+1}^{n}h_1(Y_j) + \sum_{i=1}^{k}\sum_{j=k+1}^{n}g(Y_i,Y_j) = n\tilde{S}_k- k\tilde{S}_n + \tilde{U}_n^{(g)}(k),
\end{multline*}
where $\tilde{U}_n^{(g)}(k)$ is defined in (\ref{tilde_U_n_g}).
Hence,
\begin{equation*}
\Big|U_n(k^*)-U_n(k)\Big| = \Big|n\big(\tilde{S}_{k^*}-\tilde{S}_k\big)-(k^*-k)\tilde{S}_n + \tilde{U}_n^{(g)}(k^*)-\tilde{U}_n^{(g)}(k)\Big|.
\end{equation*}
Therefore, for $1\leq k\leq k^*-a(n)$,
\begin{align*}
\frac{1}{n\Delta_n}\Big|\frac{U_n(k^*)-U_n(k)}{k^*-k}&\Big|\\
&\leq  \frac{1}{\Delta_n}\frac{\big|\tilde{S}_{k^*}-\tilde{S}_k\big|}{k^*-k}
 + \frac{1}{\Delta_n}\frac{\big|\tilde{S}_n\big|}{n} + \frac{\big|\tilde{U}_n^{(g)}(k^*)\big|}{n\Delta_na(n)} + \frac{1}{n\Delta_n}\frac{\big|\tilde{U}_n^{(g)}(k)\big|}{k^*-k}\\
 &=: \rho_k^{(1)}+\rho_k^{(2)}+\rho_k^{(3)}+\rho_k^{(4)}.
\end{align*}

It suffices to show that for any $\epsilon>0$, as $n\rightarrow\infty$, for $l=1,\ldots,4$,
\begin{equation}\label{zz_gewichtetes_maximum_2}
\p \Big( \max_{1\leq k \leq k^*-a(n)} \rho_k^{(l)} >\epsilon \Big) \rightarrow 0, \quad M\rightarrow \infty,
\end{equation}  
which proves (\ref{zz_gewichtetes_maximum}) for $\max_{1\leq k\leq k^*-a(n)}$.\par 
For $l=1$, stationarity of the process $(h_1(Y_j))$ yields
\begin{equation*}
\Big\{ \big|\tilde{S}_{k^*}-\tilde{S}_{k}\big|=\big| \sum_{i=k+1}^{k^*}h_1(Y_i) \big|,\; 1\leq k\leq k^*-a(n) \Big\} \overset{d}{=} \Big\{ \big|\tilde{S}_{k^*-k}\big|,\; 1\leq k\leq k^*-a(n) \Big\}.
\end{equation*}
Therefore,
\begin{equation*}
\max_{1\leq k\leq k^*-a(n)}\rho_k^{(1)} \overset{d}{=} \frac{1}{\Delta_n}\max_{k\geq 1:\; k^*-k\geq a(n)}\frac{\big|\tilde{S}_{k^*-k}\big|}{k^*-k} \overset{d}{=} \frac{1}{\Delta_n}\max_{a(n)\leq j\leq n}\frac{\big|\tilde{S}_{j}\big|}{j}.
\end{equation*}
Since $(h_1(Y_j))$ is $L_1$ NED on an absolutely regular process $(Z_j)$, $\E h_1(Y_1)=0$ and $|h_1(x)|\leq 1/2$, then by \thref{hajek_renyi_inequality},
\begin{equation}\label{hri}
\max_{a(n)\leq k \leq n}\frac{1}{k}\big|\tilde{S}_{k}\big| = O_P\bigg(\frac{1}{\sqrt{a(n)}}\bigg).
\end{equation}
Thus,
\[
\max_{1\leq k\leq k^*-a(n)}\rho_k^{(1)} = O_P\bigg(\frac{1}{\Delta_n\sqrt{a(n)}}\bigg) = O_P\bigg(\frac{1}{\sqrt{M}}\bigg) = o_P(1), \quad \text{as} \; M\rightarrow\infty,
\]
which proves (\ref{zz_gewichtetes_maximum_2}) for $l=1$.\par 
For $l=2$, by (\ref{hri}), $|\tilde{S}_n|/n = O_P(n^{-1/2})$. Thus, 
\[
\rho_k^{(2)} = \frac{1}{\Delta_n}\frac{\tilde{S}_n}{n} = O_P\bigg(\frac{1}{\Delta_n\sqrt{n}}\bigg) = o_P(1),
\]
since $\Delta_n\sqrt{n}\rightarrow\infty$ by (\ref{assumption_size_change_infty}), which proves (\ref{zz_gewichtetes_maximum_2}) for $l=2$.\par
To show (\ref{zz_gewichtetes_maximum_2}) for $l=3$, recall that $g(x,y)\leq 3/2$ is $1$-continuous, see Subsection \ref{Properties of the kernel}. Therefore, by \thref{Lemma_DFGW}, 
\[
\E\bigg( \frac{\tilde{U}_n^{(g)}(k^*)}{\sqrt{k^*(n-k^*)}} \bigg)^2 \leq C,
\]
which implies that
\[
\frac{\tilde{U}_n^{(g)}(k^*)}{\sqrt{k^*(n-k^*)}} = O_P(1).
\]
Thus,
\begin{multline*}
\rho_k^{(3)} = \frac{\big|\tilde{U}_n^{(g)}(k^*) \big|}{n\Delta_na(n)} = O_P\bigg( \frac{\sqrt{k^*(n-k^*)}}{n\Delta_na(n)} \bigg)\\ = O_P\bigg( \frac{1}{\Delta_na(n)} \bigg) = O_P\bigg( \frac{\Delta_n}{M} \bigg) = O_P\bigg( \frac{1}{M} \bigg) = o_P(1),\quad \text{as} \; M\rightarrow\infty,
\end{multline*}
which proves (\ref{zz_gewichtetes_maximum_2}) for $l=3$.\par 
Finally, for $l=4$, by \thref{hajek_renyi_g}, 
\begin{align*}
\p\Big( \max_{1\leq k \leq k^*-a(n)} \rho_k^{(4)} > \epsilon\Big) &= \p\Big( \max_{1\leq k \leq k^*-a(n)} \frac{\tilde{U}_n^{(g)}(k)}{k^*-k} > \epsilon n\Delta_n\Big)\\
&\leq \frac{C}{(\epsilon n\Delta_n)^2}\Big(\frac{n^2}{a(n)}+\frac{1}{n}\Big) = \frac{C}{\epsilon^2}\Big(\frac{1}{M}+\frac{1}{n^3\Delta_n^2}\Big) \rightarrow 0,\quad \text{as} \; M\rightarrow\infty, 
\end{align*}
which proves (\ref{zz_gewichtetes_maximum_2}) for $l=4$ and completes the proof of \textit{(i)}.\par \bigskip
\textit{(ii)} Let $S_k^{(1)}$, $S_k^{(2)}$ and $U_n^{(g)}(k,k^*)$ be defined as in (\ref{s_k}) and (\ref{U_n_g}). By Hoeffding's decomposition (\ref{hoeffding_decomposition_hn}), for $k\leq k^*$,
\begin{multline*}
U_n(k,k^*)-k(n-k^*)\Theta_{\Delta_n} = \sum_{i=1}^{k}\sum_{j=k^*+1}^{n} \big(h_{1,n}(Y_i)+ h_{2,n}(Y_j) + g_n(Y_i,Y_j)\big)\\
= (n-k^*)\sum_{i=1}^{k}h_{1,n}(Y_i) + k \sum_{j=k^*+1}^{n}h_{2,n}(Y_j) + \sum_{i=1}^{k}\sum_{j=k^*+1}^{n}g_n(Y_i,Y_j)\\
= (n-k^*)S_k^{(1)}+k(S_n^{(2)}-S_{k^*}^{(2)}) + U_n^{(g)}(k,k^*).
\end{multline*}
Hence, 
\begin{multline*}
\big|U_n(k^*,k^*)-U_n(k,k^*)-(k^*-k)(n-k^*)\Theta_{\Delta_n}\big|\\
=\big|(n-k^*)(S_{k^*}^{(1)}-S_k^{(1)})+(k^*-k)(S_n^{(2)}-S_{k^*}^{(2)}) + U_n^{(g)}(k^*,k^*)-U_n^{(g)}(k,k^*)\big|.
\end{multline*}
Therefore, for $1\leq k\leq k^*-a(n)$,
\begin{align*}
\frac{1}{n\Delta_n}&\Big|\frac{U_n(k^*,k^*)-U_n(k,k^*)}{k^*-k}-(n-k^*)\Theta_{\Delta_n}\Big|\\
&\leq  \frac{1}{\Delta_n}\frac{\big|S_{k^*}^{(1)}-S_k^{(1)}\big|}{k^*-k}
 + \frac{1}{\Delta_n}\frac{\big|S_n^{(2)}-S_{k^*}^{(2)}\big|}{n} + \frac{\big|U_n^{(g)}(k^*,k^*)\big|}{n\Delta_na(n)} + \frac{1}{n\Delta_n}\frac{\big|U_n^{(g)}(k,k^*)\big|}{k^*-k}\\
 &=: \nu_k^{(1)}+\nu_k^{(2)}+\nu_k^{(3)}+\nu_k^{(4)}.
\end{align*}
It suffices to show that for any $\epsilon>0$, as $n\rightarrow\infty$, for $l=1,\ldots,4$,
\begin{equation}\label{zz_gewichtetes_maximum_3}
\p \Big( \max_{1\leq k \leq k^*-a(n)} \nu_k^{(l)} >\epsilon \Big) \rightarrow 0, \quad M\rightarrow \infty,
\end{equation} 
which proves (\ref{zz_gewichtetes_maximum_unkk}) for $\max_{1\leq k \leq k^*-a(n)}$. The process $(h_{1,n}(Y_j))$ is stationary and $L_1$ NED on an absolutely regular process, see Section \ref{Properties of the kernel}. Furthermore, it has zero mean and $|h_{1,n}|\leq C\Delta_n$ by (\ref{abschaetzung_h1n_kleiner_delta}). Hence, by the same argument as for $\rho_k^{(1)}$, using \thref{hajek_renyi_inequality}, it follows
\begin{equation*}
\max_{1\leq k \leq k^*-a(n)}\nu_k^{(1)} \overset{d}{=} \frac{1}{\Delta_n}\max_{a(n)\leq j\leq n}\frac{|S_j^{(1)}|}{j}=O_P\Big(\frac{1}{\Delta_n\sqrt{a(n)}}\Big)=O_P\Big(\frac{1}{\sqrt{M}}\Big)=o_P(1),
\end{equation*}
as $M\rightarrow\infty$.\par 
\thref{lemma_abschaetzung_h} yields $\max_{1\leq k\leq n}n^{-1/2}|S_k^{(2)}|=o_P(1)$. Therefore,
\begin{equation*}
\max_{1\leq k \leq k^*-a(n)}\nu_k^{(2)} \leq \frac{1}{\sqrt{n}\Delta_n}\big(n^{-1/2}|S_n^{(2)}|+n^{-1/2}|S_{k^*}^{(2)}|\big)= o_P(1),
\end{equation*}
since $\sqrt{n}\Delta_n\rightarrow\infty$.\par 
We showed in Subsections \ref{Sub_U-statistics and Hoeffding decomposition} and \ref{Sub_1-continuity_h_hn} that the function $g_n(x,y)$ is bounded and $1$-continuous. Hence, by \thref{Lemma_DFGW},
\[
\E\bigg( \frac{{U}_n^{(g)}(k^*,k^*)}{\sqrt{k^*(n-k^*)}} \bigg)^2 \leq C.
\]
Therefore, the claim $\max_{1\leq k \leq k^*-a(n)}\nu_k^{(3)}=o_P(1)$ follows using the same argument as in the proof of (\ref{zz_gewichtetes_maximum_2}) for $l=3$.\par 
By \thref{hajek_renyi_g},
\begin{multline*}
\p\Big( \max_{1\leq k \leq k^*-a(n)} \nu_k^{(4)} > \epsilon\Big) = \p\Big( \max_{1\leq k \leq k^*-a(n)} \frac{|{U}_n^{(g)}(k,k^*)|}{k^*-k} > \epsilon n\Delta_n\Big)\\
\leq \frac{C}{(\epsilon n\Delta_n)^2}\Big(\frac{n^2}{a(n)}+\frac{1}{n}\Big) = \frac{C}{\epsilon^2}\Big(\frac{1}{M}+\frac{1}{n^3\Delta_n^2}\Big) \rightarrow 0,\quad \text{as} \; M\rightarrow\infty, 
\end{multline*}
which proves (\ref{zz_gewichtetes_maximum_3}) for $l=4$. This completes the proof of (\ref{zz_gewichtetes_maximum_unkk}) and the lemma.\par 
\end{proof}

\section{Auxiliary results from the literature} \label{result_literature}

This section contains results from the literature used in the proofs of this paper.\par 
\thref{lemma_2.18} states a correlation and a moment inequality for $L_1$ NED random variables, established by \citet{borovkova.2001}.

\begin{lemma}\thlabel{lemma_2.18}(Lemma 2.18 and 2.24, \citet{borovkova.2001}) 
Let $(Y_j)$ be $L_1$ near epoch dependent on an absolutely regular, stationary process with mixing coefficients $\beta_k$ and approximation constants $a_k$, and such that $|Y_0|\leq K\leq \infty$ a.s. Then, for all $i,k\geq 0$,
\[
\left| \Cov(Y_i,Y_{i+k})\right| \leq 4 Ka_{\lfloor \frac{k}{3} \rfloor} + 2 K^2 \beta_{\lfloor \frac{k}{3} \rfloor}.
\]
In addition, if $\sum_{k=0}^{\infty}k^2 (a_k+\beta_k)<\infty$, then there exists $C>0$ such that for all $n\geq 1$
\begin{equation}\label{lemma_2.24_result}
\E\bigg(\sum_{i=1}^n \big(Y_i-\E Y_i\big)\bigg)^4 \leq Cn^2.
\end{equation}
\end{lemma}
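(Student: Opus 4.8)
The plan is to establish the two estimates in turn, since the second-moment consequence of the covariance bound feeds directly into the fourth-moment bound. For the \emph{covariance inequality} the idea is to replace each variable by a conditional expectation living on a finite $Z$-block, chosen so that the two blocks are separated. Put $m=\lfloor k/3\rfloor$ and set $\hat Y_i=\E(Y_i\mid\mathcal{G}_{i-m}^{i+m})$ and $\hat Y_{i+k}=\E(Y_{i+k}\mid\mathcal{G}_{i+k-m}^{i+k+m})$; by $L_1$ NED these satisfy $\E|Y_i-\hat Y_i|\le a_m$ and $\E|Y_{i+k}-\hat Y_{i+k}|\le a_m$, while $|\hat Y_i|,|\hat Y_{i+k}|\le K$ since $|Y_0|\le K$. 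Writing
\[
\Cov(Y_i,Y_{i+k})=\Cov(Y_i-\hat Y_i,Y_{i+k})+\Cov(\hat Y_i,Y_{i+k}-\hat Y_{i+k})+\Cov(\hat Y_i,\hat Y_{i+k}),
\]
I would bound the first two terms by the elementary estimate $|\Cov(U,V)|\le 2\|V\|_\infty\,\E|U|$, valid for bounded $V$, giving at most $2Ka_m$ each. The third term is a covariance of two bounded functions measurable with respect to $Z$-blocks separated by $k-2m\ge m$ indices (as $3\lfloor k/3\rfloor\le k$); the covariance inequality for absolutely regular sequences, which follows from the coupling characterisation of $\beta$-mixing, then yields $|\Cov(\hat Y_i,\hat Y_{i+k})|\le 2K^2\beta_m$. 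Summing the three contributions gives exactly $4Ka_{\lfloor k/3\rfloor}+2K^2\beta_{\lfloor k/3\rfloor}$.

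For the \emph{fourth-moment inequality}, assuming without loss of generality that $\E Y_i=0$, I would expand
\[
\E\Big(\sum_{i=1}^n Y_i\Big)^4=\sum_{i_1,i_2,i_3,i_4=1}^n\E(Y_{i_1}Y_{i_2}Y_{i_3}Y_{i_4})
\]
and, by symmetry, reduce to ordered quadruples $i_1\le i_2\le i_3\le i_4$ at the cost of a fixed combinatorial factor. Let $d_1=i_2-i_1$, $d_2=i_3-i_2$, $d_3=i_4-i_3$ and $g=\max(d_1,d_2,d_3)$. Splitting the four indices into a left and a right group across the largest gap and approximating each group by conditional expectations on its own $Z$-block, as above, the joint moment factorises across the gap up to an error controlled by $a_{\lfloor g/3\rfloor}$ and $\beta_{\lfloor g/3\rfloor}$. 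Because the $Y_i$ are centred, any factored term with a group of odd size vanishes, so the surviving factored contribution consists of products of two covariances; summed over all indices this is at most $\big(\sum_{i,j=1}^n|\Cov(Y_i,Y_j)|\big)^2\le(Cn)^2$, where the inner bound $\sum_{i,j}|\Cov(Y_i,Y_j)|\le Cn$ follows from the covariance inequality together with (\ref{condition_appr.const_regu.coeff}). This is the $O(n^2)$ main term.

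It remains to bound the genuinely fourth-order remainder, which decays in the largest gap $g$. Fixing the smallest index $i_1$ costs a factor $n$, and the number of gap triples $(d_1,d_2,d_3)$ with $\max(d_1,d_2,d_3)=g$ is $O(g^2)$, so this remainder is at most $Cn\sum_{g\ge 0}g^2\big(a_{\lfloor g/3\rfloor}+\beta_{\lfloor g/3\rfloor}\big)$, which is finite precisely because $\sum_k k^2(a_k+\beta_k)<\infty$; hence the remainder is $O(n)\le Cn^2$, and adding the two contributions proves (\ref{lemma_2.24_result}). The main obstacle is this last step: one must carry out the NED truncation at all four indices simultaneously, keep track of the several error terms it generates, and verify that the $k^2$ weight in the summability hypothesis is exactly what absorbs the $O(g^2)$ count of gap configurations, so that the four-way interaction contributes only $O(n)$ rather than a larger power of $n$.
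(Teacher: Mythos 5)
This statement is quoted from the literature (Lemma 2.18 and Lemma 2.24 of \citet{borovkova.2001}); the paper itself contains no proof of it, so there is no in-paper argument to compare against. Your reconstruction is correct and is essentially the argument of the cited source: block conditional expectations with the three-term covariance telescoping and the coupling characterisation of absolute regularity for the covariance bound (the gap $k-2\lfloor k/3\rfloor\geq\lfloor k/3\rfloor$ makes the constants come out exactly as $4Ka_{\lfloor k/3\rfloor}+2K^{2}\beta_{\lfloor k/3\rfloor}$), and the largest-gap decomposition for the fourth moment, where centring kills the odd-sized factorisations, the $(2,2)$ factorisation yields the $\big(\sum_{i,j}|\Cov(Y_i,Y_j)|\big)^{2}=O(n^{2})$ main term, and the $k^{2}$-weighted summability exactly absorbs the $O(g^{2})$ count of gap configurations in the $O(n)$ remainder. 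One cosmetic point: the summability you should invoke is the lemma's own hypothesis $\sum_{k}k^{2}(a_k+\beta_k)<\infty$ rather than the paper's condition (\ref{condition_appr.const_regu.coeff}), although the latter implies the former since $a_k\leq\sqrt{a_k}$ once $a_k\leq 1$.
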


The proof of Lemma 2.24 in \citet{borovkova.2001} shows that (\ref{lemma_2.24_result}) holds with $C=C_0K^2$, where $C_0>0$ does not depend on $K$ and $n$.\par 
\bigskip

In Theorem 3 of \citet{Dehling.2015} the asymptotic distribution of the Wilcoxon test statistic for $L_1$ NED random process is  obtained. We use this result to show the consistency of the Wilcoxon-type estimator $\hat{k}$. 

\begin{theorem}\thlabel{Theorem_DFGW}(Theorem 3, \citet{Dehling.2015}) 
Assume that $\left(Y_j\right)$ is stationary and $L_1$ near epoch dependent process on some absolutely regular process $\left(Z_j\right)$ and (\ref{condition_appr.const_regu.coeff}) holds. Then, 
\begin{equation*}
\frac{1}{n^{3/2}}\max_{1\leq k < n}\bigg| \sum_{i=1}^k\sum_{j=k+1}^n \left(1_{\left\{ Y_i \leq Y_j \right\}}-1/2\right) \bigg| \overset{d}{\rightarrow} \sigma \sup_{0\leq \tau \leq 1}\left| B\left(\tau\right) \right|,
\end{equation*}
where $\left(B\left(\tau\right)\right)_{0\leq \tau \leq 1}$ is the standard Brownian bridge process,
\begin{equation*}
\sigma^2 = \sum_{k=-\infty}^\infty \Cov\left(F\left(Y_k\right),F\left(Y_0\right)\right),
\end{equation*}
and $F$ denotes the distribution function of $Y_j$.
\end{theorem}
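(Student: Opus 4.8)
The plan is to reduce the double-indexed Wilcoxon sum to a one-dimensional partial-sum process via Hoeffding's decomposition, to identify the leading term as a tied-down (Brownian-bridge) process, and to show that the degenerate $U$-statistic remainder is uniformly negligible. Writing $h(x,y)=1_{\{x\le y\}}$ and $\Theta=1/2$, I would first record the algebraic identity that follows from (\ref{hoeffding_decomposition_h}) using $h_2=-h_1$ with $h_1(x)=1/2-\F(x)$:
\begin{equation*}
U_n(k)=\sum_{i=1}^{k}\sum_{j=k+1}^{n}\big(h(Y_i,Y_j)-\Theta\big)=n\tilde{S}_k-k\tilde{S}_n+\tilde{U}_n^{(g)}(k),
\end{equation*}
where $\tilde{S}_k=\sum_{i=1}^{k}h_1(Y_i)$ and $\tilde{U}_n^{(g)}(k)$ is the degenerate part defined in (\ref{tilde_U_n_g}). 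Dividing by $n^{3/2}$ exhibits the linear term $n^{-1/2}\big(\tilde{S}_k-\tfrac{k}{n}\tilde{S}_n\big)$ as the leading contribution and $n^{-3/2}\tilde{U}_n^{(g)}(k)$ as a remainder.

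For the leading term I would invoke the invariance principle for the partial sums of $(h_1(Y_j))$ derived in Section~\ref{Properties of the kernel}: since $(h_1(Y_j))$ is bounded and, as a $1$-continuous functional of the $L_1$ NED process $(Y_j)$, is itself $L_2$ NED on the absolutely regular $(Z_j)$, Corollary~3.2 of \citet{wooldridge.1988} gives $\big(n^{-1/2}\sum_{i=1}^{[nt]}h_1(Y_i)\big)_{0\le t\le1}\xrightarrow{d}(\sigma W(t))_{0\le t\le1}$. Because $h_1=1/2-\F$, the limiting variance $\sigma^2=\sum_{k}\Cov(h_1(Y_0),h_1(Y_k))$ coincides with $\sum_{k}\Cov(\F(Y_0),\F(Y_k))$, matching the statement. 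Applying the continuous mapping theorem, first to the map $x\mapsto(x(t)-t\,x(1))_{0\le t\le1}$ and then to the supremum functional on $D[0,1]$, yields $\max_{1\le k<n}n^{-1/2}\big|\tilde{S}_k-\tfrac{k}{n}\tilde{S}_n\big|\xrightarrow{d}\sigma\sup_{0\le\tau\le1}|B(\tau)|$, where $B(\tau)=W(\tau)-\tau W(1)$ is the Brownian bridge.

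It then remains to prove $n^{-3/2}\max_{1\le k<n}\big|\tilde{U}_n^{(g)}(k)\big|=o_P(1)$, which I expect to be the main obstacle. A pointwise second-moment estimate is comparatively easy: $g$ is bounded and, as shown in Subsection~\ref{Sub_1-continuity_h_hn}, $1$-continuous and degenerate, so together with the NED covariance estimates of \thref{lemma_2.18} one obtains $\E\big[(\tilde{U}_n^{(g)}(k))^2\big]\le Ck(n-k)$, whence $n^{-3}\E\big[(\tilde{U}_n^{(g)}(k))^2\big]\le C/n$ for each fixed $k$. The genuine difficulty is passing from this pointwise control to the uniform bound over all $k$: the partial-sum $U$-process is not a martingale, so I would establish a maximal inequality of H\'ajek--R\'enyi type for the increments $R_k=\tilde{U}_n^{(g)}(k)-\tilde{U}_n^{(g)}(k-1)$, exactly in the spirit of \thref{hajek_renyi_g}, combining the variance bound with a cross-moment estimate to control the running maximum and deduce the required $o_P(1)$.

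Finally I would combine the two pieces by Slutsky's theorem. The reverse triangle inequality
\[
\Big|\max_{1\le k<n}n^{-3/2}|U_n(k)|-\max_{1\le k<n}n^{-1/2}\big|\tilde{S}_k-\tfrac{k}{n}\tilde{S}_n\big|\Big|\le n^{-3/2}\max_{1\le k<n}\big|\tilde{U}_n^{(g)}(k)\big|
\]
shows that the two maxima differ by an $o_P(1)$ term, so the weak limit of the right-hand side of the theorem is inherited from the leading term, giving $n^{-3/2}\max_{1\le k<n}|U_n(k)|\xrightarrow{d}\sigma\sup_{0\le\tau\le1}|B(\tau)|$ and completing the proof.
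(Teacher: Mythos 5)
You should first be aware that the paper contains no proof of this statement: it is imported verbatim as Theorem 3 of \citet{Dehling.2015}, so the ``paper's proof'' is a citation, and your blind sketch is in substance a correct reconstruction of the cited argument. Every ingredient you use is in fact available inside the paper: the identity $U_n(k)=n\tilde S_k-k\tilde S_n+\tilde U_n^{(g)}(k)$ is derived in the proof of \thref{Lemma_abschaetzung_gewichtetes_maximum}; the upgrade of $(h_1(Y_j))$ from $L_1$ to $L_2$ NED (via boundedness and truncation --- note that $1$-continuity plus Proposition 2.11 of \citet{borovkova.2001} gives only $L_1$ NED, as you implicitly acknowledge) and the Wooldridge--White FCLT with $\sigma^2=\sum_{k}\Cov(h_1(Y_0),h_1(Y_k))=\sum_k \Cov(F(Y_0),F(Y_k))$ are exactly Subsection \ref{Properties of the kernel}; and the uniform negligibility of the degenerate part, which you correctly single out as the main obstacle, is precisely \thref{Proposition_DFGW} (Proposition 1 of \citet{Dehling.2015}), which you could simply have invoked. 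Where you genuinely depart from the cited source is in proposing a H\'ajek--R\'enyi argument for that last step, and it is worth recording that this route does close using only the paper's own tools: applying \thref{theorem_KL} with constant weights $c_k=n^{-3/2}$ to the increments $R_k=\tilde U_n^{(g)}(k)-\tilde U_n^{(g)}(k-1)$, the bounds $\E\big(\tilde U_n^{(g)}(k)\big)^2\le Ck(n-k)$ and $\E R_{k+1}^2\le Cn$ from \thref{Lemma_DFGW} together with Cauchy--Schwarz yield
\[
\p\Big(\max_{1\le k\le n}n^{-3/2}\big|\tilde U_n^{(g)}(k)\big|>\epsilon\Big)\le \frac{C}{\epsilon^{2}}\,n^{-1/2}\rightarrow 0,
\]
since the cross term contributes $n^{-3}\sum_{k}(\E R_{k+1}^2)^{1/2}\big(\E(\tilde U_n^{(g)}(k))^2\big)^{1/2}\le Cn^{-1/2}$ and the remaining terms are $O(n^{-1})$; this is arguably more elementary than the argument behind Proposition 1 of \citet{Dehling.2015}, and it explains why a plain second-moment application of \thref{Billingsley} would not suffice (the moment bound is linear in $j-i$, i.e.\ $\alpha=1$). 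The only soft spots in your write-up are that this maximal-inequality step is described rather than executed, and that the continuous-mapping step tacitly uses continuity of $x\mapsto\sup_{0\le t\le1}|x(t)-t\,x(1)|$ at continuous paths together with absolute summability of the covariances (guaranteed by \thref{lemma_2.18} and (\ref{condition_appr.const_regu.coeff})) to identify $\sigma^2$; neither is a genuine gap.
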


We use the following results from \citet{Dehling.2015} to handle the degenerate part $g(x,y)$ of the Hoeffding decomposition (\ref{hoeffding_decomposition_h}).

\begin{prop}\thlabel{Proposition_DFGW}(Proposition 1, \citet{Dehling.2015}) 
Let $(Y_j)$ be stationary and $L_1$ near epoch dependent on an absolutely regular process with mixing coefficients $\beta_k$ and approximation constants $a_k$ satisfying
\begin{equation*}
\sum_{k=1}^{\infty}k\left(\beta_k+\sqrt{a_k}+\phi(a_k)\right) < \infty,
\end{equation*}
with $\phi(\epsilon)$ as in \thref{1-continuous}.
If $g(x,y)$ is a 1-continuous bounded degenerate kernel, then, as $n\rightarrow\infty$,
\[
\frac{1}{n^{3/2}}\max_{1\leq k\leq n}\bigg| \sum_{i=1}^{k}\sum_{j=k+1}^{n}g\left(Y_i,Y_j\right)\bigg|\rightarrow_p 0.
\]
\end{prop}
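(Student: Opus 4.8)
The plan is to read the statement as a maximal inequality for the degenerate two-parameter $U$-process $\tilde U_n^{(g)}(k) = \sum_{i=1}^k \sum_{j=k+1}^n g(Y_i, Y_j)$ defined in (\ref{tilde_U_n_g}), exploiting that the degeneracy of $g$ forces this object to be of much smaller order ($O_P(n)$ for a fixed split) than the $n^{3/2}$ scaling. For a single fixed $k$ this is immediate: the second-moment control for bounded $1$-continuous degenerate kernels (\thref{Lemma_DFGW}) gives $\E[\tilde U_n^{(g)}(k)]^2 \le Ck(n-k) = O(n^2)$, so $n^{-3/2}\tilde U_n^{(g)}(k) = O_P(n^{-1/2}) \to 0$. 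The whole difficulty is to upgrade this pointwise statement to a bound uniform in $k$.

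To control the maximum I would invoke the M\'oricz/Billingsley-type maximal inequality \thref{Billingsley}, exactly as in the proof of \thref{lemma_abschaetzung_h}, with exponents $\beta = 4$, $\alpha = 2$. This requires a fourth-moment bound on the increments of $k \mapsto \tilde U_n^{(g)}(k)$. The key algebraic observation is that for $0 \le a < b \le n$ the increment splits into two block $U$-statistics over disjoint index rectangles,
\begin{equation*}
\tilde U_n^{(g)}(b) - \tilde U_n^{(g)}(a) = \sum_{i=a+1}^{b}\sum_{j=b+1}^{n} g(Y_i, Y_j) \; - \; \sum_{i=1}^{a}\sum_{j=a+1}^{b} g(Y_i, Y_j).
\end{equation*}
Each summand is a degenerate $U$-statistic of the same type, over rectangles of sizes $(b-a)\times(n-b)$ and $a\times(b-a)$, so a fourth-moment bound $\E\big[\sum_{i\in I}\sum_{j\in J} g(Y_i,Y_j)\big]^4 \le C(|I|\,|J|)^2$ for disjoint blocks $I, J$ yields $\E[\tilde U_n^{(g)}(b) - \tilde U_n^{(g)}(a)]^4 \le C(b-a)^2 n^2 = \big(\sum_{i=a+1}^b u_{n,i}\big)^2$ with $u_{n,i} = C^{1/2} n$. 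Feeding this into \thref{Billingsley} gives $\p\big(\max_{1\le k\le n}|\tilde U_n^{(g)}(k)| \ge \epsilon\big) \le K\epsilon^{-4}\big(\sum_{i=1}^n u_{n,i}\big)^2 = K'\epsilon^{-4} n^4$, and taking $\epsilon = \delta n^{3/2}$ makes the right-hand side $O(n^{-2}) \to 0$, which is exactly the claimed $n^{-3/2}\max_k|\tilde U_n^{(g)}(k)| = o_P(1)$.

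The main obstacle is the fourth-moment bound for the block degenerate $U$-statistic, which is the genuine analytic heart and the fourth-order analogue of \thref{Lemma_DFGW}. Expanding the fourth power produces many mixed expectations, and the point is that the degeneracy $\E g(x, Y_1) = \E g(Y_1, y) = 0$ forces all configurations in which some index is ``isolated'' to vanish in the independent case; only the configurations in which the four row-indices and the four column-indices each pair up survive, and these number $O((|I|\,|J|)^2)$. Under dependence one cannot integrate out an index exactly, so I would reinstate this cancellation approximately: use the $L_1$ NED property together with the $1$-continuity of $g$ (established in Subsections \ref{Sub_U-statistics and Hoeffding decomposition} and \ref{Sub_1-continuity_h_hn}) to replace the $Y$'s by truncated $\sigma(Z_\cdot)$-measurable versions, with errors controlled by $a_k$ and $\phi(a_k)$, and then use the $\beta$-mixing coefficients to decouple index blocks that are far apart, applying degeneracy on the decoupled pieces. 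The summability assumption $\sum_k k(\beta_k + \sqrt{a_k} + \phi(a_k)) < \infty$ is precisely what guarantees that the total contribution of the non-diagonal configurations stays of order $(|I|\,|J|)^2$ rather than inflating the bound; this covariance bookkeeping (for which \thref{lemma_2.18} supplies the basic correlation estimates) is the delicate part. As a fallback needing only the second-moment bound \thref{Lemma_DFGW} already used above, one may instead combine the same increment decomposition with a dyadic chaining (Menshov--Rademacher) argument, which yields the weaker but still sufficient estimate $\max_k|\tilde U_n^{(g)}(k)| = O_P(n\log n) = o_P(n^{3/2})$.
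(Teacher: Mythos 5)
First, a point of comparison: the paper itself contains no proof of this statement. It is imported verbatim as Proposition 1 of \citet{Dehling.2015} and used as a black box, so your proposal can only be judged on its own merits and against the original source, whose argument is second-moment based.

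Your primary route has a genuine gap at precisely the step you yourself call the analytic heart. The fourth-moment bound $\E\big(\sum_{i\in I}\sum_{j\in J}g(Y_i,Y_j)\big)^4\leq C\left(|I|\,|J|\right)^2$ for degenerate kernels of $L_1$ NED sequences is asserted and sketched, not proved, and it is not a routine extension of anything available here: the only fourth-moment result in the paper, \thref{lemma_2.18} (Lemma 2.24 of \citet{borovkova.2001}), concerns plain partial sums of $(Y_i)$ and already requires the summability $\sum_k k^2(a_k+\beta_k)<\infty$, i.e.\ weight $k^2$, whereas the proposition you are proving assumes only weight $k$. For a two-index degenerate statistic the decoupling bookkeeping (comparing each expectation of a product of four kernel values with its decoupled version, over all partitions of the eight indices) is strictly worse, so there is no reason to believe the hypothesis $\sum_k k\left(\beta_k+\sqrt{a_k}+\phi(a_k)\right)<\infty$ supports your claimed bound; at minimum it would need a full proof with its own, probably stronger, summability requirements. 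Conditional on that bound, your application of \thref{Billingsley} with $\beta=4$, $\alpha=2$, $u_{n,i}=C^{1/2}n$ and $\epsilon=\delta n^{3/2}$ is arithmetically correct, but as written the central estimate is missing. Note also that second moments alone cannot be fed into \thref{Billingsley}, since they only give $\alpha=1$ and the theorem requires $\alpha>1$; this is exactly why you were pushed to fourth moments.

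Your fallback, by contrast, is sound and should have been the main proof. The increment identity $\tilde U_n^{(g)}(b)-\tilde U_n^{(g)}(a)=\sum_{i=a+1}^{b}\sum_{j=b+1}^{n}g(Y_i,Y_j)-\sum_{i=1}^{a}\sum_{j=a+1}^{b}g(Y_i,Y_j)$ is correct, the second display of \thref{Lemma_DFGW} gives $\E\big(\tilde U_n^{(g)}(k)-\tilde U_n^{(g)}(m)\big)^2\leq Cn(k-m)$, and a Menshov--Rademacher/dyadic chaining over the $\log_2 n$ scales yields $\E\max_{1\leq k\leq n}\big(\tilde U_n^{(g)}(k)\big)^2\leq Cn^2(\log n)^2$, hence $\max_k\big|\tilde U_n^{(g)}(k)\big|=O_P(n\log n)=o_P\left(n^{3/2}\right)$. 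This uses only tools actually available under the stated weight-$k$ condition and is in the spirit of the original proof. An even more elementary variant avoids chaining altogether: take a grid of $N=n^{3/4}$ points, control the grid maximum by Chebyshev and a union bound via the first display of \thref{Lemma_DFGW} (contribution $O(N/n)\to 0$ after scaling by $n^{3/2}$), and between grid points use the deterministic bound $\big|\tilde U_n^{(g)}(k)-\tilde U_n^{(g)}(k_l)\big|\leq n\|g\|_{\infty}|k-k_l|\leq \|g\|_{\infty}n^{5/4}=o\left(n^{3/2}\right)$, which follows from boundedness of $g$ alone.
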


\begin{lemma}\thlabel{Lemma_DFGW}(Lemma 1 and 2, \citet{Dehling.2015}) 
Under assumptions of \thref{Proposition_DFGW} there exists $C>0$ such that for all $1\leq m \leq k \leq n$, $n\geq 2$,
\begin{equation*}
\E \bigg(\sum_{i=1}^{k}\sum_{j=k+1}^{n}g(Y_i,Y_j) \bigg)^2 \leq Ck(n-k),
\end{equation*}
\[
\E\bigg(n^{-3}\bigg|\sum_{i=1}^{k}\sum_{j=k+1}^{n}g(Y_i,Y_j)-\sum_{i=1}^{m}\sum_{j=m+1}^{n}g(Y_i,Y_j)\bigg|^2\bigg) \leq C\frac{k-m}{n^2}.
\]
\end{lemma}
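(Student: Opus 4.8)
The plan is to expand the second moment of the double sum, identify the diagonal part as the source of the leading order $Ck(n-k)$, and control the remaining terms through a decoupling covariance estimate assembled from the three ingredients already developed in the paper — the NED approximation (Proposition 2.11 of \citet{borovkova.2001}), the $1$-continuity of $g$ (Subsections \ref{Sub_U-statistics and Hoeffding decomposition} and \ref{Sub_1-continuity_h_hn}), and $\beta$-mixing — with the degeneracy $\E g(x,Y_1)=\E g(Y_1,y)=0$ providing the crucial cancellation. Writing $\tilde U_n^{(g)}(k)$ as in (\ref{tilde_U_n_g}),
\[
\E\big(\tilde U_n^{(g)}(k)\big)^2=\sum_{i_1,i_2=1}^{k}\ \sum_{j_1,j_2=k+1}^{n}\E\big[g(Y_{i_1},Y_{j_1})\,g(Y_{i_2},Y_{j_2})\big].
\]
The full diagonal $i_1=i_2$, $j_1=j_2$ contributes exactly $k(n-k)$ terms, each bounded by $\|g\|_\infty^2$, which already gives the target order; it remains to show that the partial-diagonal terms (one coincidence) and the fully off-diagonal terms (four distinct indices) contribute no more.

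The heart of the argument is a bound on a single fourth-order expectation. For each time point $t$ I would substitute the block approximation $Y_t^{(r)}=\E(Y_t\mid\mathcal G_{t-r}^{t+r})$: the NED property gives $\E|Y_t-Y_t^{(r)}|\le a_r$, and $1$-continuity lets me absorb the substitution inside the kernel at cost $\phi(\varepsilon)+2\|g\|_\infty a_r/\varepsilon$, which for $\varepsilon=\sqrt{a_r}$ is of order $\Phi(r):=\phi(\sqrt{a_r})+\sqrt{a_r}+\beta_r$. When a gap of width $\ge 3r$ separates one index from the other three, the corresponding blocks of $(Z_j)$ are disjoint, so $\beta$-mixing decouples them up to $\beta_r$; and if that isolated index appears in only one of the two kernel factors, degeneracy makes the decoupled expectation vanish. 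Since every left index enters its kernel in the first slot and every right index in the second, the leftmost index can always be centred via $\int g(x,y)\,dF(x)=0$ and the rightmost via $\int g(x,y)\,dF(y)=0$. This yields $|\E[g(Y_{i_1},Y_{j_1})g(Y_{i_2},Y_{j_2})]|\le C\Phi(\alpha)$ and $\le C\Phi(\gamma)$, where $\alpha=|i_1-i_2|$ and $\gamma=|j_1-j_2|$, whenever the relevant extreme index is not shared between the two factors.

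With this estimate the summation becomes bookkeeping, and it is here that the delicate point lies. A partial-diagonal term, say $i_1=i_2$ and $j_1\ne j_2$, admits centring of the rightmost $j$, giving a bound $C\Phi(\gamma)$; summing over the free index and over $\gamma$ costs at most $Ck(n-k)\sum_\gamma\Phi(\gamma)=O(k(n-k))$. For a fully off-diagonal term both extreme indices are unshared, so the term is bounded by $C\min(\Phi(\alpha),\Phi(\gamma))=C\Phi(\max(\alpha,\gamma))$ (taking $a_k,\beta_k$ nonincreasing), and summing over positions and gaps gives
\[
C\,k(n-k)\sum_{\alpha,\gamma\ge 0}\Phi(\max(\alpha,\gamma))\le C\,k(n-k)\sum_{m\ge 0}(2m+1)\,\Phi(m),
\]
where the last series converges exactly because of the hypothesis $\sum_k k(\beta_k+\sqrt{a_k}+\phi(a_k))<\infty$ of \thref{Proposition_DFGW}. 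This is precisely where the extra factor $k$ in the summability condition is spent. Collecting the three groups yields $\E(\tilde U_n^{(g)}(k))^2\le Ck(n-k)$, and the identical computation over a rectangle $I\times J$ with all of $I$ preceding $J$ gives $\E(\sum_{i\in I}\sum_{j\in J}g(Y_i,Y_j))^2\le C|I|\,|J|$.

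For the increment bound I would reduce to this rectangular estimate. Splitting the sums at $m$ gives the algebraic identity
\[
\tilde U_n^{(g)}(k)-\tilde U_n^{(g)}(m)=\sum_{i=m+1}^{k}\sum_{j=k+1}^{n}g(Y_i,Y_j)-\sum_{i=1}^{m}\sum_{j=m+1}^{k}g(Y_i,Y_j),
\]
a difference of two double sums over rectangles each of whose shorter side has length at most $k-m$. Applying the rectangular bound to each, both second moments are at most $Cn(k-m)$, hence $\E|\tilde U_n^{(g)}(k)-\tilde U_n^{(g)}(m)|^2\le Cn(k-m)$, which after dividing by $n^3$ is the claimed $C(k-m)/n^2$; the argument is unchanged with $g$ replaced by the equally bounded, degenerate and $1$-continuous kernel $g_n$. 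The main obstacle is the covariance decoupling estimate of the second paragraph and, above all, the verification that the fully off-diagonal double sum collapses to $O(k(n-k))$ — isolating and centring the correct unshared extreme index in every configuration and matching the count against $\sum_k k(\beta_k+\sqrt{a_k}+\phi(a_k))<\infty$. Once that is secured, both displayed inequalities follow by routine counting.
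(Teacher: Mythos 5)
The paper does not actually prove this lemma: \thref{Lemma_DFGW} is imported verbatim from the literature (Lemmas 1 and 2 of \citet{Dehling.2015}, which in turn rest on the blocking and coupling technique of \citet{borovkova.2001}), so there is no internal proof to compare with, and your proposal is in effect a reconstruction of the argument of that reference. The architecture you describe is the right one: expand the square, bound the $k(n-k)$ diagonal terms by $\|g\|_\infty^2$, kill every configuration with an unshared extreme index by degeneracy plus a decoupling estimate, spend the factor $k$ in the summability hypothesis on the gap bookkeeping $\sum_m (2m+1)\Phi(m)<\infty$, and reduce the increment bound to two rectangles via
\[
\tilde U_n^{(g)}(k)-\tilde U_n^{(g)}(m)=\sum_{i=m+1}^{k}\sum_{j=k+1}^{n}g(Y_i,Y_j)-\sum_{i=1}^{m}\sum_{j=m+1}^{k}g(Y_i,Y_j),
\]
each of which has one side of length at most $k-m$, so that a rectangular version of the first bound (valid because all $i$'s precede all $j$'s, which preserves the extreme-index structure your argument needs) gives $\E|\tilde U_n^{(g)}(k)-\tilde U_n^{(g)}(m)|^2\le Cn(k-m)$.

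However, the step you yourself single out as the heart of the matter --- the decoupling covariance estimate --- is asserted rather than proved, and the mechanism you sketch for it would not go through as written. You propose to substitute $Y_t^{(r)}=\E(Y_t\mid\mathcal G_{t-r}^{t+r})$ into the kernel and control the substitution by $1$-continuity. But \thref{1-continuous} only bounds substitutions by a variable $Y_1'$ having \emph{the same distribution} as $Y_1$, and the conditional expectation $Y_t^{(r)}$ does not have the same law as $Y_t$ in general, so (\ref{1-cont_11}) and (\ref{1-cont_22}) cannot be invoked for that replacement. The correct mechanism, which is what \citet{borovkova.2001} use and what makes absolute regularity (rather than mere strong mixing) indispensable, is Berbee-type coupling across the gap: construct an independent copy of the far block of $(Z_j)$, whose induced variable $Y_t'$ has exactly the law of $Y_t$, is independent of the near block, and satisfies $\p(|Y_t-Y_t'|>2\varepsilon)\le 2a_r/\varepsilon+\beta_r$ by the NED property; only then does $1$-continuity legitimately yield the cost $\phi(\varepsilon)+C\left(a_r/\varepsilon+\beta_r\right)$, i.e.\ your $\Phi(r)$ with $\varepsilon=\sqrt{a_r}$. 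A second, smaller discrepancy: this balancing produces $\phi(\sqrt{a_r})$, whereas the hypothesis of \thref{Proposition_DFGW} controls $\sum_k k\,\phi(a_k)$, and since $\sqrt{a_k}\ge a_k$ the former is not implied by the latter for general $\phi$. In the present paper this is harmless, because $g$ and $g_n$ are $1$-continuous with $\phi(\varepsilon)=C\varepsilon$, so $\phi(\sqrt{a_k})=C\sqrt{a_k}$ is already covered by the $\sqrt{a_k}$ term and by (\ref{condition_appr.const_regu.coeff}), but a complete proof must say so. With the decoupling lemma established in this corrected form, your classification and counting of terms are sound and both displayed inequalities follow.
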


In our proofs we use the maximal inequality of \citet{Billingsley.1999}, which is valid for stationary/non-stationary and independent/dependent random variables $\xi_i$.

\begin{theorem}\thlabel{Billingsley}(Theorem 10.2, \citet{Billingsley.1999})
Let $\xi_1,\ldots,\xi_n$ be random variables and $S_k= \sum_{i=1}^{k}\xi_k$, $k\geq 1$, $S_0=0$ denotes the partial sum. Suppose that there exist $\alpha > 1$, $\beta>0$ and non-negative numbers $u_{n,1},\ldots,u_{n,n}$ such that
\begin{equation}\label{condition_prop_billingsley}
\p\bigg(\left|S_j-S_i\right|\geq \lambda\bigg) \leq \frac{1}{\lambda^{\beta}}\bigg(\sum_{l=i+1}^{j}u_{n,l}\bigg)^{\alpha},
\end{equation}
for $\lambda>0$, $0\leq i \leq j \leq n$. Then for all $\lambda>0$, $n\geq 2$,
\begin{equation}\label{result_prop_billingsley}
\p\left(\max_{1\leq  k \leq n}\left|S_k\right| \geq \lambda\right) \leq \frac{K}{\lambda^{\beta}}\bigg(\sum_{l=1}^{n}u_{n,l}\bigg)^{\alpha},
\end{equation}
where $K>0$ depends only on $\alpha$ and $\beta$. 
\end{theorem}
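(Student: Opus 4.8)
The plan is to prove the maximal inequality by strong induction on the block length, using a single \emph{sharp} bisection step rather than a full dyadic expansion. Throughout I write $G(i,j)=\sum_{i<l\le j}u_{n,l}$ (an additive set function) and $M(a,b)=\max_{a<k\le b}|S_k-S_a|$, so that, since $S_0=0$, the assertion to prove is $\p(M(0,n)\ge\lambda)\le K\lambda^{-\beta}G(0,n)^\alpha$ for a constant $K=K(\alpha,\beta)$. The induction hypothesis is exactly this bound for every subblock $(a,b]$ with $b-a<m$ and every $\lambda>0$; the base case $b-a=1$ is immediate from (\ref{condition_prop_billingsley}) with $i=a$, $j=a+1$ once $K\ge1$, and blocks of zero content are trivial because (\ref{condition_prop_billingsley}) then forces the relevant increment to vanish almost surely.

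For the inductive step I would split $(a,b]$ at its \emph{median-content} index, i.e. let $l_0$ be the smallest index with $G(a,l_0)>G(a,b)/2$. Writing $L=(a,l_0-1]$ and $R=(l_0,b]$, this choice guarantees $G(L)\le G(a,b)/2$ and $G(R)<G(a,b)/2$ whatever the size of the single weight $u_{n,l_0}$. The deterministic heart of the argument is the inequality
\[
M(a,b)\le\max\big(M(L),\ |S_{l_0}-S_a|+M(R)\big),
\]
obtained by treating the ranges $k\le l_0-1$ and $k\ge l_0$ separately and using $|S_k-S_a|\le|S_{l_0}-S_a|+|S_k-S_{l_0}|$ on the second range. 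A union bound with the \emph{asymmetric} split $\lambda=(1-\eta)\lambda+\eta\lambda$ then gives
\[
\p(M(a,b)\ge\lambda)\le\p(M(L)\ge\lambda)+\p(|S_{l_0}-S_a|\ge(1-\eta)\lambda)+\p(M(R)\ge\eta\lambda),
\]
where the junction increment $|S_{l_0}-S_a|$ is controlled directly by (\ref{condition_prop_billingsley}) over $(a,l_0]$, while the two maxima over the strictly shorter blocks $L,R$ are controlled by the induction hypothesis.

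Substituting these three bounds and using $G(L)^\alpha,G(R)^\alpha\le 2^{-\alpha}G(a,b)^\alpha$ together with $G(a,l_0)^\alpha\le G(a,b)^\alpha$ reduces the inductive step to the purely numerical requirement $K\,2^{-\alpha}\big(1+\eta^{-\beta}\big)+(1-\eta)^{-\beta}\le K$. This is where $\alpha>1$ enters: since $2^\alpha-1>1$, one can fix $\eta\in\big((2^\alpha-1)^{-1/\beta},1\big)$ so that $2^{-\alpha}(1+\eta^{-\beta})<1$, and then any $K\ge\max\{1,\ (1-\eta)^{-\beta}/(1-2^{-\alpha}(1+\eta^{-\beta}))\}$ — a constant depending only on $\alpha,\beta$ — closes the induction. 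Specializing to $(0,n]$ yields the claim.

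The step I expect to be the crux is arranging the recursion so that $K$ does \emph{not} degrade with $n$ (equivalently, with the recursion depth $\sim\log_2 n$). The naive bound $M(a,b)\le M(L)+M(R)$ with a proportional split of $\lambda$ forces a relation like $\alpha\ge1+\beta$ and fails under the stated hypotheses, and any estimate that decomposes the maximum additively across dyadic scales over-counts a single dominant increment $\sim\log_2 n$ times. The remedy is exactly the asymmetric arrangement above: the recursive branch $M(L)$ is kept at the \emph{full} level $\lambda$ yet carries only the reduced content factor $2^{-\alpha}<\tfrac12$, and this slack — available precisely when $\alpha>1$ — absorbs the cost $\eta^{-\beta}$ of peeling off the junction increment, so that one fixed $\eta$ and one fixed $K$ serve at every level of the recursion.
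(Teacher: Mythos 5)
Your proof is correct, and note that the paper itself contains no proof of this statement: it is imported verbatim as Theorem 10.2 of \citet{Billingsley.1999}, so the only meaningful comparison is with Billingsley's own argument. I checked your three key steps. The deterministic inequality $M(a,b)\le\max\big(M(L),\,|S_{l_0}-S_a|+M(R)\big)$ holds by treating $k\le l_0-1$ and $k\ge l_0$ separately; the median-content choice of $l_0$ gives $G(L)\le G(a,b)/2$ and $G(R)<G(a,b)/2$, while the junction content is only bounded by $G(a,l_0)\le G(a,b)$, which is harmless because that term enters without the factor $K$; and the recursion closes because the requirement $K2^{-\alpha}(1+\eta^{-\beta})+(1-\eta)^{-\beta}\le K$ is satisfiable: $\alpha>1$ makes the interval $\big((2^{\alpha}-1)^{-1/\beta},1\big)$ nonempty, any $\eta$ in it gives $2^{-\alpha}(1+\eta^{-\beta})<1$, and then $K=\max\big\{1,\,(1-\eta)^{-\beta}/(1-2^{-\alpha}(1+\eta^{-\beta}))\big\}$ depends only on $\alpha$ and $\beta$, so the constant does not degrade with recursion depth --- the point you correctly identified as the crux. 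Your route is genuinely different from Billingsley's: he first proves a maximal inequality for $\max_{k}\min(|S_k|,|S_n-S_k|)$ under a condition on the joint law of two adjacent increments, deduces that joint condition from the marginal hypothesis (\ref{condition_prop_billingsley}) via $\p(A\cap B)\le(\p(A)\p(B))^{1/2}$, and then passes to $\max_k|S_k|$ through the elementary bound $\max_k|S_k|\le\max_k\min(|S_k|,|S_n-S_k|)+|S_n|$. Your direct strong induction on block length, with the asymmetric threshold split $\lambda=(1-\eta)\lambda+\eta\lambda$ that keeps the left branch at full level, is self-contained, avoids both the $\min$-statistic and the Cauchy--Schwarz step, and makes explicit where $\alpha>1$ enters; Billingsley's detour buys, along the way, the companion inequality for the $\min$-statistic, which is what one typically needs for tightness of partial-sum processes and which your argument does not produce. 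Two cosmetic points: state the convention that a maximum over an empty index set is $0$ (needed when $l_0=a+1$ or $l_0=b$), and formulate the induction hypothesis explicitly as holding uniformly in $\lambda>0$ with one constant $K$ for all shorter blocks, as your argument implicitly requires.
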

By the Markov inequality, (\ref{condition_prop_billingsley}) is satisfied if
\begin{equation*}
\E\left|S_j-S_i\right|^{\beta} \leq \bigg(\sum_{l=i+1}^{j}u_{n,l}\bigg)^{\alpha}.
\end{equation*}

In the proof of  \thref{hajek_renyi_inequality} we use a H\'{a}jek-R\'{e}nyi type inequality established by \citet{Kokoszka.2000}.
\begin{theorem}\thlabel{theorem_KL}(Theorem 4.1, \citet{Kokoszka.2000})
 Let $X_1,..., X_n$ be any random variables with finite second moments and $c_1, ..., c_n$ be any non-negative constants. Then
\begin{align}
\epsilon^2\p\bigg(\max_{m\leq k\leq n}c_k\bigg|\sum_{i=1}^{k}X_i\bigg|>\epsilon\bigg) &\leq c_m^2\sum_{i,j=1}^m\E\left(X_iX_j\right) + \sum_{k=m}^{n-1}\left|c_{k+1}^2-c_k^2\right|\sum_{i,j=1}^k\E\left(X_iX_j\right)\nonumber\\
&+ 2\sum_{k=m}^{n-1}c_{k+1}^2\E\bigg(\left|X_{k+1}\right|\bigg|\sum_{j=1}^{k}X_j\bigg|\bigg) + \sum_{k=m}^{n-1}c_{k+1}^2\E X_{k+1}^2.\label{inequality_hr}
\end{align}
\end{theorem}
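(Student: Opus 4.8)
The plan is to reduce everything to an elementary Abel-summation (telescoping) identity for $c_k^2S_k^2$, where $S_k=\sum_{i=1}^{k}X_i$, and then apply Markov's inequality. It is worth stressing at the outset that no independence, martingale, or mixing structure is assumed on the $X_i$, so the argument must be purely algebraic together with the finite-second-moment hypothesis. Since $\{\max_{m\le k\le n}c_k|S_k|>\epsilon\}=\{\max_{m\le k\le n}c_k^2S_k^2>\epsilon^2\}$, Markov's inequality gives $\epsilon^2\,\p(\max_{m\le k\le n}c_k|S_k|>\epsilon)\le\E\big(\max_{m\le k\le n}c_k^2S_k^2\big)$, so it suffices to bound the expected maximum, and for that it is enough to bound $c_k^2S_k^2$ pathwise by a quantity that does not depend on $k$.

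First I would write, for each $k\ge m$, the telescoping identity
\begin{equation*}
c_k^2S_k^2=c_m^2S_m^2+\sum_{j=m}^{k-1}\big(c_{j+1}^2S_{j+1}^2-c_j^2S_j^2\big),
\end{equation*}
and expand each increment by adding and subtracting $c_{j+1}^2S_j^2$ and using $S_{j+1}^2-S_j^2=2S_jX_{j+1}+X_{j+1}^2$:
\begin{equation*}
c_{j+1}^2S_{j+1}^2-c_j^2S_j^2=\big(c_{j+1}^2-c_j^2\big)S_j^2+c_{j+1}^2\big(2S_jX_{j+1}+X_{j+1}^2\big).
\end{equation*}
Next I would make the right-hand side free of $k$. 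Since $S_j^2\ge0$ and $X_{j+1}^2\ge0$, while $2S_jX_{j+1}\le 2|S_j||X_{j+1}|$, replacing $c_{j+1}^2-c_j^2$ by $|c_{j+1}^2-c_j^2|$ and extending every summation range from $k-1$ up to $n-1$ only adds non-negative terms. This yields, simultaneously for all $m\le k\le n$,
\begin{equation*}
c_k^2S_k^2\le c_m^2S_m^2+\sum_{j=m}^{n-1}\big|c_{j+1}^2-c_j^2\big|\,S_j^2+\sum_{j=m}^{n-1}c_{j+1}^2\big(2|S_j||X_{j+1}|+X_{j+1}^2\big),
\end{equation*}
and hence the same bound for $\max_{m\le k\le n}c_k^2S_k^2$. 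Taking expectations and recognising $\E S_m^2=\sum_{i,j=1}^{m}\E(X_iX_j)$, $\E S_j^2=\sum_{i,l=1}^{j}\E(X_iX_l)$, $\E\big(|S_j||X_{j+1}|\big)=\E\big(|X_{j+1}|\,\big|\sum_{l=1}^{j}X_l\big|\big)$, and $\E X_{j+1}^2$, then renaming the dummy index $j$ to $k$, delivers exactly (\ref{inequality_hr}).

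The only delicate point I expect is the bookkeeping of signs: both the cross term $2S_jX_{j+1}$ and the factor $c_{j+1}^2-c_j^2$ may be negative, so I must pass to absolute values \emph{before} extending the summation ranges, and I should check the degenerate case $k=m$, where the telescoping sum is empty and the bound is trivially $c_m^2S_m^2$. An equivalent and more classical route is the Hájek–Rényi first-passage decomposition: letting $A_k$ be the event that $k$ is the first index in $[m,n]$ with $c_k|S_k|>\epsilon$, one has $\epsilon^2\,\p(\max_{m\le k\le n}c_k|S_k|>\epsilon)\le\sum_{k=m}^{n}\E\big(c_k^2S_k^2\,1_{A_k}\big)$; inserting the same telescoping identity, interchanging the order of summation, and bounding each indicator crudely by $1$ after passing to absolute values reproduces the identical four terms. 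Either way the inequality is a consequence of the algebraic identity alone and needs nothing beyond finiteness of the second moments, which is precisely why it applies to the dependent, non-stationary increments $R_i$ arising in \thref{hajek_renyi_g}.
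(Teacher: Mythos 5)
Your proof is correct. Note, however, that the paper never proves this statement: it is quoted verbatim as Theorem 4.1 of \citet{Kokoszka.2000} in Section~\ref{result_literature}, which collects auxiliary results from the literature, so there is no in-paper proof to compare against --- you have supplied an argument where the paper offers only a citation. The argument you give is the standard elementary route to H\'{a}jek--R\'{e}nyi-type bounds, and it is sound at every step: the increment identity
\begin{equation*}
c_{j+1}^2S_{j+1}^2-c_j^2S_j^2=\bigl(c_{j+1}^2-c_j^2\bigr)S_j^2+c_{j+1}^2\bigl(2S_jX_{j+1}+X_{j+1}^2\bigr)
\end{equation*}
is exact; taking absolute values \emph{before} extending the summation range to $n-1$ produces a $k$-free pathwise majorant of $c_k^2S_k^2$, hence of $\max_{m\le k\le n}c_k^2S_k^2=\bigl(\max_{m\le k\le n}c_k|S_k|\bigr)^2$ (the two maxima coincide because all quantities are non-negative); Markov's inequality and the identifications $\E S_k^2=\sum_{i,j=1}^k\E(X_iX_j)$, $\E\bigl(|S_k||X_{k+1}|\bigr)=\E\bigl(|X_{k+1}|\bigl|\sum_{j=1}^kX_j\bigr|\bigr)$ then give precisely the four terms of (\ref{inequality_hr}), all finite by Cauchy--Schwarz under the second-moment hypothesis. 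Your closing observation is also the operative one for this paper: since the bound uses no independence, stationarity, martingale or mixing structure, it legitimately applies to the dependent, non-stationary increments $R_i$ and $\tilde{R}_i$ in \thref{hajek_renyi_g} and to the bounded $L_1$ NED variables in \thref{hajek_renyi_inequality}, which is exactly how the paper invokes it. The first-passage decomposition you sketch at the end is a valid alternative but is redundant given that the direct maximal bound already closes the proof.
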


\section*{Acknowledgement}
The author would like to thank Herold Dehling, Liudas Giraitis and Isabel Garcia for valuable discussions. The research was supported by the Collaborative Research Centre 823 \emph{Statistical modelling of nonlinear dynamic processes} and the Konrad-Adenauer-Stiftung.

\

\footnotesize


\begin{thebibliography} {998}

\bibitem [Antoch \textit{et al.}(1995)] {Antoch.1995} Antoch, J., Hu{\v{s}}kov{\'a}, M. and Veraverbeke, N. (1995). Change-point problem and bootstrap. \emph{J. Nonparametr. Stat.} \textbf{5}, 123-144.
	
\bibitem [Bai(1994)] {Bai.1993} Bai, J. (1994). Least squares estimation of a shift in linear processes. \emph{J. Time Series Anal.} \textbf{15}, 453-472.

\bibitem [Billingsley(1999)] {Billingsley.1999} Billingsley, P. (1999). \emph{Convergence of Probability Measures}, 2nd ed. Wiley, New York.

\bibitem [Borovkova \textit{et al.}(2001)] {borovkova.2001} Borovkova, S., Burton, R. and Dehling, H. (2001). Limit theorems for functionals of mixing processes with applications to {U}-statistics and dimension estimation. \emph{Trans. Amer. Math. Soc.} \textbf{353}, 4261-4318.

\bibitem [Bradley(2002)] {Bradley.2002} Bradley, R.C. (2002). \emph{Introduction to Strong Mixing Conditions}. Kendrick Press, Heber City.

\bibitem [Cs{\"o}rg{\"o} and Horv{\'a}th(1997)] {Csorgo.1997} Cs{\"o}rg{\"o}, M. and Horv{\'a}th, L. (1997). \emph{Limit Theorems in Change-Point Analysis}. Wiley, New York.

\bibitem [Dehling \textit{et al.}(2015)] {Dehling.2015} Dehling, H., Fried, R., Garcia Arboleda, I. and Wendler, M. (2015). Change-point detection under dependence based on two-sample U-statistics. In: Dawson, D., Kulik, R., Jaye, M. O., Szyszkowicz, B., Zhao, Y.(Eds.) \emph{Asymptotic laws and methods in stochastics. Fields Institute Communication} \textbf{76}, 195-220.

\bibitem [Dehling \textit{et al.}(2013)] {Dehling.2013b} Dehling, H., Rooch, A. and Taqqu, M. S. (2013). Non-parametric change-point tests for long-range dependent data. \emph{Scand. J. Stat.} \textbf{40}, 153-173.

\bibitem [Giraitis \textit{et al.}(1996)] {Giraitis.1996} Giraitis, L., Leipus, R. and Surgailis, D. (1996). The change-point problem for dependent observations. \emph{J. Statist. Plann. Inference} \textbf{53}, 297-310.

\bibitem [Hansen(1991)] {Hansen.1991} Hansen, B. E. (1991). {GARCH}(1,1) processes are near epoch dependent. \emph{Econom. Lett.} \textbf{36}, 181-186.

\bibitem [Hoeffding(1948)] {Hoeffding.1948} Hoeffding, W. (1948). A class of statistics with asymptotically normal distribution. \emph{Ann. Math. Stat.} \textbf{19}, 293-325.

\bibitem [Horv{\'a}th and Kokoszka(1997)] {Horvath.1997} Horv{\'a}th, L. and Kokoszka, P. (1997). The effect of long-range dependence on change-point estimators. \emph{J. Statist. Plann. Inference} \textbf{64}, 57-81.

\bibitem [Kokoszka and Leipus(1998)] {Kokoszka.1998} Kokoszka, P. and Leipus, R. (1998). Change-point in the mean of dependent observations. \emph{Statist. Probab. Lett.} \textbf{40}, 385-393.

\bibitem [Kokoszka and Leipus(2000)] {Kokoszka.2000} Kokoszka, P. and Leipus, R. (2000). Change-point estimation in {ARCH} models. \emph{Bernoulli} \textbf{6}, 513-539.

\bibitem [Ling(2007)] {Ling.2007} Ling, S. (2007). Testing for change points in time series models and limiting theorems for {NED} sequences. \emph{Ann. Statist.} \textbf{35}, 1213-1237.

\bibitem [Wooldridge and White(1988)] {wooldridge.1988} Wooldridge, J. M. and White, H. (1988). Some invariance principles and central limit theorems for dependent heterogeneous processes. \emph{Econometric Theory} \textbf{4}, 210--230.

\end{thebibliography}
\end{document}